\documentclass[10pt,reqno,final]{amsart}
\usepackage{lineno,hyperref}
\usepackage{epsfig,amssymb,amsmath,version,amsthm}
\usepackage{amssymb,version,graphicx,fancybox,mathrsfs,multirow}
\usepackage[notcite,notref]{showkeys}
\usepackage{url,hyperref}
\usepackage{subfigure,epstopdf,bm}
\usepackage{color}
\usepackage{appendix}
\usepackage{algorithm}
\usepackage{algorithmicx}
\usepackage{algpseudocode}
\usepackage{amsmath}
\usepackage{amsfonts}
\usepackage{leftidx}
\usepackage{amssymb}
\usepackage{graphicx}%

 \headheight=3pt \topmargin=0.3cm \textheight=8.4in \textwidth=5.6in
 \setlength{\oddsidemargin}{1cm} \setlength{\evensidemargin}{1cm}
 \addtolength{\voffset}{-5pt}

 %---- Basic definition ----
 \catcode`\@=11 \theoremstyle{plain}
 \@addtoreset{equation}{section}   % Makes \section reset 'equation' counter.
 
\@addtoreset{figure}{section}
\renewcommand\thefigure{\thesection.\@arabic\c@figure}
\@addtoreset{table}{section}
\renewcommand\thetable{\thesection.\@arabic\c@table}
 %\@addtoreset{theorem}{section}
 \newtheorem{thm}{\bf Theorem}

 \newenvironment{theorem}{\begin{thm}} {\end{thm}}
 \newtheorem{cor}{\bf Corollary}
 \newtheorem{prop}{Proposition}[section]

 \newtheorem{lmm}{\bf Lemma}
 
 \newenvironment{lemma}{\begin{lmm}}{\end{lmm}}
 \theoremstyle{remark}
 \newtheorem{rem}{Remark}[section]

 \def \ri {{\rm i}}
 \def \widebar {\accentset{{\cc@style\underline{\mskip10mu}}}}
 \newcommand{\bs}[1]{\boldsymbol{#1}}

 \DeclareSymbolFont{ugmL}{OMX}{mdugm}{m}{n}
 \SetSymbolFont{ugmL}{bold}{OMX}{mdugm}{b}{n}
 \DeclareMathAccent{\wideparen}{\mathord}{ugmL}{"F3}

 \renewcommand \wedge \times

\begin{document}
\graphicspath{{./figs/}}
%{\title{Exponential convergence of the fast multipole method for 3-dimensional Laplace equation in layered media}}
{\title[ME, LE, AND M2L FOR CHARGES IN LAYERED MEDIA]{Exponential convergence for multipole and local expansions and their  translations for sources in layered media: three-dimensional Laplace equation}}

\author{Bo Wang}
\address{LCSM(MOE), School of Mathematics and Statistics, Hunan Normal University, Changsha, Hunan, 410081, P. R. China.}
\email{bowang@hunnu.edu.cn}

%
% If there is another author uncomment and edit the following.
%

\author{Wen Zhong Zhang}
\address{Department of Mathematics, Southern Methodist University, Dallas, TX 75275, USA.}
\email{wenzhongz@smu.edu}

\author{Wei Cai}
\address{Department of Mathematics, Southern Methodist University, Dallas, TX 75275, USA.}
\email{cai@smu.edu}

\maketitle

\begin{abstract}
	In this paper, we prove the exponential convergence of the multipole and local expansions, shifting and translation operators used in fast multipole methods (FMMs) for 3-dimensional Laplace equations in layered media. These theoretical results ensure the exponential convergence of the FMM which has been shown by the numerical results recently reported in \cite{wang2019fastlaplace}. As the free space components are calculated by the classic FMM, this paper will focus on the analysis for the reaction components of the Green's function for the Laplace equation in layered media. We first prove that the density functions in the integral representations of the reaction components are analytic and bounded in the right half complex plane. Then, using the Cagniard-de Hoop transform and contour deformations,  estimate for the remainder terms of the truncated expansions is given, and, as a result, the exponential convergence for the expansions and translation operators is proven.
\end{abstract}

%\begin{keyword}
%	Fast multipole method, layered media,  Laplace equation, spherical harmonic expansion
%	%\MSC[2010] 00-01\sep  99-00
%\end{keyword}

\section{Introduction}
%The fast multiple method (FMM) has been a revolutionary development in modern
%computational algorithms for calculating many body interactions, in fact it was
%considered as one of the top 10 algorithms in the last 20th century \cite{cipra2000best}. The FMM can reduce the $O(N^{2})$ cost of computing long-range interactions
%(Columbic electrostatics, wave scattering) among $N$ particles to $O(N)$ or
%$O(N\log N)$, such a capability of the FMM has had a tremendous impact on modern computational biology,
%astrology, and computational acoustics and electromagnetics, among many other
%applications in sciences and engineering. The original FMM developed by
%Greengard and Rohklin \cite{greengard1987fast,greengard1997new} for particles in free spaces is based on multipole
%expansion (ME) for the Green's function\ $G(\bs r,\bs r^{\prime}%
%)=1/|\bs r-\bs r^{\prime}|$. The ME exhibits exponential convergence, thus only a small number of terms are needed for very good accuracy, resulting in a low-rank approximation for the far field.

The well-known fast multipole method (FMM) proposed by
Greengard and Rohklin \cite{greengard1987fast,greengard1997new} for particles in free spaces has been a revolutionary development for scientific and engineering computing. The algorithm was based on low rank approximations for the far field of sources, which are obtained by using truncated multipole expansions (MEs) and local expansions (LEs) with a truncation number $p$. The capability of using a small number $p$ to achieve high accuracy is due to the exponential convergence of the MEs and LEs, as well as the shifting and translation operators for multipole to multipole (M2M), local to local (L2L), and multipole to local (M2L) conversions.
Recently, we have extended the FMMs of the Helmholtz, Laplace and Poisson-Botzmann equations from free space to layered media (cf. \cite{wang2019fast, zhang2020exponential, wang2019fastlaplace, wang2020fast}). The new FMMs significantly enlarge the application area of the classic ones. Many important applications, e.g. parasitic parameter extraction of very large-scale integrated (VLSI) circuits \cite{oh1994capacitance, zhao1998efficient}, complex scattering problem in meta-materials \cite{chen2018accurate}, electrical potential computation in ion channel simulation \cite{lin2014accuracy}, etc, can be solved more efficiently and accurately by using the FMMs for layered media.

The mathematical proof for the exponential convergence of the MEs and LEs and the corresponding translation operators is one of the key issues in
developing FMMs for the aforementioned equations in layered media.
%The key formulas in the FMMs are the MEs, LEs and corresponding shifting and translation operators we derived for the reaction components of the Green's functions in layered media.
As the reaction components of the Green's functions in layered media do not have a closed form in the physical space, Sommerfeld-type integral representations and extended Funck-Hecke formula are used to derive the MEs, LEs and translation operators for the FMMs mentioned above. The distinct feature of the expansions and translation operators for reaction components is that they involve Sommerfeld-type integrals with integrands depending on the layered structure of the media. Hence, the main difficulty in the convergence analysis is how to give a delicate estimate on the Sommerfeld-type integrals. Recently, we have proved the exponential convergence for the 2-dimensional Helmholtz equation case \cite{zhang2020exponential} and numerically showed that the MEs in 3-dimensional cases also have exponential convergence similarly as in 2-dimensional cases. However, the theoretical proof for 3-dimensional cases are much more involved technically due to the double improper integrals induced by the 2-dimensional inverse Fourier transform used in the derivation of the 3-dimensional layered Green's function while only 1-dimensional inverse Fourier transform is needed in 2-dimensional cases.

In this paper, we will continue our previous work on 2-D Helmholtz equation \cite{zhang2020exponential} and prove the exponential convergence of the MEs, LEs and corresponding translation operators for the Green's function of 3-dimensional Laplace equation in layered media. First, we consider the direct MEs for reaction components. We prove that they have exponential convergence but with convergence rates depend on polarization distances as defined in \cite{wang2019fast} and then used by \cite{wang2019fastlaplace, wang2020fast}. Therefore, the concept of equivalent polarization sources  is crucial for the development of the FMMs for reaction components. By introducing the equivalent polarization sources, the reaction components have been reformulated and the MEs, LEs and translation operators are re-derived according to the new formulations. In this paper, we further give theoretical proof for their exponential convergence and show that the convergence rates are determined by the Euclidean distance between the targets and corresponding equivalent polarization sources. As a result, we validate the idea of using the re-derived MEs, LEs and translation operators with equivalent polarization sources in the FMMs for the reaction components. All theoretical results proved in this paper show that the FMM for Laplace equation in layered media developed in \cite{wang2019fastlaplace} is a highly accurate and error controllable algorithm as same as the free space FMM.

The rest of the paper is organized as follows. In section 2, we review the integral representation of the Green's function of Laplace equation in layered media and a recursive algorithm for a stable and efficient calculation of the reaction densities of general multi-layered media. Based on the recursive formulas, we prove that the reaction densities are bounded and analytic in the right half complex plane, which is important for the estimate of the Sommerfeld-type integrals. Section 3 will review the derivation of MEs, LEs, shifting and translation operators for layered Green's function. In section 4, we first review the exponential convergence of the ME, LE, shifting and translation operators for the free space components of layered Green's function. Then, proofs for the exponential convergence of the MEs, LEs and translation operators for the reaction components are presented. Finally, a conclusion is given in Section 5.

\section{Green's function of 3-dimensional Laplace equation in layered media}
Consider a layered medium consisting of $L$-interfaces located at $z=d_{\ell
},\ell=0,1,\cdots,L-1$, see Fig. \ref{layerstructure}. The piece wise constant material parameter is described by $\{\varepsilon_{\ell}\}_{\ell=0}^L$.
Suppose we have a point
source at $\boldsymbol{r}^{\prime}=(x^{\prime},y^{\prime},z^{\prime})$ in the
$\ell^{\prime}$th layer ($d_{\ell^{\prime}}<z^{\prime}<d_{\ell^{\prime}-1}$), then, the layered media Green's function $u_{\ell\ell'}(\bs r, \bs r')$ for the Laplace equation satisfies
\begin{equation}\label{Laplaceeqlayer}
\boldsymbol{\Delta}u_{\ell\ell'}(\boldsymbol{r},\boldsymbol{r}^{\prime
})=-\delta(\boldsymbol{r},\boldsymbol{r}^{\prime}),
\end{equation}
at field point $\boldsymbol{r}=(x,y,z)$ in the $\ell$th layer ($d_{\ell
}<z<d_{\ell}-1$) where $\delta(\boldsymbol{r},\boldsymbol{r}^{\prime})$ is the
Dirac delta function.
By using Fourier transforms along $x-$ and $y-$directions, the problem can be solved analytically for each layer in $z$ by imposing
transmission conditions at the interface between $\ell$th and $(\ell-1)$th
layer ($z=d_{\ell-1})$, \textit{i.e.},
\begin{equation}\label{transmissioncond}
u_{\ell-1,\ell'}(x,y,z)=u_{\ell\ell'}(x,y,z),\quad \varepsilon_{\ell-1}\frac{\partial  u_{\ell-1,\ell'}(x,y,z)}{\partial z}=\varepsilon_{\ell}\frac{\partial \widehat u_{\ell\ell'}(k_{x},k_{y},z)}{\partial z},
\end{equation}
as well as the decaying conditions in the top and bottom-most layers as
$z\rightarrow\pm\infty$.
\begin{figure}[ht!]\label{layerstructure}
	\centering
	\includegraphics[scale=0.7]{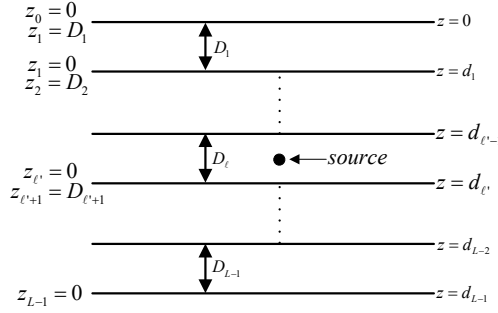}
	\caption{Sketch of the layer structure for general multi-layer media.}
\end{figure}

By applying Fourier transform in $x, y$ direction and solving the resulted ODE with interface conditions, we can obtain the expression of the Green's function in the physical domain as (cf. \cite{wang2019fastlaplace} Appendix B.)
\begin{equation}\label{layeredGreensfun}
u_{\ell\ell^{\prime}}(\boldsymbol{r},\boldsymbol{r}^{\prime})=%
\begin{cases}
\displaystyle u_{0\ell^{\prime}}^{11}(\boldsymbol{r},\boldsymbol{r}^{\prime})+u_{0\ell^{\prime}}^{12}%
(\boldsymbol{r},\boldsymbol{r}^{\prime}),\\
\displaystyle u_{\ell\ell^{\prime}}^{11}%
(\boldsymbol{r},\boldsymbol{r}^{\prime})+u_{\ell\ell^{\prime}}^{12}%
(\boldsymbol{r},\boldsymbol{r}^{\prime})+u_{\ell\ell^{\prime}}^{21
}(\boldsymbol{r},\boldsymbol{r}^{\prime})+u_{\ell\ell^{\prime}}^{22
}(\boldsymbol{r},\boldsymbol{r}^{\prime}), & \ell\neq\ell^{\prime},\\
\displaystyle u_{\ell\ell^{\prime}}^{11}%
(\boldsymbol{r},\boldsymbol{r}^{\prime})+u_{\ell\ell^{\prime}}^{12}%
(\boldsymbol{r},\boldsymbol{r}^{\prime})+u_{\ell\ell^{\prime}}^{21
}(\boldsymbol{r},\boldsymbol{r}^{\prime})+u_{\ell\ell^{\prime}}^{22
}(\boldsymbol{r},\boldsymbol{r}^{\prime})+\frac{1}{4\pi
	|\bs r-\bs r^{\prime}|}, &
\ell=\ell^{\prime},\\
\displaystyle u_{L\ell^{\prime}}^{21}%
(\boldsymbol{r},\boldsymbol{r}^{\prime})+u_{L\ell^{\prime}}^{22}%
(\boldsymbol{r},\boldsymbol{r}^{\prime}),
\end{cases}
\end{equation}
with reaction components given by Sommerfeld-type integrals:
\begin{equation}\label{generalcomponents}
u_{\ell\ell'}^{\mathfrak{ab}}(\bs r, \bs r')=\frac{1}{8\pi^2 }\int_{-\infty}^{\infty}\int_{-\infty}^{\infty}\frac{1}{k_{\rho}}e^{\ri \bs k\cdot\bs\tau_{\ell\ell'}^{\mathfrak{ab}}(\bs r, \bs r')}\sigma_{\ell\ell'}^{\mathfrak{ab}}(k_{\rho})dk_x dk_y, \quad\mathfrak{a}, \mathfrak{b}=1, 2,
\end{equation}
where $k_{\rho}=\sqrt{k_x^2+k_y^2}$, $\bs k=(k_x, k_y, \ri k_{\rho})$,
\begin{equation}\label{coordmapping}
\begin{split}
\bs\tau_{\ell\ell'}^{11}(\bs r, \bs r')=&(x-x', y-y', z-d_{\ell}+z'-d_{\ell'}),\\
\bs\tau_{\ell\ell'}^{12}(\bs r, \bs r')=&(x-x', y-y', z-d_{\ell}+d_{\ell'-1}-z'),\\
\bs\tau_{\ell\ell'}^{21}(\bs r, \bs r')=&(x-x', y-y', d_{\ell-1}-z+z'-d_{\ell'}),\\
\bs\tau_{\ell\ell'}^{22}(\bs r, \bs r')=&(x-x', y-y', d_{\ell-1}-z+d_{\ell'-1}-z'),
\end{split}
\end{equation}
are coordinate mappings depend on interfaces and $\sigma_{\ell\ell'}^{\mathfrak{ab}}(k_{\rho})$ are the reaction densities in Fourier spectral space. The expression \eqref{layeredGreensfun} is a general formula for source $\bs r'$ in the middle layer. In the cases of the source $\bs r'$ in the top or bottom most layer, the reaction components $\{u_{\ell\ell'}^{\mathfrak a2}(\bs r, \bs r')\}_{\mathfrak a=1}^2$ and $\{u_{\ell\ell'}^{\mathfrak a1}(\bs r, \bs r')\}_{\mathfrak a=1}^2$ will vanish, respectively.

The reaction densities $\sigma_{\ell\ell'}^{\mathfrak{ab}}(k_{\rho})$ only depend on the layered structure and the material parameter $\varepsilon_{\ell}$ in each layer. According to the derivation in \cite[Appendix B]{wang2019fastlaplace}, a stable recurrence formula is available for more  general interface conditions
\begin{equation}
\label{generalinterfacecond}
a_{\ell-1}u_{\ell-1,\ell'}(x,y,z)=a_{\ell}u_{\ell\ell'}(x,y,z),\quad b_{\ell-1}\frac{\partial  u_{\ell-1,\ell'}(x,y,z)}{\partial z}=b_{\ell}\frac{\partial \widehat u_{\ell\ell'}(k_{x},k_{y},z)}{\partial z},
\end{equation}
where $\{a_{\ell}, b_{\ell}\}$ are given constants. In order to prove some key properties of the densities, we will review the recurrence formula. For this purpose, let us define
\begin{equation}\label{expdef}
\begin{split}
&d_{-1}:=d_0,\quad d_{L+1}:=d_{L},\quad D_{\ell}:=d_{\ell-1}-d_{\ell},\quad e_{\ell}:=e^{- k_{\rho}D_{\ell}},\quad \ell=0, 1, \cdots, L,\\
&\gamma_{\ell}^+=\frac{a_{\ell}}{a_{\ell-1}}+\frac{b_{\ell}}{b_{\ell-1}},\quad \gamma_{\ell}^-=\frac{a_{\ell}}{a_{\ell-1}}-\frac{b_{\ell}}{b_{\ell-1}},\quad C^{(\ell)}=\prod\limits_{j=0}^{\ell-1}\frac{1}{2e_j},\quad \ell=1, 2, \cdots, L,
\end{split}
\end{equation}
and matrices
\begin{equation}\label{transmissionmat}
\begin{split}
&\mathbb T^{\ell-1,\ell}:=\begin{pmatrix}
T_{11}^{\ell-1,\ell} & T_{12}^{\ell-1,\ell}\\
T_{21}^{\ell-1,\ell} & T_{22}^{\ell-1,\ell}
\end{pmatrix}=\frac{1}{2e_{\ell-1}}\begin{pmatrix}
e_{\ell-1} & 0\\
0 & 1
\end{pmatrix}
\begin{pmatrix}
\displaystyle \gamma_{\ell}^+ & \displaystyle\gamma_{\ell}^-\\[7pt]
\displaystyle \gamma_{\ell}^- & \displaystyle\gamma_{\ell}^+
\end{pmatrix}\begin{pmatrix}
e_{\ell} & 0\\
0 & 1
\end{pmatrix}=\frac{1}{2e_{\ell-1}}\widetilde{\mathbb T}^{\ell-1,\ell},\\
&\breve{\mathbb S}^{(\ell)}:=\begin{pmatrix}
\breve S_{11}^{(\ell)} & \breve S_{12}^{(\ell)}\\
\breve S_{21}^{(\ell)} & \breve S_{22}^{(\ell)}
\end{pmatrix}=\frac{1}{2}\begin{pmatrix}
\displaystyle\frac{1}{a_{\ell}} & \displaystyle\frac{1}{b_{\ell}}\\[7pt]
\displaystyle\frac{1}{a_{\ell}e_{\ell}} & \displaystyle-\frac{1}{b_{\ell}e_{\ell}}
\end{pmatrix},\quad \mathbb A^{(\ell)}:=\begin{pmatrix}
\alpha_{11}^{(\ell)} & \alpha_{12}^{(\ell)}\\[7pt]
\alpha_{21}^{(\ell)} & \alpha_{22}^{(\ell)}
\end{pmatrix}=\widetilde{\mathbb T}^{01}\cdots \widetilde{\mathbb T}^{\ell-1,\ell}.
\end{split}
\end{equation}
Then, the recursive algorithm is summarized as follow:
\begin{algorithm}\label{algorithm2}
	\caption{Stable and efficient algorithm for reaction densities $\sigma_{\ell\ell'}^{\mathfrak{ab}}(k_{\rho})$ in \eqref{generalcomponents}.}
	\begin{algorithmic}
		\For{$\ell'=0 \to L$}
		\If{$\ell'<L$}
		\begin{equation}\label{bottommostdensity1}
		\begin{split}
		&\sigma_{L\ell'}^{21}(k_{\rho})=-\frac{C^{(\ell'+1)}}{C^{(L)}\alpha_{22}^{(L)}}\begin{pmatrix}
		\alpha^{(\ell')}_{21} & \alpha^{(\ell')}_{22}
		\end{pmatrix}2e_{\ell'}\breve{\mathbb S}^{(\ell')}\begin{pmatrix}
		-a_{\ell'}\\
		b_{\ell'}
		\end{pmatrix},
		\end{split}
		\end{equation}
		\EndIf
		\If{$\ell'>0$}
		\begin{equation}\label{bottommostdensity2}
		\sigma_{L\ell'}^{22}(k_{\rho})=-\frac{C^{(\ell')}}{C^{(L)}\alpha_{22}^{(L)}}\begin{pmatrix}
		\alpha^{(\ell'-1)}_{21} & \alpha^{(\ell'-1)}_{22}
		\end{pmatrix}2e_{\ell'-1}\breve{\mathbb S}^{(\ell'-1)}\begin{pmatrix}
		a_{\ell'}\\
		b_{\ell'}
		\end{pmatrix},
		\end{equation}
		\EndIf
		\For{$\ell= L-1 \to 0$ }
		\If{$\ell=\ell'$}
		\begin{equation}\label{middledensity}
		\sigma_{\ell\ell'}^{11}(k_{\rho})=T^{\ell'\ell'+1}_{11}\sigma_{\ell'+1,\ell'}^{11}+T^{\ell'\ell'+1}_{12}\sigma_{\ell'+1,\ell'}^{21}-\breve S_{11}^{(\ell')}a_{\ell'}+\breve S_{12}^{(\ell')}b_{\ell'},
		\end{equation}
		\Else
		\begin{equation}
		\sigma_{\ell\ell'}^{12}(k_{\rho})=T^{\ell\ell+1}_{11}\sigma_{\ell+1,\ell'}^{11}+T^{\ell\ell+1}_{12}\sigma_{\ell+1,\ell'}^{21},
		\end{equation}
		\EndIf
		\If{$\ell=\ell'-1$}
		\begin{equation}
		\sigma_{\ell\ell'}^{12}(k_{\rho})=T^{\ell'-1,\ell'}_{11}\sigma_{\ell'\ell'}^{12}+T^{\ell'-1,\ell'}_{12}\sigma_{\ell'\ell'}^{22}+\breve S_{11}^{(\ell'-1)}a_{\ell'}+\breve S_{12}^{(\ell'-1)}b_{\ell'},
		\end{equation}
		\Else
		\begin{equation}
		\sigma_{\ell\ell'}^{12}(k_{\rho})=T^{\ell\ell+1}_{11}\sigma_{\ell+1,\ell'}^{12}+T^{\ell\ell+1}_{12}\sigma_{\ell+1,\ell'}^{22},
		\end{equation}
		\EndIf
		\If{$\ell>0$}
		\If{$\ell>\ell'$}
		\begin{equation}
		\sigma_{\ell\ell'}^{21}(k_{\rho})=\frac{-1}{\alpha_{22}^{(\ell)}}\begin{pmatrix}
		0 & 1
		\end{pmatrix}\left[\frac{C^{(\ell'+1)}}{C^{(\ell)}}\mathbb A^{(\ell')}2e_{\ell'}\breve{\mathbb S}^{(\ell')}\begin{pmatrix}
		-a_{\ell'}\\
		b_{\ell'}
		\end{pmatrix}+\mathbb A^{(\ell)}\begin{pmatrix}
		\sigma_{\ell\ell'}^{11}\\
		0
		\end{pmatrix}\right],
		\end{equation}
		\Else
		\begin{equation}
		\sigma_{\ell\ell'}^{21}(k_{\rho})=-\alpha_{21}^{(\ell)}\sigma_{\ell\ell'}^{11}(k_{\rho})/\alpha_{22}^{(\ell)},
		\end{equation}
		\EndIf
		\If{$\ell>\ell'-1$}
		\begin{equation}
		\sigma_{\ell\ell'}^{22}(k_{\rho})=\frac{-1}{\alpha_{22}^{(\ell)}}\begin{pmatrix}
		0 & 1
		\end{pmatrix}\left[\frac{C^{(\ell')}}{C^{(\ell)}}\mathbb A^{(\ell'-1)}2e_{\ell'-1}\breve{\mathbb S}^{(\ell'-1)}\begin{pmatrix}
		a_{\ell'}\\
		b_{\ell'}
		\end{pmatrix}+\mathbb A^{(\ell)}\begin{pmatrix}
		\sigma_{\ell\ell'}^{12}\\
		0
		\end{pmatrix}\right],
		\end{equation}
		\Else
		\begin{equation}\label{upperlayerdensity}
		\sigma_{\ell\ell'}^{22}(k_{\rho})=-\alpha_{21}^{(\ell)}\sigma_{\ell\ell'}^{12}(k_{\rho})/\alpha_{22}^{(\ell)},
		\end{equation}
		\EndIf
		\EndIf
		\EndFor
		\EndFor
	\end{algorithmic}
\end{algorithm}

According to the formulas used in the {\bf Algorithm 1}, we are able to prove some important properties of the reaction densities, which will play a key role in the analysis in the rest of this paper. First, we have the following lemma for the matrices $\mathbb A^{(\ell)}$ defined in \eqref{transmissionmat}.
\begin{lemma}\label{matrixentrylemma}
	Suppose $a_{\ell}, b_{\ell}>0$ for all $\ell=0, 1, \cdots, L$, then the entries in the second row of the matrices $\mathbb A^{(\ell)}$ satisfy
	\begin{equation}\label{keyinequality}
	|\alpha_{22}^{(\ell)}|^2-|\alpha_{21}^{(\ell)}|^2\geq (|\gamma_1^+|^2-|\gamma_1^-|^2)(|\gamma_2^+|^2-|\gamma_2^-|^2)\cdots(|\gamma_{\ell}^+|^2-|\gamma_{\ell}^-|^2)>0,
	\end{equation}
	for $\ell=1, 2, \cdots, L,$ and any $k_{\rho}\in\{z\in\mathbb C|\mathfrak{Re}z\geq 0\}$.
\end{lemma}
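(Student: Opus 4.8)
The plan is to read off the left-hand side of \eqref{keyinequality} as an indefinite Hermitian form and to follow how that form transforms under the one-step recursion $\mathbb A^{(\ell)}=\mathbb A^{(\ell-1)}\widetilde{\mathbb T}^{\ell-1,\ell}$ implicit in \eqref{transmissionmat}. Put $J:=\mathrm{diag}(-1,1)$, so that for a row vector $w=(w_1,w_2)$ one has $wJw^{*}=|w_2|^2-|w_1|^2$. If $w^{(\ell)}:=(\alpha_{21}^{(\ell)},\alpha_{22}^{(\ell)})$ denotes the second row of $\mathbb A^{(\ell)}$, then $w^{(\ell)}J(w^{(\ell)})^{*}$ is exactly the quantity to be bounded, and the product structure of $\mathbb A^{(\ell)}$ gives $w^{(\ell)}=w^{(\ell-1)}\widetilde{\mathbb T}^{\ell-1,\ell}$ with the convention $w^{(0)}:=(0,1)$, $w^{(0)}J(w^{(0)})^{*}=1$ (empty product). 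Since $a_\ell,b_\ell>0$, the numbers $\gamma_\ell^{\pm}$ are real, hence $\delta_\ell:=|\gamma_\ell^+|^2-|\gamma_\ell^-|^2=(\gamma_\ell^+)^2-(\gamma_\ell^-)^2=4a_\ell b_\ell/(a_{\ell-1}b_{\ell-1})>0$; consequently \eqref{keyinequality} will follow by induction on $\ell$ as soon as we establish the single-step bound $w^{(\ell)}J(w^{(\ell)})^{*}\ge\delta_\ell\, w^{(\ell-1)}J(w^{(\ell-1)})^{*}$, because multiplying through by the positive number $\delta_\ell$ preserves the running inequality and accumulates the product on the right of \eqref{keyinequality}.

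The single-step bound is in turn equivalent to the matrix statement that $\widetilde{\mathbb T}^{\ell-1,\ell}J(\widetilde{\mathbb T}^{\ell-1,\ell})^{*}-\delta_\ell J$ is Hermitian positive semidefinite, since sandwiching a positive semidefinite matrix between $w^{(\ell-1)}$ and its conjugate transpose yields a nonnegative number. To prove it, factor $\widetilde{\mathbb T}^{\ell-1,\ell}=E_{\ell-1}\Gamma_\ell E_\ell$ with $E_j:=\mathrm{diag}(e_j,1)$ and $\Gamma_\ell:=\left(\begin{smallmatrix}\gamma_\ell^+&\gamma_\ell^-\\ \gamma_\ell^-&\gamma_\ell^+\end{smallmatrix}\right)$. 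Two short computations do all the work. First, because $\Gamma_\ell$ is real and symmetric, a direct multiplication yields the identity $\Gamma_\ell J\Gamma_\ell=\delta_\ell J$ — this is where the specific $\gamma^{\pm}$ pattern of the transmission matrices is exploited. Second, $E_jJE_j^{*}=\mathrm{diag}(-|e_j|^2,1)=J+(1-|e_j|^2)\,\mathrm{diag}(1,0)$, and here the hypothesis $k_\rho\in\{z\in\mathbb C\mid\mathfrak{Re}\,z\ge0\}$ enters through $|e_j|=e^{-\mathfrak{Re}(k_\rho)D_j}\le1$ (recall $D_j=d_{j-1}-d_j\ge0$, with $D_0=0$). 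Assembling these,
\begin{equation*}
\widetilde{\mathbb T}^{\ell-1,\ell}J(\widetilde{\mathbb T}^{\ell-1,\ell})^{*}=\delta_\ell J+\delta_\ell(1-|e_{\ell-1}|^2)\,\mathrm{diag}(1,0)+(1-|e_\ell|^2)\,\bigl(E_{\ell-1}\Gamma_\ell\mathbf{e}_1\bigr)\bigl(E_{\ell-1}\Gamma_\ell\mathbf{e}_1\bigr)^{*},
\end{equation*}
with $\mathbf{e}_1=(1,0)^{T}$; the two correction terms are positive semidefinite because $\delta_\ell>0$ and $|e_{\ell-1}|,|e_\ell|\le1$, which is precisely the claim.

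With the base case $w^{(0)}J(w^{(0)})^{*}=1$ in hand, the induction gives $w^{(\ell)}J(w^{(\ell)})^{*}\ge\prod_{j=1}^{\ell}\delta_j$, i.e. \eqref{keyinequality}, and strict positivity of the right-hand side is immediate from $\delta_j>0$ for each $j$. The only step that requires genuine care, and which I would treat as the heart of the argument, is the single-step semidefinite inequality: one has to spot the $J$-form, verify the algebraic identity $\Gamma_\ell J\Gamma_\ell=\delta_\ell J$ that makes the recursion contract the indefinite form by exactly the factor $\delta_\ell$, and observe that the exponential factors $e_j$ can only help on the closed right half-plane because $|e_j|\le1$ there — this last observation is what links the purely algebraic identity to the domain restriction $\mathfrak{Re}\,k_\rho\ge0$ in the statement. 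Everything else is bookkeeping.
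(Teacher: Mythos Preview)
Your proof is correct and follows the same inductive skeleton as the paper's: both establish the single-step inequality
\[
|\alpha_{22}^{(\ell)}|^2-|\alpha_{21}^{(\ell)}|^2\ge\bigl((\gamma_\ell^+)^2-(\gamma_\ell^-)^2\bigr)\bigl(|\alpha_{22}^{(\ell-1)}|^2-|\alpha_{21}^{(\ell-1)}|^2\bigr)
\]
and then iterate. The difference is in how that step is obtained. The paper works in coordinates: it sets $\beta_{\ell-1}=\alpha_{21}^{(\ell-1)}/\alpha_{22}^{(\ell-1)}$, expands $|\beta_{\ell-1}\gamma_\ell^- e_{\ell-1}+\gamma_\ell^+|^2-|\beta_{\ell-1}\gamma_\ell^+ e_{\ell-1}+\gamma_\ell^-|^2$ by hand, and reads off the factor $(\gamma_\ell^+)^2-(\gamma_\ell^-)^2$ together with $1-|\beta_{\ell-1}e_{\ell-1}|^2\ge1-|\beta_{\ell-1}|^2$. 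You instead recognise $|\alpha_{22}|^2-|\alpha_{21}|^2$ as the indefinite form $wJw^*$ with $J=\mathrm{diag}(-1,1)$ and reduce everything to the matrix identity $\Gamma_\ell J\Gamma_\ell=\delta_\ell J$ plus the positive semidefinite correction coming from $|e_j|\le1$. Your route is a bit more conceptual and has the minor advantage of never dividing by $\alpha_{22}^{(\ell-1)}$ (the paper's $\beta_{\ell-1}$ tacitly uses that this is nonzero, which is of course implied by the induction hypothesis). Substantively, though, the two arguments are the same computation in different notation.
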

\begin{proof}
	By the definition of $\widetilde{\mathbb T}^{\ell-1,\ell}$, $\mathbb A^{(\ell)}$ in \eqref{transmissionmat}, the entries $\{\alpha_{21}^{(\ell)}, \alpha_{22}^{(\ell)}\}_{\ell=1}^{L}$ can be calculated recursively as
	\begin{equation}\label{alpharecursion}
	\begin{split}
	\alpha_{21}^{(1)}=e_1\gamma_1^-,\quad \alpha_{22}^{(1)}=\gamma_1^+,\\
	\alpha_{21}^{(2)}=\alpha_{21}^{(1)}\gamma_2^+e_1e_2+\alpha_{22}^{(1)}\gamma_2^-e_2,\quad \alpha_{22}^{(2)}=\alpha_{21}^{(1)}\gamma_2^-e_1+\alpha_{22}^{(1)}\gamma_2^+,\\
	\cdots\cdots\cdots\\
	\alpha_{21}^{(\ell)}=\alpha_{21}^{(\ell-1)}\gamma_{\ell}^+e_{\ell-1}e_{\ell}+\alpha_{22}^{(\ell-1)}\gamma_{\ell}^-e_{\ell},\quad \alpha_{22}^{(\ell)}=\alpha_{21}^{(\ell-1)}\gamma_{\ell}^-e_{\ell-1}+\alpha_{22}^{(\ell-1)}\gamma_{\ell}^+.
	\end{split}
	\end{equation}
	Naturally, we will prove the conclusion \eqref{keyinequality} by induction. As  $a_{\ell}, b_{\ell}>0$ by assumption and $|e_{\ell}|\leq 1$ for all  $k_{\rho}\in\{z\in\mathbb C|\mathfrak{Re}z\geq 0\}$, we obtain
	\begin{equation}
	|\gamma_{\ell}^+|=\Big|\frac{a_{\ell}}{a_{\ell-1}}+\frac{b_{\ell}}{b_{\ell-1}}\Big|>\Big|\frac{a_{\ell}}{a_{\ell-1}}-\frac{b_{\ell}}{b_{\ell-1}}\Big|=|\gamma^{-}_{\ell}|\geq|\gamma_{\ell}^{-}e_{\ell}|,\quad {\rm if}\;\;\mathfrak{Re}k_{\rho}\geq 0.
	\end{equation}
	Therefore, \eqref{keyinequality} is true for $\ell=1$ as
	\begin{equation}
	\big|\alpha_{22}^{(1)}\big|^2-\big|\alpha_{21}^{(1)}\big|^2=|\gamma_{1}^+|^2-|\gamma_{1}^{-}e_{1}|^2\geq |\gamma_{1}^+|^2-|\gamma_{1}^{-}|^2>0.
	\end{equation}
	Assume
	\begin{equation}\label{assumption}
	\big|\alpha_{22}^{(s)}\big|^2-\big|\alpha_{21}^{(s)}\big|^2\geq(|\gamma_1^+|^2-|\gamma_1^-|^2)(|\gamma_2^+|^2-|\gamma_2^-|^2)\cdots(|\gamma_s^+|^2-|\gamma_s^-|^2),
	\end{equation}
	is true for all $s=2, 3, \cdots, \ell-1$. By recursion \eqref{alpharecursion}, we have
	\begin{equation}\label{modules}
	\big|\alpha_{21}^{(\ell)}\big|=|\beta_{\ell-1}\gamma_{\ell}^+e_{\ell-1}+\gamma_{\ell}^-|\big|\alpha_{22}^{(\ell-1)}e_{\ell}\big|,\quad \big|\alpha_{22}^{(\ell)}\big|=|\beta_{\ell-1}\gamma_{\ell}^-e_{\ell-1}+\gamma_{\ell}^+||\alpha_{22}^{(\ell-1)}|,
	\end{equation}
	where $\beta_{\ell}:=\alpha_{21}^{(\ell)}/\alpha_{22}^{(\ell)}$. Noting that $\gamma_{\ell}^{\pm}$ are real, then
	\begin{equation*}
	\begin{split}
	|\beta_{\ell-1}\gamma_{\ell}^+e_{\ell-1}+\gamma_{\ell}^-|^2=|\beta_{\ell-1}e_{\ell-1}|^2(\gamma_{\ell}^+)^2+2\gamma_{\ell}^+\gamma_{\ell}^-\mathfrak{Re}\{\beta_{\ell-1}e_{\ell-1}\}+(\gamma_{\ell}^-)^2,\\
	|\beta_{\ell-1}\gamma_{\ell}^-e_{\ell-1}+\gamma_{\ell}^+|^2=|\beta_{\ell-1}e_{\ell-1}|^2(\gamma_{\ell}^-)^2+2\gamma_{\ell}^+\gamma_{\ell}^-\mathfrak{Re}\{\beta_{\ell-1}e_{\ell-1}\}+(\gamma_{\ell}^+)^2.
	\end{split}
	\end{equation*}
	Therefore
	\begin{equation*}
	|\beta_{\ell-1}\gamma_{\ell}^-e_{\ell-1}+\gamma_{\ell}^+|^2-|\beta_{\ell-1}\gamma_{\ell}^+e_{\ell-1}+\gamma_{\ell}^-|^2=[(\gamma_{\ell}^+)^2-(\gamma_{\ell}^-)^2](1-|\beta_{\ell-1}e_{\ell-1}|^2).
	\end{equation*}
	Together with \eqref{modules} and the fact $|e_{\ell}|\leq 1$ for all  $k_{\rho}\in\{z\in\mathbb C|\mathfrak{Re}z\geq 0\}$, we obtain
	\begin{equation*}
	\big|\alpha_{22}^{(\ell)}\big|^2-\big|\alpha_{21}^{(\ell)}\big|^2\geq[(\gamma_{\ell}^+)^2-(\gamma_{\ell}^-)^2](1-|\beta_{\ell-1}|^2)\big|\alpha_{22}^{(\ell-1)}\big|^2=[(\gamma_{\ell}^+)^2-(\gamma_{\ell}^-)^2](\big|\alpha_{22}^{(\ell-1)}\big|^2-\big|\alpha_{21}^{(\ell-1)}\big|^2).
	\end{equation*}
	Then, we complete the proof by applying the assumption \eqref{assumption}.
\end{proof}
\begin{prop}\label{Prop:densityprop}
	Suppose $a_{\ell}, b_{\ell}>0$ for all $\ell=0, 1, \cdots, L$, then all reaction densities  $\sigma_{\ell\ell'}^{\mathfrak{ab}}(k_{\rho})$ in \eqref{generalcomponents} are continuous and bounded in $\{k_{\rho}|\mathfrak{Re}k_{\rho}\geq 0\}$. Moreover, they are analytic in the right half complex plane $\{k_{\rho}|\mathfrak{Re}k_{\rho}>0\}$
\end{prop}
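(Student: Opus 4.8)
The plan is to observe that, by Algorithm \ref{algorithm2}, every reaction density $\sigma_{\ell\ell'}^{\mathfrak{ab}}(k_\rho)$ is assembled from a short list of elementary functions by finitely many additions and multiplications, the only division involved being by the functions $\alpha_{22}^{(\ell)}(k_\rho)$. Let $\mathcal H$ denote the set of functions that are analytic on the open right half plane $\{\mathfrak{Re}\,k_\rho>0\}$ and continuous and bounded on its closure $\{\mathfrak{Re}\,k_\rho\ge 0\}$. Since $\mathcal H$ is closed under sums and products, and $f/g\in\mathcal H$ whenever $f,g\in\mathcal H$ and $|g|$ is bounded below by a positive constant on $\{\mathfrak{Re}\,k_\rho\ge 0\}$, it will suffice to place all the building blocks of the recursion in $\mathcal H$ and to check that each $\alpha_{22}^{(\ell)}$ is bounded away from $0$ there.

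For the building blocks: since each slab width $D_\ell=d_{\ell-1}-d_\ell$ is positive, the functions $e_\ell=e^{-k_\rho D_\ell}$ are entire and obey $|e_\ell(k_\rho)|\le 1$ whenever $\mathfrak{Re}\,k_\rho\ge 0$, so $e_\ell\in\mathcal H$. Consequently the entries of $\widetilde{\mathbb T}^{\ell-1,\ell}$, being $\gamma_\ell^{\pm}$-coefficient monomials in the $e_j$, lie in $\mathcal H$, and so do the entries of $\mathbb A^{(\ell)}=\widetilde{\mathbb T}^{01}\cdots\widetilde{\mathbb T}^{\ell-1,\ell}$, which are polynomials in the $e_j$ with bounds depending only on $L$ and the $\gamma_j^{\pm}$. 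The same holds for $T_{11}^{\ell-1,\ell}=\tfrac12\gamma_\ell^+ e_\ell$ and $T_{12}^{\ell-1,\ell}=\tfrac12\gamma_\ell^-$, for the scalars $\breve S_{11}^{(\ell)}=1/(2a_\ell)$ and $\breve S_{12}^{(\ell)}=1/(2b_\ell)$, for the matrix $2e_\ell\breve{\mathbb S}^{(\ell)}$ in which the prefactor $e_\ell$ exactly cancels the $1/e_\ell$ in the second row of $\breve{\mathbb S}^{(\ell)}$, and for the ratios $C^{(i)}/C^{(j)}=\prod_{k=i}^{j-1}2e_k$, which occur in Algorithm \ref{algorithm2} only with $i\le j$ and are then entire and bounded by $2^{\,j-i}$ on $\{\mathfrak{Re}\,k_\rho\ge 0\}$.

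For the denominators, note that $|\alpha_{22}^{(\ell)}|^2\ge|\alpha_{22}^{(\ell)}|^2-|\alpha_{21}^{(\ell)}|^2$, which by Lemma \ref{matrixentrylemma} is at least $\prod_{j=1}^{\ell}(|\gamma_j^+|^2-|\gamma_j^-|^2)$, a strictly positive constant because $|\gamma_j^+|>|\gamma_j^-|$ when $a_j,b_j>0$. Hence $\alpha_{22}^{(\ell)}$ is zero-free and bounded below on $\{\mathfrak{Re}\,k_\rho\ge 0\}$; being entire, it lies in $\mathcal H$, and therefore $1/\alpha_{22}^{(\ell)}\in\mathcal H$. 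Then I would run the induction built into Algorithm \ref{algorithm2}: the base cases \eqref{bottommostdensity1} and \eqref{bottommostdensity2} exhibit $\sigma_{L\ell'}^{21}$ and $\sigma_{L\ell'}^{22}$ as products of building blocks times $1/\alpha_{22}^{(L)}$, hence members of $\mathcal H$, while each of the remaining lines \eqref{middledensity} through \eqref{upperlayerdensity} writes a new density as a finite sum of products of previously constructed densities, building blocks, and at most one factor $1/\alpha_{22}^{(\ell)}$. By the closure properties of $\mathcal H$, every $\sigma_{\ell\ell'}^{\mathfrak{ab}}$ belongs to $\mathcal H$, which is precisely the assertion of the proposition.

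The step I expect to demand the most care is the tacit claim above that the genuinely singular factors $1/e_\ell$ — present in $C^{(\ell)}$, in the second row of $\breve{\mathbb S}^{(\ell)}$, and in $T_{21}^{\ell-1,\ell},T_{22}^{\ell-1,\ell}$ — never reach a density. Establishing this is a matter of inspecting Algorithm \ref{algorithm2}: one must verify that $\breve{\mathbb S}^{(\ell)}$ enters only through its first row $\breve S_{11}^{(\ell)},\breve S_{12}^{(\ell)}$ or pre-multiplied by $2e_\ell$, that $T_{21}^{\ell-1,\ell}$ and $T_{22}^{\ell-1,\ell}$ never enter, and that every ratio $C^{(i)}/C^{(j)}$ that appears has $i\le j$. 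Granting this bookkeeping, the whole analytic content of the proposition reduces to Lemma \ref{matrixentrylemma} together with the elementary bound $|e_\ell|\le 1$ on the closed right half plane.
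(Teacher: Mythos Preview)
Your proposal is correct and follows essentially the same approach as the paper: identify the elementary building blocks of Algorithm~\ref{algorithm2} (the $e_\ell$, the entries of $\widetilde{\mathbb T}^{\ell-1,\ell}$, $\mathbb A^{(\ell)}$, $2e_\ell\breve{\mathbb S}^{(\ell)}$, $T_{11}^{\ell,\ell+1}$, $T_{12}^{\ell,\ell+1}$, and the ratios $C^{(i)}/C^{(j)}$ with $i\le j$), observe they are all bounded and analytic on the closed right half plane, invoke Lemma~\ref{matrixentrylemma} to bound $|\alpha_{22}^{(\ell)}|$ away from zero, and then trace through the recursion. Your explicit framing via the algebra $\mathcal H$ and your careful flagging of the $1/e_\ell$ bookkeeping make the argument slightly more systematic than the paper's version, but the content is the same.
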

\begin{proof}
	From the definition \eqref{expdef} and \eqref{transmissionmat}, we have
	\begin{equation*}
	\begin{split}
	&T^{\ell\ell+1}_{11}=\frac{a_{\ell+1}b_{\ell}+a_{\ell}b_{\ell+1}}{2a_{\ell}b_{\ell}}e_{\ell+1},\quad  T^{\ell\ell+1}_{12}= \frac{a_{\ell+1}b_{\ell}-a_{\ell}b_{\ell+1}}{2a_{\ell}b_{\ell}},\quad \widetilde{\mathbb T}^{\ell-1,\ell}=\begin{pmatrix}
	\displaystyle \gamma_{\ell}^+e_{\ell-1}e_{\ell} & \displaystyle\gamma_{\ell}^-e_{\ell-1}\\[7pt]
	\displaystyle \gamma_{\ell}^-e_{\ell} & \displaystyle\gamma_{\ell}^+
	\end{pmatrix},\\
	&2e_{\ell}\breve{\mathbb S}^{(\ell)}=\begin{pmatrix}
	\displaystyle a_{\ell}^{-1}e_{\ell} & \displaystyle b_{\ell}^{-1}e_{\ell}\\
	\displaystyle a_{\ell}^{-1} & \displaystyle-b_{\ell}^{-1}
	\end{pmatrix},\quad
	\frac{C^{(\ell_1)}}{C^{(\ell_2)}}=\begin{cases}
	1 & \ell_1=\ell_2,\\
	2^{\ell_2-\ell_1}e^{-k_{\rho}(d_{\ell_1-1}-d_{\ell_2-1})} & 0\leq\ell_1<\ell_2.
	\end{cases}
	\end{split}
	\end{equation*}
	As they consist of constants $\{a_{\ell}, b_{\ell}\}_{\ell=0}^L$ and their product with exponential functions of $k_{\rho}$, $\{T_{11}^{\ell\ell+1}, T_{12}^{\ell\ell+1}\}_{\ell=0}^{L-1}$, $\{C^{(\ell_1)}/C^{(\ell_2)}\}_{\ell_1\leq \ell_2}$ and the entries of matrices  $\{\widetilde{\mathbb T}^{\ell-1,\ell}\}_{\ell=1}^L$, $\{2e_{\ell}\breve{\mathbb S}^{(\ell)}\}_{\ell=0}^L$ and $\mathbb A^{(\ell)}=\widetilde{\mathbb T}^{01}\cdots \widetilde{\mathbb T}^{\ell-1,\ell}$ are all continuous and bounded in $\{k_{\rho}|\mathfrak{Re}k_{\rho}\geq 0\}$. Moreover, by lemma \ref{matrixentrylemma}, the module of the denominators $\{\alpha_{22}^{(\ell)}\}_{\ell=1}^L$ in \eqref{bottommostdensity1}-\eqref{upperlayerdensity} are bounded below in $\{k_{\rho}|\mathfrak{Re}k_{\rho}\geq 0\}$ by some positive constants determined by $\{a_{\ell}, b_{\ell}\}_{\ell=0}^L$.
	Therefore, checking the formulas \eqref{bottommostdensity1}-\eqref{upperlayerdensity} with the discussions above, it is not difficult to conclude that all reaction densities are continuous and bounded in $\{k_{\rho}|\mathfrak{Re}k_{\rho}\geq 0\}$ and analytic in the right half complex plane.	
\end{proof}

\section{Multipole and local expansions, shifting and translation operators for the Green's function of 3-dimensional Laplace equation in layered media}
In this section, we review the derivation of the ME, LEs and shifting and translation operators used in the FMM for charge interaction in multi-layered media(cf. \cite{wang2019fastlaplace}).

\subsection{Multipole and local expansions, shifting and translation operators for free space Green's function} According to the expression \eqref{layeredGreensfun}, the layered media Green's function consists of free space and reaction field components. The FMM for Laplace equation in layered media (cf. \cite{wang2019fastlaplace}) is a combination of the classic FMM and new FMMs for the free space and reaction field components, respectively. Before we go to the expansions for the reaction field components, let us first review the classic formulas on which the classic FMM rely.

Given source and target centers $\bs r_c^s$ and $\bs r_c^t$ close to source $\bs r'$ and target $\bs r$, i.e,  $|\bs r'-\bs r_c^s|<|\bs r-\bs r_c^s|$ and $|\bs r'-\bs r_c^t|>|\bs r-\bs r_c^t|$, the free space Green's function has Taylor expansions
\begin{equation}\label{expansionbeforeme}
\frac{1}{4\pi|\bs r-\bs r'|}=\frac{1}{4\pi|(\bs r-\bs r_c^s)-(\bs r'-\bs r_c^s)|}=\frac{1}{4\pi}\sum\limits_{n=0}^{\infty}\frac{P_n(\cos\gamma_s)}{ r_s}\Big(\frac{r'_s}{ r_s}\Big)^n,
\end{equation}
and
\begin{equation}\label{localexpansionbeforeme}
\frac{1}{4\pi|\bs r-\bs r'|}=\frac{1}{4\pi|(\bs r-\bs r_c^t)-(\bs r'-\bs r_c^t)|}=\frac{1}{4\pi}\sum\limits_{n=0}^{\infty}\frac{P_n(\cos\gamma_t)}{r_t'}\Big(\frac{r_t}{r_t'}\Big)^n,
\end{equation}
where  $(r_s, \theta_s,\varphi_s)$, $(r_t,\theta_t,\varphi_t)$ are the spherical coordinates of $\bs r-\bs r_c^s$ and $\bs r-\bs r_c^t$, $(r'_s, \theta'_s,\varphi'_s)$, $(r'_t,\theta'_t,\varphi'_t)$ are the spherical coordinates of $\bs r'-\bs r_c^s$ and $\bs r'-\bs r_c^t$( see Fig. \ref{3dspherical}) and
\begin{equation}
\begin{split}
&\cos\gamma_s=\cos\theta_s\cos\theta'_s+\sin\theta_s\sin\theta'_s\cos(\varphi_s-\varphi'_s),\\
&\cos\gamma_t=\cos\theta_t\cos\theta_t'+\sin\theta_t\sin\varphi_t'\cos(\varphi_t-\varphi_t').
\end{split}
\end{equation}
Moreover, the following error estimates
\begin{equation}\label{meerror}
\left|\frac{1}{4\pi|\bs r-\bs r'|}-\frac{1}{4\pi}\sum\limits_{n=0}^{p}\frac{P_n(\cos\gamma_s)}{ r_s}\Big(\frac{r'_s}{ r_s}\Big)^n\right|\leq \frac{1}{4\pi(r_s-r_s')}\Big(\frac{r'_s}{r_s}\Big)^{p+1}, \quad r_s>r_s',
\end{equation}
and
\begin{equation}\label{leerror}
\left|\frac{1}{4\pi|\bs r-\bs r'|}-\frac{1}{4\pi}\sum\limits_{n=0}^{p}\frac{P_n(\cos\gamma_t)}{r_t'}\Big(\frac{r_t}{r_t'}\Big)^n\right|\leq\frac{1}{4\pi(r_t'-r_t)}\Big(\frac{r_t}{r_t'}\Big)^{p+1}, \quad r_t<r_t',
\end{equation}
for any $p\geq 1$ can be obtained by using the fact $|P_n(x)|\leq 1$ for all $x\in[-1, 1]$.
\begin{figure}[ht!]\label{3dspherical}
	\centering
	\includegraphics[scale=1.1]{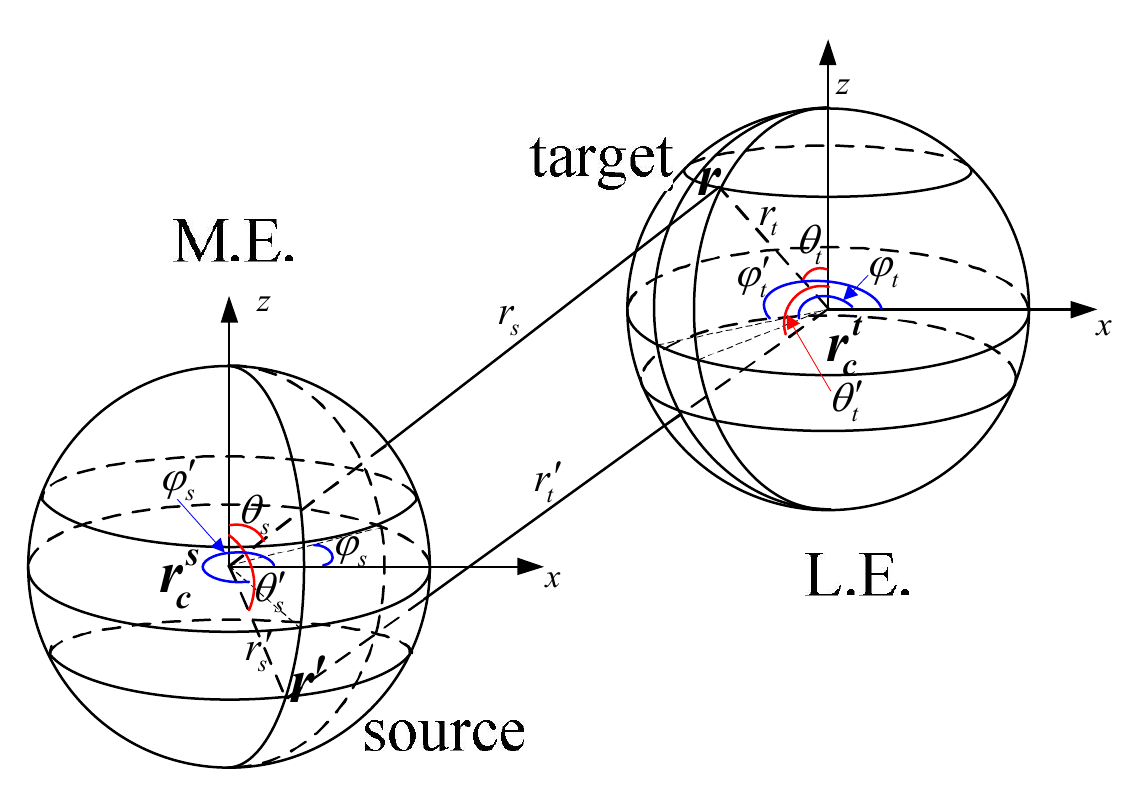}
	\caption{Spherical coordinates used in multipole and local expansions.}
\end{figure}

Note that $P_n(\cos\gamma_s)$, $P_n(\cos\gamma_t)$ still mix the source and target information ($\bs r$ and $\bs r'$) together. The following addition theorems (cf. \cite{grengard1988rapid, epton1995multipole}) will be used to derive source/target separated ME, LE and corresponding shifting and translation operators. As in \cite{wang2019fastlaplace}, we will  re-present the theorems using scaled spherical harmonics
\begin{equation}\label{sphericalharmonics}
Y_n^m(\theta,\varphi)=(-1)^m\sqrt{\frac{2n+1}{4\pi}\frac{(n-m)!}{(n+m)!}}P_n^{m}(\cos\theta)e^{\ri m\varphi}:=\widehat P_n^{m}(\cos\theta)e^{\ri m\varphi},
\end{equation}
where $P_n^m(x)$ (resp. $\widehat P_n^m(x)$) is the associated (resp. normalized) Legendre function of degree $n$ and order $m$. We also use notations
\begin{equation}\label{constantdef}
c_n=\sqrt{\frac{2n+1}{4\pi}},\quad A_n^m=\frac{(-1)^nc_n}{\sqrt{(n-m)!(n+m)!}},\quad |m|\leq  n,
\end{equation}
in the rest part of this paper.
\begin{theorem}\label{addthmleg}
	{\bf (Addition theorem for Legendre polynomials)} Let $P$ and $Q$ be points with spherical coordinates $(r,\theta,\varphi)$ and $(\rho,\alpha,\beta)$, respectively, and let $\gamma$ be the angle subtended between them. Then
	\begin{equation}
	P_n(\cos\gamma)=\frac{4\pi}{2n+1}\sum\limits_{m=-n}^n\overline{Y_n^{m}(\alpha,\beta)}Y_n^m(\theta,\varphi).
	\end{equation}
\end{theorem}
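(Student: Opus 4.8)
The plan is to recognize the sum on the right-hand side as a constant multiple of the reproducing kernel of the space $\mathcal{H}_n$ of spherical harmonics of degree $n$, and to determine that constant by exploiting rotation invariance. Throughout, I identify $P$ and $Q$ with the unit vectors $\hat x = \hat x(\theta,\varphi)$ and $\hat y = \hat y(\alpha,\beta)$ on the sphere $S^2$, so that $\hat x\cdot\hat y = \cos\gamma$.

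First I would record that, with the normalization in \eqref{sphericalharmonics}, the family $\{Y_n^m\}_{m=-n}^n$ is an orthonormal basis of the $(2n+1)$-dimensional space $\mathcal{H}_n\subset L^2(S^2)$; indeed a direct computation using $\int_{-1}^1 [P_n^m(t)]^2\,dt = \frac{2}{2n+1}\frac{(n+m)!}{(n-m)!}$ gives $\|Y_n^m\|_{L^2(S^2)}=1$. Define
\[
Z_n(\hat x,\hat y) := \sum_{m=-n}^n \overline{Y_n^m(\hat y)}\,Y_n^m(\hat x).
\]
Two structural facts drive the argument: (i) $Z_n$ does not depend on the choice of orthonormal basis of $\mathcal{H}_n$; and (ii) since every $Y_n^m$ is the restriction to $S^2$ of a homogeneous harmonic polynomial of degree $n$, the space $\mathcal{H}_n$ is carried to itself by any rotation $R\in SO(3)$, and the rotation acts unitarily on it. Combining (i) and (ii) yields $Z_n(R\hat x, R\hat y) = Z_n(\hat x,\hat y)$ for all $R$. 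A rotation-invariant function of two unit vectors depends only on their inner product, so $Z_n(\hat x,\hat y) = \phi_n(\cos\gamma)$ for some function $\phi_n$ on $[-1,1]$.

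Next I would identify $\phi_n$. For fixed $\hat y$, the map $\hat x\mapsto Z_n(\hat x,\hat y)$ lies in $\mathcal{H}_n$ and is invariant under all rotations fixing $\hat y$; the subspace of such axially symmetric degree-$n$ spherical harmonics is one-dimensional and is spanned by the zonal harmonic $\hat x\mapsto P_n(\hat x\cdot\hat y)$ (an axially symmetric degree-$n$ harmonic, written in the polar angle about the axis $\hat y$, must solve the Legendre equation of degree $n$ and be regular at the poles). Hence $\phi_n(t) = \lambda_n P_n(t)$ for a constant $\lambda_n$. To pin down $\lambda_n$, integrate the diagonal over $S^2$: on one hand $\int_{S^2} Z_n(\hat x,\hat x)\,dS(\hat x) = \sum_{m=-n}^n \|Y_n^m\|_{L^2(S^2)}^2 = 2n+1$; on the other hand, since $P_n(1)=1$ and $|S^2|=4\pi$, $\int_{S^2}\lambda_n P_n(1)\,dS(\hat x) = 4\pi\lambda_n$. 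Therefore $\lambda_n = \frac{2n+1}{4\pi}$, i.e.
\[
\sum_{m=-n}^n \overline{Y_n^m(\alpha,\beta)}\,Y_n^m(\theta,\varphi) = \frac{2n+1}{4\pi}\,P_n(\cos\gamma),
\]
which is the claimed identity after multiplying through by $\frac{4\pi}{2n+1}$.

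The main obstacle is the pair of facts invoked in the third paragraph: that $\mathcal{H}_n$ carries a unitary $SO(3)$-action (so that $Z_n$ is rotation invariant), and that the axially symmetric elements of $\mathcal{H}_n$ form a line spanned by $\hat x\mapsto P_n(\hat x\cdot\hat y)$. The first is elementary once one knows spherical harmonics are restrictions of homogeneous harmonic polynomials, since harmonicity, homogeneity and degree are all rotation-invariant; the second is the classical zonal-harmonic characterization, obtainable by separation of variables in Laplace's equation. Granting these, everything else is routine bookkeeping with the normalization constants in \eqref{sphericalharmonics}. If a self-contained derivation is not wanted here, one may instead simply appeal to \cite{grengard1988rapid, epton1995multipole}, where this addition theorem is proved in detail.
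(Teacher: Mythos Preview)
The paper does not actually prove this theorem; it is stated as a classical result and attributed to \cite{grengard1988rapid, epton1995multipole} (see the sentence introducing Theorems~\ref{addthmleg}--\ref{theorem:fourthaddition}). Your argument --- identifying $\sum_m \overline{Y_n^m(\hat y)}Y_n^m(\hat x)$ as the rotation-invariant reproducing kernel of $\mathcal{H}_n$, hence a multiple of the zonal harmonic $P_n(\hat x\cdot\hat y)$, and fixing the constant by integrating the diagonal --- is correct and is one of the standard proofs of the addition theorem. Since the paper offers nothing to compare against, there is no discrepancy to report; your final remark that one may simply cite \cite{grengard1988rapid, epton1995multipole} is exactly what the paper does.
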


\begin{theorem}\label{theorem:firstaddition}
	Let $Q=(\rho,\alpha,\beta)$ be the center of expansion of an arbitrary spherical harmonic of negative degree. Let the point $P=(r,\theta,\varphi)$, with $r>\rho$, and $P-Q=(r', \theta', \varphi')$. Then
	\begin{equation*}
	\frac{Y_{n'}^{m'}(\theta', \varphi')}{r'^{n'+1}}=\sum\limits_{n=0}^{\infty}\sum\limits_{m=-n}^n\frac{(-1)^{|m+m'|-|m'|}A_n^mA_{n'}^{m'}\rho^nY_n^{-m}(\alpha,\beta)}{c_n^2A_{n+n'}^{m+m'}}\frac{Y_{n+n'}^{m+m'}(\theta,\varphi)}{r^{n+n'+1}}.
	\end{equation*}
\end{theorem}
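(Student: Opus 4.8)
The plan is to fix the expansion center $Q=(\rho,\alpha,\beta)$ and to treat the left-hand side as a function of $P$ alone. Since $(r',\theta',\varphi')$ are the spherical coordinates of $P-Q$, the quantity $Y_{n'}^{m'}(\theta',\varphi')/(r')^{n'+1}$ is the exterior (irregular) solid harmonic evaluated at $P-Q$; it is harmonic in $P$ whenever $P\neq Q$, in particular on the shell $\{r>\rho\}$, and it is regular at infinity. By the exterior harmonic expansion of such functions (the exterior analogue of the fact that a function harmonic on a ball equals its harmonic series there, the radius of convergence being dictated by the nearest singularity, here $P=Q$ at distance $\rho$), there are unique coefficients $b_n^m$ with
\[
\frac{Y_{n'}^{m'}(\theta',\varphi')}{(r')^{n'+1}}=\sum_{n=0}^{\infty}\sum_{m=-n}^{n}b_{n}^{m}\,\frac{Y_{n}^{m}(\theta,\varphi)}{r^{n+1}},\qquad r>\rho,
\]
the series converging locally uniformly in $\{r>\rho\}$. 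Since $r'\geq r-\rho$ and $|Y_{n'}^{m'}|$ is bounded on the unit sphere, the left side is $O(r^{-(n'+1)})$ as $r\to\infty$; multiplying by $r^{N_0+1}$, with $N_0$ the least degree carrying a nonzero coefficient, the left side---which is then $O(r^{N_0-n'})$---tends as $r\to\infty$ to a nonzero spherical harmonic of degree $N_0$, forcing $N_0\geq n'$. Hence only the degrees $n+n'$, $n\geq0$, occur, and the theorem reduces to identifying the coefficient of $Y_{n+n'}^{m+m'}(\theta,\varphi)/r^{n+n'+1}$ as $(-1)^{|m+m'|-|m'|}A_n^m A_{n'}^{m'}\rho^n Y_n^{-m}(\alpha,\beta)/(c_n^2 A_{n+n'}^{m+m'})$.

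To obtain these coefficients I would reduce to the scalar case. The case $n'=m'=0$ is exactly the classical bilinear expansion, which is immediate from Theorem \ref{addthmleg} together with $1/|P-Q|=\sum_{n\geq0}(\rho^n/r^{n+1})P_n(\cos\gamma)$, namely
\[
\frac{1}{|P-Q|}=\sum_{N=0}^{\infty}\frac{1}{c_N^{2}}\,\frac{\rho^{N}}{r^{N+1}}\sum_{\mu=-N}^{N}\overline{Y_{N}^{\mu}(\alpha,\beta)}\,Y_{N}^{\mu}(\theta,\varphi),\qquad \rho<r.
\]
By Maxwell's representation of solid harmonics, up to an explicit constant assembled from $c_{n'}$ and $A_{n'}^{m'}$ the exterior solid harmonic $Y_{n'}^{m'}(\theta',\varphi')/(r')^{n'+1}$ equals $(\partial_x+\ri\,\partial_y)^{m'}\partial_z^{\,n'-m'}$ (for $m'\geq0$; its conjugate for $m'<0$), taken in the $P$ variables, applied to $1/|P-Q|$. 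Applying this operator term by term to the scalar expansion---legitimate because that series and all its derivatives converge locally uniformly on $\{\rho<r\}$---and using that the \emph{same} operator sends each exterior solid harmonic $Y_N^{\mu}(\theta,\varphi)/r^{N+1}$ to an explicit finite combination of the $Y_{N+n'}^{\nu}(\theta,\varphi)/r^{N+n'+1}$, one reads off the $b_n^m$; matching indices against the statement produces the factors $A_n^m$, $A_{n+n'}^{m+m'}$ and $c_n$, while the Condon--Shortley phase embedded in \eqref{sphericalharmonics} is precisely what generates the sign $(-1)^{|m+m'|-|m'|}$.

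The real work---and the step I expect to be the main obstacle---is carrying out the action of $(\partial_x+\ri\,\partial_y)^{m'}\partial_z^{\,n'-m'}$ on $Y_N^{\mu}/r^{N+1}$ with every normalization tracked, since all the combinatorics (the Gaunt-type factorials) live there. I would handle this by first settling the highest-weight case $m'=n'$, where the operator is the pure power $(\partial_x+\ri\,\partial_y)^{n'}$ and the relevant solid harmonics have the transparent $(x+\ri y)^{k}$-type form, so that the multinomial bookkeeping and the appearance of $A_n^m$ become explicit; then I would descend in $m'$ either by applying the lowering operator $\partial_x-\ri\,\partial_y$ or, equivalently, by using the standard ladder relations for the associated Legendre functions, checking at each step that the phase conventions propagate to give $(-1)^{|m+m'|-|m'|}$. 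Two routine matters should also be recorded: $Y_n^{-m}(\alpha,\beta)$ and $A_{n+n'}^{m+m'}$ are understood to vanish (or the corresponding term is omitted) when the order exceeds the degree in absolute value, so the inner sum over $m$ is effectively finite for each $n$; and convergence of the resulting double series for $r>\rho$ is inherited from that of the scalar expansion. A fully equivalent route is to quote the corresponding statements in \cite{grengard1988rapid,epton1995multipole} and perform the change of normalization to the scaled harmonics \eqref{sphericalharmonics}; in practice I would take whichever route minimizes the constant-chasing.
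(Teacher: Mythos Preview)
The paper does not prove this theorem. It is stated as one of the known addition theorems for spherical harmonics and is simply cited from \cite{grengard1988rapid,epton1995multipole}; the paper then uses it as a black box to derive the M2M shifting operator. In other words, the paper takes exactly the route you mention in your final sentence: quote the result and adjust the normalization to the scaled harmonics \eqref{sphericalharmonics}.

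Your proposed argument---expand $1/|P-Q|$ bilinearly via Theorem~\ref{addthmleg}, then generate the general $(n',m')$ case by applying the Maxwell-type differential operators $(\partial_x+\ri\partial_y)^{m'}\partial_z^{\,n'-m'}$ and tracking the ladder relations and Condon--Shortley phases---is a standard and correct way to establish the formula, and is essentially how the cited references proceed. The only caveat is the one you flag yourself: the bookkeeping of constants (the $A_n^m$, $c_n$, and the sign $(-1)^{|m+m'|-|m'|}$) is where all the content lives, and your sketch does not actually carry it out. If you want a self-contained proof rather than a citation, that computation must be done in full; otherwise, matching the paper amounts to citing \cite{grengard1988rapid,epton1995multipole} and verifying the normalization.
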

\begin{theorem}\label{theorem:secondaddition}
	Let $Q=(\rho,\alpha,\beta)$ be the center of expansion of an arbitrary spherical harmonic of negative degree. Let the point $P=(r,\theta,\varphi)$, with $r<\rho$, and $P-Q=(r', \theta', \varphi')$. Then
	\begin{equation*}
	\frac{Y_{n'}^{m'}(\theta', \varphi')}{r'^{n'+1}}=\sum\limits_{n=0}^{\infty}\sum\limits_{m=-n}^n\frac{(-1)^{n'+|m|}A_n^mA_{n'}^{m'}\cdot Y_{n+n'}^{m'-m}(\alpha,\beta)}{c_n^2A_{n+n'}^{m'-m}\rho^{n+n'+1}}r^nY_n^{m}(\theta,\varphi).
	\end{equation*}
\end{theorem}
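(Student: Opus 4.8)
The statement is the classical singular-to-regular (multipole-to-local) addition theorem for solid harmonics, and my plan is to derive it from the single scalar identity furnished by Theorem \ref{addthmleg}, propagating it through the ladder/recurrence structure of solid harmonics.

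\emph{Step 1: base case.} For $r<\rho$ the Laplace expansion of the Newton kernel gives $\frac{1}{|P-Q|}=\sum_{n=0}^\infty \frac{r^n}{\rho^{n+1}}P_n(\cos\gamma)$, where $\gamma$ is the angle between $OP$ and $OQ$. Substituting Theorem \ref{addthmleg} and using $c_n^2=\frac{2n+1}{4\pi}$ together with the conjugation identity $\overline{Y_n^m}=(-1)^m Y_n^{-m}$ and the parity fact $(-1)^{|m|}=(-1)^m$ turns this into exactly Theorem \ref{theorem:secondaddition} in the special case $n'=m'=0$ (note $Y_0^0=c_0=A_0^0$). This is the seed from which the general case is generated.

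\emph{Step 2: generation by ladder operators.} The exterior solid harmonic $\frac{Y_{n'}^{m'}(\theta',\varphi')}{(r')^{n'+1}}$, with $\mathbf r'=P-Q$, is a degree $-(n'+1)$ solid harmonic in $\mathbf r'$ and, by the Maxwell/Sylvester representation, equals a fixed constant (which in this paper's normalization is precisely $A_{n'}^{m'}$ up to a sign) times a product of $n'$ first-order constant-coefficient operators $\partial_z,\ \partial_x\pm\ri\partial_y$ applied to $1/r'=1/|P-Q|$. Since $1/|P-Q|$ depends on $Q$ only through $P-Q$, these derivatives may be taken in the Cartesian components of $Q$. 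I would therefore apply the same operator, in the $Q$-variables, to both sides of the seed identity $\frac{1}{|P-Q|}=\sum_{n,m}\frac{1}{c_n^2}\frac{r^n}{\rho^{n+1}}\overline{Y_n^m(\alpha,\beta)}\,Y_n^m(\theta,\varphi)$. Term-by-term differentiation is legitimate by the absolute and locally uniform convergence of this series on $\{r\le\rho-\epsilon\}$. On the right, the ladder operators act on $\frac{\overline{Y_n^m(\alpha,\beta)}}{\rho^{n+1}}$, itself an exterior solid harmonic in $Q$ of degree $-(n+1)$; the three-term recurrences for solid harmonics then collapse the result into a single exterior harmonic of degree $-(n+n'+1)$, a multiple of $\frac{\overline{Y_{n+n'}^{\,m-m'}(\alpha,\beta)}}{\rho^{n+n'+1}}$. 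Converting back with $\overline{Y_{n+n'}^{\,m-m'}}=(-1)^{m-m'}Y_{n+n'}^{\,m'-m}$ produces the harmonic $Y_{n+n'}^{m'-m}(\alpha,\beta)$ of the statement, while the left side delivers $Y_{n'}^{m'}(\theta',\varphi')/(r')^{n'+1}$ and the basis factor $r^n Y_n^m(\theta,\varphi)$ is untouched.

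\emph{Main obstacle and alternative.} The structural content---harmonicity of the left side in the ball $r<\rho$, existence and uniqueness of a regular expansion $\sum b_n^m r^n Y_n^m$, and the validity of term-by-term differentiation---is routine. The real work, and the step I expect to be delicate, is the exact tracking of normalization constants and signs through the ladder recurrences so that they coalesce into $\frac{(-1)^{n'+|m|}A_n^m A_{n'}^{m'}}{c_n^2 A_{n+n'}^{m'-m}}$; in particular the appearance of $|m|$ rather than $m$ in the exponent must be reconciled against the $(-1)^m$ arising from conjugation via $(-1)^{|m|}=(-1)^m$. An equivalent route, avoiding differential operators, fixes the coefficients by spherical orthogonality, $b_n^m\,r^n=\int \frac{Y_{n'}^{m'}(\theta',\varphi')}{(r')^{n'+1}}\,\overline{Y_n^m(\theta,\varphi)}\,d\Omega$, and evaluates the resulting Gaunt-type integrals; this reduces to the same constant bookkeeping. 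Since the identity is classical (cf. \cite{epton1995multipole, grengard1988rapid}), it may alternatively be invoked by citation.
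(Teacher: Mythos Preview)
The paper does not prove this theorem at all: it is stated as one of the classical addition theorems, attributed to \cite{grengard1988rapid,epton1995multipole}, and used without proof. Your final remark---that the identity may simply be invoked by citation---is precisely what the paper does.

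Your sketch therefore goes beyond the paper. The strategy you outline (seed from the Newton-kernel expansion plus Theorem~\ref{addthmleg}, then generate higher degrees by Maxwell/Sylvester ladder operators acting in the $Q$-variables, or equivalently project with spherical orthogonality and evaluate Gaunt integrals) is a standard and correct route to this result. You have also correctly identified the only genuinely delicate point, namely the bookkeeping of normalization constants and signs in this paper's particular convention \eqref{sphericalharmonics}--\eqref{constantdef}; the reconciliation $(-1)^{|m|}=(-1)^m$ you note is exactly what is needed. If you were to write this out in full, the ladder-operator route is cleaner than the Gaunt-integral route because it avoids the $3j$-symbol machinery, but either works.
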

\begin{theorem}\label{theorem:fourthaddition}
	Let $Q=(\rho,\alpha,\beta)$ be the center of expansion of an arbitrary spherical harmonic of negative degree. Let the point $P=(r,\theta,\varphi)$ and $P-Q=(r', \theta', \varphi')$. Then
	\begin{equation*}
	r'^{n'}Y_{n'}^{m'}(\theta', \varphi')=\sum\limits_{n=0}^{n'}\sum\limits_{m=-n}^n\frac{(-1)^{n-|m|+|m'|-|m'-m|}c_{n'}^2 A_n^mA_{n'-n}^{m'-m}\cdot \rho^nY_n^{m}(\alpha,\beta) }{c_{n}^2c_{n'-n}^2A_{n'}^{m'}r^{n-n'}}Y_{n'-n}^{m'-m}(\theta,\varphi).
	\end{equation*}	
\end{theorem}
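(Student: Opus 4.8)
The plan is to recognize the left-hand side as a harmonic polynomial in $\bs r$ and to exploit uniqueness of the decomposition into homogeneous harmonic pieces, thereby reducing the identity to the determination of a single scalar constant in each term. Write $\bs r=(x,y,z)$ and let $\bs q$ be the position vector of $Q=(\rho,\alpha,\beta)$, so that $\bs r'=\bs r-\bs q$. For a vector $\bs x$ with spherical coordinates $(|\bs x|,\theta_{\bs x},\varphi_{\bs x})$ set $R_j^\mu(\bs x):=|\bs x|^{\,j}Y_j^\mu(\theta_{\bs x},\varphi_{\bs x})$; it is classical that each $R_j^\mu$ is a homogeneous harmonic polynomial of degree $j$ and that $\{R_j^\mu\}_{\mu=-j}^{j}$ is a basis of the space $\mathcal H_j$ of such polynomials, with $\dim\mathcal H_j=2j+1$. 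The quantity to be expanded is exactly $R_{n'}^{m'}(\bs r-\bs q)$.

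First I would record the structural constraints satisfied by $f(\bs r):=R_{n'}^{m'}(\bs r-\bs q)$, viewed as a function of $\bs r$ with parameter $\bs q$. It is a polynomial of total degree $n'$ in $\bs r$; it is harmonic in $\bs r$, since $\Delta_{\bs r}R_{n'}^{m'}(\bs r-\bs q)=(\Delta R_{n'}^{m'})(\bs r-\bs q)=0$; by the same computation it is harmonic in $\bs q$; and it is jointly homogeneous of degree $n'$ in $(\bs r,\bs q)$. A harmonic polynomial of degree $n'$ decomposes uniquely into its homogeneous harmonic components, so
\begin{equation*}
R_{n'}^{m'}(\bs r-\bs q)=\sum_{j=0}^{n'}\sum_{\mu=-j}^{j}c_{j,\mu}(\bs q)\,R_j^\mu(\bs r),
\end{equation*}
with coefficients $c_{j,\mu}(\bs q)$ still to be identified.

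Next I would pin down these coefficients by symmetry. Joint homogeneity forces $c_{j,\mu}$ to be homogeneous of degree $n'-j$ in $\bs q$. Applying $\Delta_{\bs q}$ to the displayed identity and using the linear independence of $\{R_j^\mu(\bs r)\}$ as functions of $\bs r$ shows each $c_{j,\mu}$ is harmonic in $\bs q$, hence $c_{j,\mu}(\bs q)=\sum_{\nu}d_{j,\mu,\nu}R_{n'-j}^{\nu}(\bs q)$. Finally, invariance under rotation about the $z$-axis by an angle $\psi$ multiplies $R_{n'}^{m'}(\bs r-\bs q)$ by $e^{\ri m'\psi}$ and multiplies the term $R_{n'-j}^{\nu}(\bs q)R_j^{\mu}(\bs r)$ by $e^{\ri(\nu+\mu)\psi}$, so only $\nu=m'-\mu$ survives. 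Relabelling $n:=n'-j$, $m:=m'-\mu$ (and using the convention that $Y_k^\mu$ and $A_k^\mu$ vanish whenever $|\mu|>k$, which reconciles the summation ranges) collapses the expansion to exactly the form asserted in the theorem, namely a sum of $\rho^{\,n}Y_n^{m}(\alpha,\beta)\,r^{\,n'-n}Y_{n'-n}^{m'-m}(\theta,\varphi)$ with a single undetermined scalar $d_{n,m}$ per pair $(n,m)$.

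It remains to evaluate the constants $d_{n,m}$, and this is where the main work lies. I would determine them by matching a single explicit coefficient on the two sides. Since $R_k^k(\bs x)$ is a constant multiple of $(x+\ri y)^k$ and, more generally, the behaviour of $R_k^\mu$ along the polar axis and its leading dependence on $x+\ri y$ are computed directly from (\ref{sphericalharmonics})--(\ref{constantdef}), picking one well-chosen monomial---for instance the coefficient of a fixed power of $(x+\ri y)$ times a power of $z$ in $\bs r$, paired with the matching pure power in $\bs q$---determines $d_{n,m}$ uniquely. The delicate point is the exact bookkeeping of the normalization constants $c_n$ and $A_n^m$ together with the Condon--Shortley phase built into the definition (\ref{sphericalharmonics}) of $Y_n^m$; this is precisely what produces the factor $(-1)^{\,n-|m|+|m'|-|m'-m|}c_{n'}^2A_n^mA_{n'-n}^{m'-m}/(c_n^2c_{n'-n}^2A_{n'}^{m'})$, and I expect this sign-and-factorial accounting, rather than the structural reduction, to be the principal obstacle. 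As an alternative one may apply the Kelvin inversion $\bs x\mapsto\bs x/|\bs x|^2$, which interchanges the regular solid harmonics $R_n^m$ with the singular solid harmonics $Y_n^m/|\bs x|^{\,n+1}$ occurring in Theorems \ref{theorem:firstaddition} and \ref{theorem:secondaddition}, and transport the already-known constants from there; one must then track carefully how the shift $\bs r\mapsto\bs r-\bs q$ behaves under inversion.
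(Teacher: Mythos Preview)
The paper does not prove this statement; Theorems \ref{addthmleg}--\ref{theorem:fourthaddition} are quoted from the literature (the references to Greengard--Rokhlin and Epton--Dembart appearing just before Theorem \ref{addthmleg}), so there is no in-paper argument to compare against.

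Your structural reduction is correct and is one of the standard proofs of this identity: $R_{n'}^{m'}(\bs r-\bs q)$ is indeed a harmonic polynomial of degree $n'$ in $\bs r$ and, separately, in $\bs q$, jointly homogeneous of degree $n'$; the unique decomposition into homogeneous harmonic pieces together with the $z$-axis rotation argument pins down the expansion to a single scalar per $(n,m)$, exactly as you say. The relabelling is consistent with the paper's convention $A_k^\mu=0$, $Y_k^\mu\equiv 0$ for $|\mu|>k$, which absorbs the discrepancy between the natural range $|m'-m|\le n'-n$ and the stated range $|m|\le n$. What you have not actually done is the last step: you describe two routes (monomial matching, or transporting constants via Kelvin inversion from Theorem \ref{theorem:firstaddition}/\ref{theorem:secondaddition}) but carry out neither, and you are right that the entire content of the specific constant $(-1)^{n-|m|+|m'|-|m'-m|}c_{n'}^2A_n^mA_{n'-n}^{m'-m}/(c_n^2c_{n'-n}^2A_{n'}^{m'})$ lives in that omitted computation. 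As written, the proposal is a correct outline rather than a complete proof; to finish it cleanly, the monomial-matching route is the more self-contained of the two, since the Kelvin-inversion route presupposes the constants in Theorems \ref{theorem:firstaddition}--\ref{theorem:secondaddition}, which the paper likewise only quotes.
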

In the above theorems, the definition $A_n^m=0$, $Y_n^m(\theta,\varphi)\equiv 0$ for $|m|>n$ is used.

Applying Legendre addition theorem to expansions \eqref{expansionbeforeme} and \eqref{localexpansionbeforeme} gives ME
\begin{equation}\label{mefreespace}
\frac{1}{4\pi|\bs r-\bs r'|}=\sum\limits_{n=0}^{\infty}\sum\limits_{m=-n}^nM_{nm}\frac{Y_n^m(\theta_s,\varphi_s)}{r_s^{n+1}},
\end{equation}
and LE
\begin{equation}\label{lefreespace}
\frac{1}{4\pi|\bs r-\bs r'|}=\sum\limits_{n=0}^{\infty}\sum\limits_{m=-n}^nL_{nm}r^n_tY_n^m(\theta_t,\varphi_t),
\end{equation}
where
\begin{equation}\label{freespaceexpcoef}
M_{nm}=\frac{1}{4\pi c_n^2}r'^n_s\overline{Y_n^{m}(\theta'_s,\varphi'_s)},\quad L_{nm}=\frac{1}{4\pi c_n^2}r'^{-n-1}_t\overline{Y_n^{m}(\theta'_t,\varphi'_t)}.
\end{equation}

Further, applying Theorem \ref{theorem:secondaddition} in ME \eqref{mefreespace} provides a translation from ME \eqref{mefreespace} to LE \eqref{lefreespace} which is given by
\begin{equation}\label{metole}
L_{nm}=\sum\limits_{\nu=0}^{\infty}\sum\limits_{\mu=-\nu}^{\nu}\frac{(-1)^{\nu+|m|}A_{\nu}^{\mu}A_n^mY_{n+\nu}^{\mu-m}(\theta_{st}, \varphi_{st})}{c_{\nu}^2A_{n+\nu}^{\mu-m}r_{st}^{n+\nu+1}}M_{\nu\mu},
\end{equation}
where $(r_{st}, \theta_{st}, \varphi_{st})$ is the spherical coordinate of $\bs r_c^s-\bs r_c^t$.

Given two new centers $\tilde{\bs r}_c^s$ and $\tilde{\bs r}_c^t$ close to $\bs r_c^s$ and $\bs r_c^t$, respectively. By using the addition Theorems \ref{theorem:firstaddition} and \ref{theorem:fourthaddition} in \eqref{mefreespace}-\eqref{lefreespace} and rearranging terms in the results, we obtain
\begin{equation*}
\begin{split}
&\sum\limits_{\nu=0}^{\infty}\sum\limits_{\mu=-\nu}^{\nu}M_{\nu\mu}\frac{Y_{\nu}^{\mu}(\theta_s,\varphi_s)}{r_s^{\nu+1}}\\
=&\sum\limits_{\nu=0}^{\infty}\sum\limits_{\mu=-\nu}^{\nu}\sum\limits_{n'=0}^{\infty}\sum\limits_{m'=-n'}^{n'}M_{\nu\mu}\frac{(-1)^{|m'+\mu|-|\mu|}A_{n'}^{m'}A_{\nu}^{\mu}r_{ss}^{n'}Y_{n'}^{-m'}(\theta_{ss},\varphi_{ss})}{c_{n'}^2A_{n'+\nu}^{m'+\mu}}\frac{Y_{n'+\nu}^{m'+\mu}(\tilde\theta_s, \tilde\varphi_s)}{\tilde r_s^{n'+\nu+1}}\\
=&\sum\limits_{n=0}^{\infty}\sum\limits_{m=-n}^{n}\sum\limits_{\nu=0}^{n}\sum\limits_{\mu=-\nu}^{\nu}M_{\nu\mu}\frac{(-1)^{|m|-|\mu|}A_{n-\nu}^{m-\mu}A_{\nu}^{\mu}r_{ss}^{n-\nu}Y_{n-\nu}^{\mu-m}(\theta_{ss},\varphi_{ss})}{c_{n-\nu}^2A_{n}^{m}}\frac{Y_{n}^{m}(\tilde\theta_s, \tilde\varphi_s)}{\tilde r_s^{n+1}},
\end{split}
\end{equation*}
and
\begin{equation*}
\begin{split}
&\sum\limits_{\nu=0}^{\infty}\sum\limits_{\mu=-\nu}^{\nu}L_{\nu\mu}r^{\nu}_tY_{\nu}^{\mu}(\theta_t,\varphi_t)\\
=&\sum\limits_{\nu=0}^{\infty}\sum\limits_{\mu=-\nu}^{\nu}\sum\limits_{n'=0}^{\nu}\sum\limits_{m'=-n'}^{n'}L_{\nu\mu} \frac{(-1)^{n'-|m'|+|\mu|-|\mu-m'|}c_{\nu}^2A_{n'}^{m'}A_{\nu-n'}^{\mu-m'}r_{tt}^{n'}Y_{n'}^{m'}(\theta_{tt},\varphi_{tt})}{c_{n'}^2c_{\nu-n'}^2A_{\nu}^{\mu}\tilde r_t^{n'-\nu}}Y_{\nu-n'}^{\mu-m'}(\tilde\theta_t, \tilde\varphi_t)\\
=&\sum\limits_{n=0}^{\infty}\sum\limits_{m=-n}^{n}\sum\limits_{\nu=n}^{\infty}\sum\limits_{\mu=-\nu}^{\nu}L_{\nu\mu}\frac{(-1)^{\nu-n-|\mu-m|+|\mu|-|m|}c_{\nu}^2A_{\nu-n}^{\mu-m}A_{n}^{m}r_{tt}^{\nu-n}Y_{\nu-n}^{\mu-m}(\theta_{tt},\varphi_{tt})}{c_{\nu-n}^2c_{n}^2A_{\nu}^{\mu}}\tilde r_t^{n}Y_{n}^{m}(\tilde\theta_t, \tilde\varphi_t),
\end{split}
\end{equation*}
where $(\tilde r_s, \tilde\theta_s, \tilde\varphi_s)$, $(\tilde r_t, \tilde\theta_t, \tilde\varphi_t)$, $(r_{ss}, \theta_{ss}, \varphi_{ss})$ and $(r_{tt},\theta_{tt}, \varphi_{tt})$ are the spherical coordinates of $\bs r-\tilde{\bs r}_c^s$, $\bs r-\tilde{\bs r}_c^t$, $\bs r_c^s-\tilde{\bs r}_c^s$ and $\bs r_c^t-\tilde{\bs r}_c^t$, $(\tilde r_s, \tilde\theta_s, \tilde\varphi_s)$. The above formulas implies that the coefficients
\begin{equation}
\tilde{M}_{nm}=\frac{1}{4\pi c_n^2}\tilde r'^n_s\overline{Y_{n}^{m}(\tilde\theta'_s,\tilde\varphi'_s)},\quad \tilde L_{nm}=\frac{1}{4\pi c_n^2}\tilde r'^{-n-1}_t\overline{Y_{n}^{m}(\tilde\theta'_t,\tilde\varphi'_t)},
\end{equation}
of the shifted ME and LE at new centers $\tilde{\bs r}_c^t$ and $\tilde{\bs r}_c^{s}$ can be obtained via center shifting
\begin{align}
\displaystyle\tilde{M}_{nm}
=&\sum\limits_{\nu=0}^{n}\sum\limits_{\mu=-\nu}^{\nu}\frac{(-1)^{|m|-|\mu|}A_{n-\nu}^{m-\mu}A_{\nu}^{\mu}r_{ss}^{n-\nu}Y_{n-\nu}^{\mu-m}(\theta_{ss},\varphi_{ss})}{c_{n-\nu}^2A_{n}^{m}}M_{\nu\mu},\label{metome}\\
\displaystyle\tilde L_{nm}=&\sum\limits_{\nu=n}^{\infty}\sum\limits_{\mu=-\nu}^{\nu} \frac{(-1)^{\nu-n-|\mu-m|+|\mu|-|m|}c_{\nu}^2A_{\nu-n}^{\mu-m}A_n^mr_{tt}^{\nu-n}Y_{\nu-n}^{\mu-m}(\theta_{tt}, \varphi_{tt})}{c_{\nu-n}^2c_n^2A_{\nu}^{\mu}}L_{\nu\mu}.\label{letole}
\end{align}

Besides using the addition theorems, we have proposed a new derivation for \eqref{mefreespace} and \eqref{lefreespace} by using the integral representation of $1/|\bs r-\bs r'|$.  Moreover, the methodology has been further applied to derive multipole and local expansions for the reaction components of the Green's function in layered media (cf. \cite{wang2019fastlaplace}).

\subsection{Multipole expansions for general reaction component} Consider a general reaction component $u_{\ell\ell'}^{\mathfrak{ab}}(\bs r, \bs r')$ given in \eqref{generalcomponents}. By inserting the source center $\bs r^s_c=(x_c^{s},y_c^{s}, z_c^{s} )$, the exponential kernels in \eqref{generalcomponents} have source/target separations
\begin{equation}\label{sourcetargetseparationsc1}
e^{\ri\bs k\cdot\tau^{\mathfrak a 1}_{\ell\ell'}(\bs r, \bs r')}=e^{\ri\bs k\cdot\bs\tau_{\ell\ell'}^{\mathfrak a 1}(\bs r, \bs r^s_c)}e^{\ri\bs k\cdot\bs\tau(-\bs r_s')}\quad
e^{\ri\bs k\cdot\bs\tau_{\ell\ell'}^{\mathfrak a 2}(\bs r, \bs r')}=e^{\ri\bs k\cdot\bs\tau_{\ell\ell'}^{\mathfrak a 2}(\bs r, \bs r^s_c)}e^{-\ri\bs k\cdot\bs r_s'},\quad \mathfrak a, \mathfrak b=1, 2.
\end{equation}
 Here, $\bs r_s'=\bs r'-\bs r_c^s$, $\bs\tau(\bs r)=(x, y, -z)$ is the reflection of any $\bs r=(x, y, z)\in \mathbb R^3$ according to $xy$-plane. Obviously, the reflection $\bs\tau(\bs r)$ satisfies
\begin{equation}\label{reflection}
|\bs\tau(\bs r)|=|\bs r|,\quad \bs\tau(\bs r+\bs r')=\bs\tau(\bs r)+\bs\tau(\bs r'),\quad\bs\tau(a\bs r)=a\bs\tau(\bs r),\quad \forall \bs r,\bs r'\in\mathbb R^3, \;\forall a\in\mathbb R.
\end{equation}
Moreover, applying source/target separations \eqref{sourcetargetseparationsc1} and the Taylor expansions
\begin{equation*}
\begin{split}
e^{\ri\bs k\cdot\bs\tau^{\mathfrak a 1}_{\ell\ell'}(\bs r, \bs r')}=e^{\ri\bs k\cdot\bs\tau_{\ell\ell'}^{\mathfrak a 1}(\bs r, \bs r^s_c)}\sum\limits_{n=0}^{\infty}\frac{[\ri\bs k\cdot\tau(-\bs r_s')]^n}{n!},\quad
e^{\ri\bs k\cdot\bs\tau_{\ell\ell'}^{\mathfrak a 2}(\bs r, \bs r')}=e^{\ri\bs k\cdot\bs\tau_{\ell\ell'}^{\mathfrak a 2}(\bs r, \bs r^s_c)}\sum\limits_{n=0}^{\infty}\frac{[-\ri\bs k\cdot\bs r_s']^n}{n!},
\end{split}
\end{equation*}
in \eqref{generalcomponents}  gives expansions
\begin{equation}\label{melayerupgoingtaylor}
\begin{split}
u_{\ell\ell'}^{\mathfrak{a}1}(\bs r, \bs r')=&\sum\limits_{n=0}^{\infty}\frac{1}{8\pi^2 }\int_{-\infty}^{\infty}\int_{-\infty}^{\infty}k_{\rho}^{n-1}e^{\ri\bs k\cdot\bs\tau_{\ell\ell'}^{\mathfrak a 1}(\bs r, \bs r^s_c)}\frac{[\ri\bs k\cdot{\bs\tau(-\bs r_s')}]^n}{n!}\sigma_{\ell\ell'}^{\mathfrak{a}1}(k_{\rho})dk_x dk_y, \\
u_{\ell\ell'}^{\mathfrak{a}2}(\bs r, \bs r')=&\sum\limits_{n=0}^{\infty}\frac{1}{8\pi^2 }\int_{-\infty}^{\infty}\int_{-\infty}^{\infty}k_{\rho}^{n-1}e^{\ri\bs k\cdot\bs\tau_{\ell\ell'}^{\mathfrak a 2}(\bs r, \bs r^s_c)}\frac{[-\ri\bs k\cdot{\bs r}_s']^n}{n!}\sigma_{\ell\ell'}^{\mathfrak{a}2}(k_{\rho})dk_x dk_y,
\end{split}
\end{equation}
for $\mathfrak a=1, 2$. Here, we have directly exchanged the order of the infinite summations and improper integrals. Rigorous theoretical proof will be presented in Theorem \ref{Thm:reactmeconvergence}.

To derive ME for the general reaction component $u_{\ell\ell'}^{\mathfrak{ab}}(\bs r, \bs r')$, we shall use the following limit version of the extended Legendre addition theorem (cf. \cite{wang2019fastlaplace}).
\begin{theorem}\label{lemma3}
	Let $\bs k_0=(\cos\alpha, \sin\alpha, \ri)$ be a vector with complex entry, $\theta, \varphi$ be the azimuthal angle and polar angles of a unit vector $\hat{\bs r}$. Then
	\begin{equation}\label{polynomialadd}
	\frac{(\ri\bs k_0\cdot\hat{\bs r})^n}{n!}=\sum\limits_{m=-n}^nC_n^m\widehat P_n^m(\cos\theta)e^{\ri m(\alpha-\varphi)},
	\end{equation}
	where
	\begin{equation}\label{constantcnm}
	C_n^m=\ri^{2n-m}\sqrt{\frac{4\pi}{(2n+1)(n+m)!(n-m)!}}.
	\end{equation}
\end{theorem}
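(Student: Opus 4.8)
The plan is to exploit that $\bs k_0=(\cos\alpha,\sin\alpha,\ri)$ is a \emph{null} vector: $\bs k_0\cdot\bs k_0=\cos^2\alpha+\sin^2\alpha+\ri^2=0$. Hence, for every $\bs r\in\mathbb R^3$,
\begin{equation*}
\boldsymbol{\Delta}\big[(\ri\bs k_0\cdot\bs r)^n\big]=n(n-1)\,\ri^{2}(\bs k_0\cdot\bs k_0)(\ri\bs k_0\cdot\bs r)^{n-2}=0 ,
\end{equation*}
so $(\ri\bs k_0\cdot\bs r)^{n}$ is a harmonic homogeneous polynomial of degree $n$, and its restriction $(\ri\bs k_0\cdot\hat{\bs r})^{n}$ to the unit sphere is a spherical harmonic of degree $n$. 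As $\{Y_n^m(\theta,\varphi)\}_{m=-n}^{n}$ is a basis of that $(2n+1)$-dimensional space, we may write
\begin{equation*}
\frac{(\ri\bs k_0\cdot\hat{\bs r})^{n}}{n!}=\sum_{m=-n}^{n}\kappa_n^m(\alpha)\,Y_n^m(\theta,\varphi)
\end{equation*}
with coefficients that depend only on the parameter $\alpha$ entering $\bs k_0$.

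Next I would pin down the $\alpha$-dependence by symmetry. Since $\bs k_0\cdot\hat{\bs r}=\sin\theta\cos(\varphi-\alpha)+\ri\cos\theta$, the left-hand side depends on $(\alpha,\varphi)$ only through $\varphi-\alpha$; combined with the azimuthal covariance $Y_n^m(\theta,\varphi+t)=e^{\ri mt}Y_n^m(\theta,\varphi)$, this forces $\kappa_n^m(\alpha)=\kappa_n^m\,e^{-\ri m\alpha}$ for constants $\kappa_n^m$. Using $Y_n^m=\widehat P_n^m(\cos\theta)e^{\ri m\varphi}$ and relabelling $m\mapsto-m$ in the sum (which, together with $\widehat P_n^{-m}=(-1)^m\widehat P_n^m$ from \eqref{sphericalharmonics}, merely renames the constants), the expansion takes exactly the shape of \eqref{polynomialadd},
\begin{equation*}
\frac{(\ri\bs k_0\cdot\hat{\bs r})^{n}}{n!}=\sum_{m=-n}^{n}\widetilde C_n^m\,\widehat P_n^m(\cos\theta)\,e^{\ri m(\alpha-\varphi)} ,
\end{equation*}
and the proof reduces to showing $\widetilde C_n^m=C_n^m$ with $C_n^m$ as in \eqref{constantcnm}.

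For the constants I would first do the base case $m=n$: writing $\ri\bs k_0\cdot\hat{\bs r}=\frac{\ri}{2}\sin\theta\big(e^{\ri(\varphi-\alpha)}+e^{-\ri(\varphi-\alpha)}\big)-\cos\theta$ and extracting the coefficient of $e^{\ri n\varphi}$ in its $n$-th power, only $\big(\frac{\ri}{2}\sin\theta\,e^{\ri(\varphi-\alpha)}\big)^{n}$ contributes, giving $\ri^{n}2^{-n}\sin^{n}\theta\,e^{-\ri n\alpha}$; comparing with $\widetilde C_n^{n}e^{-\ri n\alpha}\widehat P_n^{n}(\cos\theta)$ and using $\widehat P_n^{n}(\cos\theta)=\frac{1}{2^{n}n!}\sqrt{(2n+1)(2n)!/(4\pi)}\;\sin^{n}\theta$ gives $\widetilde C_n^{n}=\ri^{n}\sqrt{4\pi/((2n+1)(2n)!)}$, which is \eqref{constantcnm} at $m=n$. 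For general $m$ I would multiply the identity by $t^{n}$ and sum over $n$: the left-hand side becomes $e^{\ri t\bs k_0\cdot\hat{\bs r}}=e^{-t\cos\theta}\sum_{k}\ri^{k}J_{k}(t\sin\theta)e^{\ri k(\varphi-\alpha)}$ by the Jacobi--Anger expansion, and matching the coefficients of $e^{\ri m(\alpha-\varphi)}$ on the two sides reduces the claim to the classical generating-function identity $\sum_{n\ge|m|}\widetilde C_n^{m}\widehat P_n^{m}(\cos\theta)\,t^{n}=\ri^{m}J_{m}(t\sin\theta)e^{-t\cos\theta}$, which is verified by comparing Taylor coefficients in $t$ with the standard power series of $P_n^m$. (Alternatively, for fixed $m$ a recursion on $n$ obtained from $(\ri\bs k_0\cdot\hat{\bs r})^{n}=(\ri\bs k_0\cdot\hat{\bs r})\cdot(\ri\bs k_0\cdot\hat{\bs r})^{n-1}$ together with the three-term recurrence for $\widehat P_n^m$ can be used.)

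The step I expect to be the main obstacle is this last one: the conceptual core --- null vector $\Rightarrow$ degree-$n$ spherical harmonic $\Rightarrow$ $\alpha$-dependence fixed by rotational symmetry --- is short, whereas identifying $C_n^m$ for all $m$ requires careful bookkeeping of the normalization of $\widehat P_n^m$ (including the Condon--Shortley phase built into \eqref{sphericalharmonics}) and of every power of $\ri$; the explicit $m=n$ computation above is exactly the check that the factorials and the power $\ri^{2n-m}$ in \eqref{constantcnm} come out right.
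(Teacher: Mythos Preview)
The paper does not actually prove this theorem: it is quoted, with the remark that it is ``the following limit version of the extended Legendre addition theorem (cf.\ \cite{wang2019fastlaplace}),'' and no argument is given here. So there is no in-paper proof to compare against; the intended argument (in the cited reference) presumably obtains \eqref{polynomialadd} as a degenerate/limiting case of the Legendre addition theorem (Theorem~\ref{addthmleg}) by letting the second direction become the complex null direction $\bs k_0$.

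Your route is genuinely different and conceptually clean: exploiting $\bs k_0\cdot\bs k_0=0$ to see at once that $(\ri\bs k_0\cdot\hat{\bs r})^n$ lies in the degree-$n$ spherical-harmonic subspace, and using the $\varphi\mapsto\varphi-\alpha$ covariance to pin down the $\alpha$-dependence, is exactly the right structural explanation for why the identity has the shape \eqref{polynomialadd}. The $m=n$ check you carry out is correct (note only that the Fourier mode $e^{\ri n\varphi}$ corresponds to $m=-n$ in the target sum, not $m=n$; by the parity $\widehat P_n^{-m}=(-1)^m\widehat P_n^m$ and $C_n^{-m}=(-1)^m C_n^m$ this is harmless, but worth stating cleanly).

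The honest gap is the one you flag yourself: determining $C_n^m$ for all $m$. Your generating-function idea is sound, but be aware that the equality
\[
\sum_{n\ge |m|} C_n^m\,\widehat P_n^m(\cos\theta)\,t^n=\ri^m J_m(t\sin\theta)\,e^{-t\cos\theta}
\]
still has to be verified, and ``comparing Taylor coefficients with the standard power series of $P_n^m$'' is essentially the same bookkeeping you are trying to avoid. A shorter closure, since you already have the $m=\pm n$ anchors, is your alternative recursion: multiplying the degree-$(n-1)$ identity by $\ri\bs k_0\cdot\hat{\bs r}=-\cos\theta+\tfrac{\ri}{2}\sin\theta(e^{\ri(\varphi-\alpha)}+e^{-\ri(\varphi-\alpha)})$ and using the three relations $\cos\theta\,\widehat P_{n-1}^m$, $\sin\theta\,\widehat P_{n-1}^{m\pm1}$ in terms of $\widehat P_n^m$ gives a first-order recursion in $n$ for the coefficients at fixed $m$, which together with the base case $n=|m|$ forces $C_n^m=\ri^{2n-m}\sqrt{4\pi/((2n+1)(n+m)!(n-m)!)}$. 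That would make the argument self-contained.
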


%We give some numerical results to validate the Funk-Hecke formula in complex plane. Here, we set $k=1.8$, $\alpha=\frac{\pi}{4}$. The error is defined as
%\begin{equation}
%err=\frac{\Big|e^{\ri\bs k\cdot\bs r}-\sum\limits_{n=0}^{p}\sum\limits_{m=-n}^n \overline{A_{n}^m(\bs r)}\ri^n\widehat{P}_n^m\Big(\frac{k_z}{k}\Big)e^{\ri m\alpha}\Big|}{|e^{\ri\bs k\cdot\bs r}|}.
%\end{equation}
%The errors and convergence rates against $p$ are depicted in Fig. \ref{FunkHeckeApperr}. Spectral convergence rate against $p$ is observed for $k_z\in\mathbb C$.
%\begin{figure}[ht!]
%	\center
%	\subfigure[$\bs r=(0.3, 0.4, 0.5)$, $p=18$]{\includegraphics[scale=0.21]{funkheckeapp1}}\quad
%	\subfigure[$\bs r=(0.3, 0.4, -0.5)$, $p=18$]{\includegraphics[scale=0.21]{funkheckeapp2}}\quad
%	\subfigure[$\bs r=(0.3, 0.4, 0.5)$]{\includegraphics[scale=0.24]{funkheckeconvergence}}
%	\caption{Errors of the approximation using Funk-Hecke formula.}
%	\label{FunkHeckeApperr}%
%\end{figure}

By applying Theorem \ref{lemma3} in expansions \eqref{melayerupgoingtaylor}  and then using identities
\begin{equation}\label{sphrelations}
Y_n^{m}(\pi-\theta,\varphi)=(-1)^{n+m}Y_n^{m}(\theta,\varphi),\quad Y_n^{m}(\theta,\pi+\varphi)=(-1)^{m}Y_n^{m}(\theta,\varphi),
\end{equation}
we obtain MEs
\begin{equation}\label{melayerupgoingimage1}
\begin{split}
u_{\ell\ell'}^{\mathfrak{ab}}(\bs r, \bs r')=\sum\limits_{n=0}^{\infty}\sum\limits_{m=-n}^{n}  M_{nm}^{\mathfrak{ab}}{\mathcal F}_{nm}^{\mathfrak{ab}}(\bs r, \bs r_c^s), \quad M_{nm}^{\mathfrak{ab}}=\frac{1}{4\pi c_n^2} r_s'^n\overline{Y_n^{m}(\theta_s',\varphi_s')},
\end{split}
\end{equation}
at source centers $\bs r_c^s$. Here, ${\mathcal F}_{nm}^{\mathfrak{ab}}(\bs r, \bs r_c^s)$ are represented by Sommerfeld-type integrals
\begin{equation}\label{mebasis1}
\begin{split}
{\mathcal F}_{nm}^{\mathfrak a1}(\bs r, \bs r_c^s)=&\frac{(-1)^{m}c_n^2C_n^m}{2\pi}\int_{-\infty}^{\infty}\int_{-\infty}^{\infty}e^{\ri\bs k\cdot\bs\tau_{\ell\ell'}^{\mathfrak a1}(\bs r, \bs r_c^s)}\sigma_{\ell\ell'}^{1\mathfrak b}(k_{\rho})k_{\rho}^{n-1}e^{\ri m\alpha}dk_x dk_y,\\
{\mathcal F}_{nm}^{\mathfrak a2}(\bs r, \bs r_c^s)=&\frac{(-1)^nc_n^2C_n^m}{2\pi}\int_{-\infty}^{\infty}\int_{-\infty}^{\infty}e^{\ri\bs k\bs\tau_{\ell\ell'}^{\mathfrak a2}(\bs r, \bs r_c^s)}\sigma_{\ell\ell'}^{2\mathfrak b}(k_{\rho})k_{\rho}^{n-1}e^{\ri m\alpha}dk_x dk_y.
\end{split}
\end{equation}

\section{Exponential convergence of the MEs and LEs, shifting and translation operators}
In this section, we prove the exponential convergence of the approximations used in the FMM for 3-dimensional Laplace equation in layered media. Let $\mathscr{P}_{\ell}=\{(Q_{\ell j},\boldsymbol{r}_{\ell j}),$ $j=1,2,\cdots
,N_{\ell}\}$, $\ell=0, 1, \cdots, L$ be $L$ groups of source charges distributed in a multi-layer medium with $L+1$ layers (see Fig. \ref{layerstructure}). The group of charges in $\ell$-th layer is denoted by $\mathscr{P}_{\ell}$. The FMM provides a fast algorithm to compute interactions
\begin{equation}\label{totalinteraction}
\begin{split}
\Phi_{\ell}(\boldsymbol{r}_{\ell i})
=&\Phi_{\ell}^{\text{free}}(\boldsymbol{r}_{\ell i})+\sum\limits_{\ell^{\prime}=0}^{L-1}[\Phi_{\ell\ell^{\prime}}^{11
}(\boldsymbol{r}_{\ell i})+\Phi_{\ell\ell^{\prime}}^{21
}(\boldsymbol{r}_{\ell i})]+\sum\limits_{\ell^{\prime}=1}^{L}[\Phi_{\ell\ell^{\prime}}^{12
}(\boldsymbol{r}_{\ell i})+\Phi_{\ell\ell^{\prime}}^{22
}(\boldsymbol{r}_{\ell i})],
\end{split}
\end{equation}
where
\begin{equation}\label{freereactioncomponents}
\begin{split}
&  \Phi_{\ell}^{\text{free}}(\boldsymbol{r}_{\ell i}):=\sum\limits_{j=1,j\neq
	i}^{N_{\ell}}\frac{Q_{\ell j}}{4\pi|\boldsymbol{r}_{\ell
		i}-\boldsymbol{r}_{\ell j}|},\quad  \Phi_{\ell\ell^{\prime}}^{\mathfrak{ab}}(\boldsymbol{r}_{\ell i}):=\sum
\limits_{j=1}^{N_{\ell^{\prime}}}Q_{\ell^{\prime}j}u_{\ell\ell^{\prime}%
}^{\mathfrak{ab}}(\boldsymbol{r}_{\ell i},\boldsymbol{r}_{\ell^{\prime}j}%
),
\end{split}
\end{equation}
are free space and reaction field components, respectively. Far field approximations are used for both free space and reaction field components. Below, we first review the well-known theoretical results of the FMM for the free space components. It is mostly for the integrity of the theory and comparison with the convergence results that we will prove for the approximations of the reaction field components.

\subsection{Exponential convergence of ME and LE, shifting and translation operators for the free space components} Let $\Phi_{\ell,{\rm in}}^{\rm free}(\bs r)$ and $\Phi_{\ell,{\rm out}}^{\rm free}(\bs r)$ be the free space components of the potentials induced by all particles inside a given source box $B_s$ centered at $\bs r_c^s$ and all particles far away from a given target box $B_t$ centered at $\bs r_c^t$ (see. Fig. \ref{freespacemeandle}), i.e.,
\begin{equation}
\Phi_{\ell,{\rm in}}^{\rm free}(\bs r)=\sum\limits_{j\in \mathcal J}\frac{Q_{\ell j}}{4\pi|\boldsymbol{r}-\boldsymbol{r}_{\ell j}|},\quad \Phi_{\ell,{\rm out}}^{\rm free}(\bs r)=\sum\limits_{j\in \mathcal K}\frac{Q_{\ell j}}{4\pi|\boldsymbol{r}-\boldsymbol{r}_{\ell j}|},
\end{equation}
where $\mathcal J$ and $\mathcal K$ are the sets of indices of particles inside $B_s$ and of particles far away from $B_t$, respectively.
The FMM for free space components use ME
\begin{equation}\label{meapprox}
\Phi_{\ell,{\rm in}}^{\rm free}(\bs r)=\sum\limits_{n=0}^{\infty}\sum\limits_{m=-n}^nM_{nm}^{\rm in}\frac{Y_n^m(\theta_s,\varphi_s)}{r_s^{n+1}},
\end{equation}
at any target points far away from $B_s$ and LE
\begin{equation}\label{leapprox}
\Phi_{\ell,{\rm out}}^{\rm free}(\bs r)=\sum\limits_{n=0}^{\infty}\sum\limits_{m=-n}^nL_{nm}^{\rm out}r_t^nY_n^m(\theta_t,\varphi_t),
\end{equation}
inside $B_t$, where $(r_s, \theta_s, \phi_s)$ and $(r_t, \theta_t, \phi_t)$ are spherical coordinates of $\bs r-\bs r_c^s$ and $\bs r-\bs r_c^t$, respectively. The coefficients are given by
\begin{equation}\label{freespaceexpcoefbox}
M_{nm}^{\rm in}=\frac{c_n^{-2}}{4\pi }\sum\limits_{j\in\mathcal J}Q_{\ell j}(r_{\ell j}^s)^n\overline{Y_n^{m}(\theta_{\ell j}^s,\varphi_{\ell j}^s)},\quad L_{nm}^{\rm out}=\frac{c_n^{-2}}{4\pi }\sum\limits_{j\in\mathcal K}Q_{\ell j}(r_{\ell j}^t)^{-n-1}\overline{Y_n^{m}(\theta_{\ell j}^t,\varphi_{\ell j}^t)},
\end{equation}
where $(r_{\ell j}^s, \theta_{\ell j}^s, \phi_{\ell j}^s)$ and $(r_{\ell j}^t, \theta_{\ell j}^t, \phi_{\ell j}^t)$ are spherical coordinates of $\bs r_{\ell j}-\bs r_c^s$ and $\bs r_{\ell j}-\bs r_c^t$, respectively.
These expansions can be obtained by applying expansions \eqref{mefreespace}-\eqref{lefreespace} to the free space Green's function involved in the summation \eqref{freereactioncomponents}. By using Legendre addition theorem and estimates \eqref{meerror} and \eqref{leerror}, there holds the following error estimates (cf. \cite{grengard1988rapid}).
\begin{figure}[ht!]\label{freespacemeandle}
	\centering
	\includegraphics[scale=0.8]{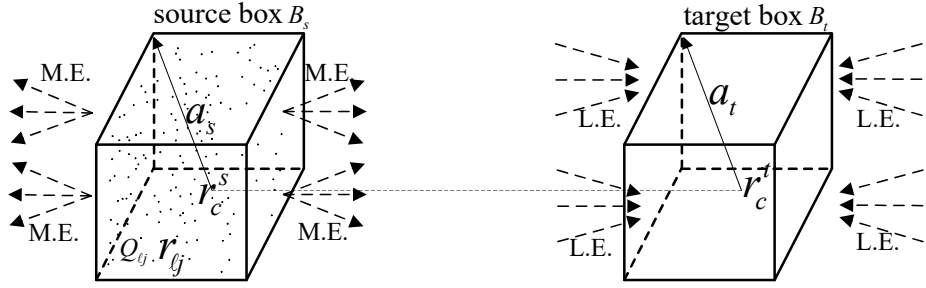}
	\caption{An illustration of the source and target box for the free space component in the $\ell$-th layer.}
\end{figure}
\begin{theorem}\label{meconvergence}
	Denote the radius of the circumscribed sphere of the source box $B_s$ by $a_s$. Then, the ME \eqref{meapprox} has error estimate
	\begin{equation}
	\Big|\Phi_{\ell,{\rm in}}^{\rm free}(\bs r)-\sum\limits_{n=0}^{p}\sum\limits_{m=-n}^nM_{nm}^{\rm in}\frac{Y_n^m(\theta_s,\varphi_s)}{r_s^{n+1}}\Big|\leq\frac{1}{4\pi}\frac{Q_{\mathcal J}}{r_s-a_s}\Big(\frac{a_s}{r_s}\Big)^{p+1},\quad\forall p\geq 1,
	\end{equation}
	for any $\bs r$ outside the circumscribed sphere, i.e., $|\bs r-\bs r_c^s|>a_s$, where
	\begin{equation}\label{totalchargefree}
	Q_{\mathcal J}=\sum\limits_{j\in\mathcal J}|Q_{\ell j}|.
	\end{equation}
\end{theorem}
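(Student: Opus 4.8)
The plan is to reduce the statement to the scalar estimate \eqref{meerror} already established for the free-space Green's function of a single charge. First I would write out the error of the truncated multipole expansion by subtracting \eqref{meapprox} from the definition of $\Phi_{\ell,{\rm in}}^{\rm free}(\bs r)$ and, using the fact that the expansion coefficients $M_{nm}^{\rm in}$ in \eqref{freespaceexpcoefbox} are simply the charge-weighted sums of the single-source coefficients $M_{nm}$ in \eqref{freespaceexpcoef}, interchange the finite sum over $n$ with the sum over $j\in\mathcal J$. This gives
\begin{equation*}
\Phi_{\ell,{\rm in}}^{\rm free}(\bs r)-\sum\limits_{n=0}^{p}\sum\limits_{m=-n}^nM_{nm}^{\rm in}\frac{Y_n^m(\theta_s,\varphi_s)}{r_s^{n+1}}=\sum\limits_{j\in\mathcal J}Q_{\ell j}\left(\frac{1}{4\pi|\bs r-\bs r_{\ell j}|}-\frac{1}{4\pi}\sum\limits_{n=0}^p\frac{P_n(\cos\gamma_{s,j})}{r_s}\Big(\frac{r_{\ell j}^s}{r_s}\Big)^n\right),
\end{equation*}
where the inner parenthesized quantity is exactly the left-hand side of \eqref{meerror} with $\bs r'$ replaced by $\bs r_{\ell j}$, $r_s'$ by $r_{\ell j}^s$, and $\gamma_s$ by the corresponding angle $\gamma_{s,j}$; this rewriting uses the Legendre addition Theorem~\ref{addthmleg} to collapse the double $(n,m)$ sum back into the $P_n(\cos\gamma_{s,j})$ form.

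Next I would apply the triangle inequality over $j\in\mathcal J$ and invoke \eqref{meerror} term by term. Each source $\bs r_{\ell j}$ lies inside $B_s$, hence inside the circumscribed sphere of radius $a_s$, so $r_{\ell j}^s=|\bs r_{\ell j}-\bs r_c^s|\le a_s$; and by hypothesis $|\bs r-\bs r_c^s|=r_s>a_s\ge r_{\ell j}^s$, so the convergence condition $r_s>r_{\ell j}^s$ of \eqref{meerror} is satisfied. Therefore
\begin{equation*}
\left|\frac{1}{4\pi|\bs r-\bs r_{\ell j}|}-\frac{1}{4\pi}\sum\limits_{n=0}^p\frac{P_n(\cos\gamma_{s,j})}{r_s}\Big(\frac{r_{\ell j}^s}{r_s}\Big)^n\right|\le\frac{1}{4\pi(r_s-r_{\ell j}^s)}\Big(\frac{r_{\ell j}^s}{r_s}\Big)^{p+1}.
\end{equation*}
Then I would bound the right-hand side uniformly in $j$ by monotonicity: since $t\mapsto \frac{1}{r_s-t}(t/r_s)^{p+1}$ is increasing on $[0,r_s)$, replacing $r_{\ell j}^s$ by its upper bound $a_s$ enlarges each term to $\frac{1}{4\pi(r_s-a_s)}(a_s/r_s)^{p+1}$. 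Summing over $j\in\mathcal J$ and recognizing $\sum_{j\in\mathcal J}|Q_{\ell j}|=Q_{\mathcal J}$ by \eqref{totalchargefree} yields the claimed bound.

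This argument is essentially bookkeeping: the only genuine ingredients are the single-source estimate \eqref{meerror} (already granted) and the two elementary monotonicity/geometry facts ($r_{\ell j}^s\le a_s<r_s$ and monotonicity of the error bound in the source radius). The one point that needs a little care — and what I would call the ``main obstacle,'' though it is minor — is justifying the interchange of the finite sum $\sum_{n=0}^p\sum_{m=-n}^n$ with the sum over $j$ and the re-collapsing via the addition theorem, i.e. making sure the truncated multipole expansion with box coefficients $M_{nm}^{\rm in}$ is genuinely the $j$-superposition of the truncated single-source expansions; since all sums over $n$ are finite this is purely algebraic and poses no analytic difficulty. I would not need any result about the reaction densities here, as this subsection concerns only the classical free-space part.
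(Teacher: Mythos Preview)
Your proposal is correct and matches the paper's approach: the paper does not spell out a proof but simply notes that the result follows by using the Legendre addition theorem together with the single-source estimate \eqref{meerror} (citing \cite{grengard1988rapid}), which is precisely the reduction you carry out. Your added remarks on the monotonicity of $t\mapsto (r_s-t)^{-1}(t/r_s)^{p+1}$ and the purely algebraic nature of the sum interchange are accurate and fill in the routine details the paper omits.
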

\begin{theorem}\label{leconvergence}
	Denote the radius of the circumscribed sphere of the target box $B_t$ by $a_t$. Suppose $\mathcal K$ is the set of indices of all particles $(Q_{\ell j},\bs r_{\ell j})$ such that $|\bs r_{\ell j}-\bs r_c^t|>a_t$, then the LE \eqref{leapprox} has error estimate
	\begin{equation}
	\Big|\Phi_{\ell,{\rm out}}^{\rm free}(\bs r)-\sum\limits_{n=0}^{p}\sum\limits_{m=-n}^nL_{nm}^{\rm out}r_t^nY_n^m(\theta_t,\varphi_t)\Big|\leq\frac{1}{4\pi}\frac{Q_{\mathcal K}}{a_t-r_t}\Big(\frac{r_t}{a_t}\Big)^{p+1},\quad\forall p\geq 1,
	\end{equation}
	for any $\bs r\in B_t$, where
	\begin{equation}
	Q_{\mathcal K}=\sum\limits_{j\in\mathcal K}|Q_{\ell j}|.
	\end{equation}
\end{theorem}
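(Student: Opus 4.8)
The plan is to reduce the estimate to the single‑particle local‑expansion error bound \eqref{leerror} by means of the Legendre addition theorem, exactly as in the classical free‑space analysis of \cite{grengard1988rapid}; since $\mathcal K$ indexes a finite subset of $\{1,\dots,N_\ell\}$, all the manipulations amount to interchanging a finite sum over $j$ with convergent series in $n$, so no analytic delicacy is involved.

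First I would fix $\bs r\in B_t$ and set $r_t=|\bs r-\bs r_c^t|\le a_t$; if $r_t=a_t$ the asserted bound is vacuous, so we may assume $r_t<a_t$. For each $j\in\mathcal K$ one has $r_{\ell j}^t:=|\bs r_{\ell j}-\bs r_c^t|>a_t\ge r_t$, so the local expansion \eqref{localexpansionbeforeme}, applied with expansion center $\bs r_c^t$, ``source'' point $\bs r_{\ell j}$ and field point $\bs r$, is valid and
\[
\frac{1}{4\pi|\bs r-\bs r_{\ell j}|}=\frac{1}{4\pi}\sum_{n=0}^{\infty}\frac{P_n(\cos\gamma_t^{j})}{r_{\ell j}^t}\Big(\frac{r_t}{r_{\ell j}^t}\Big)^{n},
\]
where $\gamma_t^{j}$ is the angle between $\bs r-\bs r_c^t$ and $\bs r_{\ell j}-\bs r_c^t$. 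Summing $Q_{\ell j}/(4\pi)$ times this identity over $j\in\mathcal K$ and applying the Legendre addition theorem (Theorem \ref{addthmleg}) to each $P_n(\cos\gamma_t^{j})$ recovers the local expansion \eqref{leapprox} with the coefficients $L_{nm}^{\rm out}$ of \eqref{freespaceexpcoefbox} (here one uses $c_n^{-2}=4\pi/(2n+1)$ to match constants).

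The key step is to observe that the addition theorem expresses $P_n(\cos\gamma_t^{j})$ purely in terms of degree‑$n$ spherical harmonics, so truncating the series above at $n=p$ and then performing the same summation over $j$ yields precisely the partial sum $\sum_{n=0}^{p}\sum_{m=-n}^{n}L_{nm}^{\rm out}r_t^nY_n^m(\theta_t,\varphi_t)$; that is, the degree‑$p$ truncation commutes with the passage to the spherical‑harmonic form. Consequently
\[
\Phi_{\ell,{\rm out}}^{\rm free}(\bs r)-\sum_{n=0}^{p}\sum_{m=-n}^{n}L_{nm}^{\rm out}r_t^nY_n^m(\theta_t,\varphi_t)=\sum_{j\in\mathcal K}\frac{Q_{\ell j}}{4\pi}\left[\frac{1}{|\bs r-\bs r_{\ell j}|}-\sum_{n=0}^{p}\frac{P_n(\cos\gamma_t^{j})}{r_{\ell j}^t}\Big(\frac{r_t}{r_{\ell j}^t}\Big)^{n}\right].
\]
Taking absolute values, bounding each bracket by the termwise estimate \eqref{leerror} with $r_t'=r_{\ell j}^t$ (legitimate since $r_t<a_t<r_{\ell j}^t$), and then using $r_{\ell j}^t>a_t$ to replace $r_{\ell j}^t-r_t>a_t-r_t$ and $r_t/r_{\ell j}^t<r_t/a_t$ in every summand gives
\[
\Big|\Phi_{\ell,{\rm out}}^{\rm free}(\bs r)-\sum_{n=0}^{p}\sum_{m=-n}^{n}L_{nm}^{\rm out}r_t^nY_n^m(\theta_t,\varphi_t)\Big|\le\sum_{j\in\mathcal K}\frac{|Q_{\ell j}|}{4\pi(r_{\ell j}^t-r_t)}\Big(\frac{r_t}{r_{\ell j}^t}\Big)^{p+1}\le\frac{1}{4\pi(a_t-r_t)}\Big(\frac{r_t}{a_t}\Big)^{p+1}\sum_{j\in\mathcal K}|Q_{\ell j}|,
\]
which is the claimed bound with $Q_{\mathcal K}=\sum_{j\in\mathcal K}|Q_{\ell j}|$.

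This argument is routine and parallels Theorem \ref{meconvergence} (with $B_t$ and \eqref{leerror} in place of $B_s$ and \eqref{meerror}). The only point that requires a little care — and the nearest thing to an obstacle — is the identification in the third paragraph: verifying that the degree‑$p$ truncation of the Legendre‑polynomial local expansion, after Theorem \ref{addthmleg} is applied and the sum over $j$ is carried out, coincides term by term with the degree‑$p$ truncation of \eqref{leapprox} carrying the coefficients \eqref{freespaceexpcoefbox}. Since $\mathcal K$ is finite, the interchange of the finite $j$‑sum with the spherical‑harmonic series needed for this identification is immediate, so there is in fact no genuine difficulty.
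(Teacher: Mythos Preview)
Your argument is correct and follows essentially the same approach the paper indicates: the paper does not spell out a proof but simply cites the Legendre addition theorem together with the single-particle estimate \eqref{leerror} and refers to \cite{grengard1988rapid}, which is exactly the route you take. The one point you flag---that the degree-$p$ truncation commutes with the passage from the Legendre form to the spherical-harmonic form via Theorem \ref{addthmleg}---is indeed the only thing to check, and you handle it correctly.
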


Let $B_s^{\rm parent}$ be a parent box of the source box $B_s$ and $B_t^{\rm child}$ be a child box of the target box $B_t$ in the tree structure. Denote by $\tilde{\bs r}_c^s$ and $\tilde{\bs r}_c^t$ the centers of $B_s^{\rm parent}$ and $B_t^{\rm child}$, respectively. In the FMM, the shifting operations from the ME \eqref{meapprox} at $\bs r_c^s$ to new ME at $\tilde{\bs r}_c^s$ and from the LE \eqref{leapprox} at $\bs r_c^t$ to new LE at $\tilde{\bs r}_c^t$ are required. Denote the ME and LE at new centers $\tilde{\bs r}_c^s$ and $\tilde{\bs r}_c^t$ by \begin{equation}\label{meleapproxshifting}
\Phi_{\ell,{\rm in}}^{\rm free}(\bs r)=\sum\limits_{n=0}^{\infty}\sum\limits_{m=-n}^n\tilde M_{nm}^{\rm in}\frac{Y_n^m(\tilde\theta_s,\tilde\varphi_s)}{\tilde r_s^{n+1}},\quad
\Phi_{\ell,{\rm out}}^{\rm free}(\bs r)=\sum\limits_{n=0}^{\infty}\sum\limits_{m=-n}^n\tilde L_{nm}^{\rm out}\tilde r_t^nY_n^m(\tilde\theta_t,\tilde\varphi_t).
\end{equation}
Recall shifting operators \eqref{metome}-\eqref{letole}, we have
\begin{align}
\tilde{M}_{nm}^{\rm in}
=&\sum\limits_{\nu=0}^{n}\sum\limits_{\mu=-\nu}^{\nu}\frac{(-1)^{|m|-|\mu|}A_{n-\nu}^{m-\mu}A_{\nu}^{\mu}r_{ss}^{n-\nu}Y_{n-\nu}^{\mu-m}(\theta_{ss},\varphi_{ss})}{c_{n-\nu}^2A_{n}^{m}}M_{\nu\mu}^{\rm in},\label{metomenpart}\\
\tilde L_{nm}^{\rm out}=&\sum\limits_{\nu=n}^{\infty}\sum\limits_{\mu=-\nu}^{\nu} \frac{(-1)^{\nu-n-|\mu-m|+|\mu|-|m|}c_{\nu}^2A_{\nu-n}^{\mu-m}A_n^mr_{tt}^{\nu-n}Y_{\nu-n}^{\mu-m}(\theta_{tt}, \varphi_{tt})}{c_{\nu-n}^2c_n^2A_{\nu}^{\mu}}L_{\nu\mu}^{\rm out},\label{letolenpart}
\end{align}
$(\tilde r_s,\tilde\theta_s,\tilde\varphi_s)$, $(r_{ss},\theta_{ss},\phi_{ss})$, $(\tilde r_t,\tilde\theta_t,\tilde\varphi_t)$ and $(r_{tt},\theta_{tt},\phi_{tt})$ are the spherical coordinates of $\bs r-\tilde{\bs r}_c^s$, $\bs r_c^s-\tilde{\bs r}_c^s$, $\bs r-\tilde{\bs r}_t^s$ and $\tilde{\bs r}_c^t-\bs r_c^t$, respectively. According to the ME to ME translation \eqref{metomenpart}, we see that any ME coefficients $\tilde M_{nm}^{\rm in}$ in the ME at $\tilde{\bs r}_c^s$ can be computed exactly by the ME coefficients  $\{ M_{\nu\mu}^{\rm in}\}_{\nu=0}^n$ in the ME at $\bs r_c^s$. Therefore, the ME obtained via shifting operator \eqref{metomenpart} is actually the the unique ME of $\Phi_{\ell,{\rm in}}^{\rm free}(\bs r)$ at $\tilde{\bs r}_c^s$ (cf. \cite{grengard1988rapid}). As in Theorem \ref{meconvergence}, the following error estimate holds.
\begin{theorem}
	Denote the radius of the circumscribed sphere of the source box $B_s$ by $a_s$. For any $|\bs r-\tilde{\bs r}_c^s|>a_s+r_{ss}$, the first expansion in \eqref{meleapproxshifting} has error estimate
	\begin{equation}
	\Big|\Phi_{\ell,{\rm in}}^{\rm free}(\bs r)-\sum\limits_{n=0}^{p}\sum\limits_{m=-n}^n\tilde M_{nm}^{\rm in}\frac{Y_n^m(\tilde\theta_s,\tilde\varphi_s)}{\tilde r_s^{n+1}}\Big|\leq \frac{1}{4\pi}\frac{Q_{\mathcal J}}{\tilde r_s-(a_s+r_{ss})}\Big(\frac{a_s+r_{ss}}{\tilde r_s}\Big)^{p+1},
	\end{equation}
	where $Q_{\mathcal J}$ is defined in \eqref{totalchargefree}.
\end{theorem}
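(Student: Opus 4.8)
The plan is to reduce the statement to Theorem \ref{meconvergence} by enlarging the source sphere and invoking the exactness of the ME-to-ME shift. First I would record the elementary geometric fact that every source particle contributing to $\Phi_{\ell,{\rm in}}^{\rm free}$ lies inside the ball of radius $a_s+r_{ss}$ centered at the new center $\tilde{\bs r}_c^s$: for each $j\in\mathcal J$ the triangle inequality gives
\begin{equation*}
|\bs r_{\ell j}-\tilde{\bs r}_c^s|\le|\bs r_{\ell j}-\bs r_c^s|+|\bs r_c^s-\tilde{\bs r}_c^s|\le a_s+r_{ss},
\end{equation*}
since $B_s$ is contained in its circumscribed sphere of radius $a_s$ and $r_{ss}=|\bs r_c^s-\tilde{\bs r}_c^s|$.

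Next I would use the exactness of the shift operator \eqref{metomenpart}, already noted just above the statement: because $\tilde M_{nm}^{\rm in}$ is reproduced \emph{exactly} from $\{M_{\nu\mu}^{\rm in}\}_{\nu\le n}$ and the multipole expansion of a harmonic function about a fixed center is unique, the coefficients $\tilde M_{nm}^{\rm in}$ coincide with the genuine ME coefficients of the form \eqref{freespaceexpcoefbox} for the charge set $\{(Q_{\ell j},\bs r_{\ell j})\}_{j\in\mathcal J}$ computed with center $\tilde{\bs r}_c^s$. Hence the truncated sum in the statement is precisely the degree-$p$ truncation of that genuine expansion.

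Then I would apply Theorem \ref{meconvergence} verbatim, now with the source box replaced by the enlarged ball of radius $a_s+r_{ss}$ about $\tilde{\bs r}_c^s$ and with $\tilde r_s=|\bs r-\tilde{\bs r}_c^s|$ playing the role of $r_s$; the hypothesis $\tilde r_s>a_s+r_{ss}$ is exactly the condition $r_s>a_s$ there, and the total charge $Q_{\mathcal J}$ is unchanged. This delivers the claimed bound immediately. Alternatively, and equivalently, one may argue directly without uniqueness: apply the pointwise estimate \eqref{meerror} to each kernel $1/(4\pi|\bs r-\bs r_{\ell j}|)$ expanded about $\tilde{\bs r}_c^s$, with $r_s'=|\bs r_{\ell j}-\tilde{\bs r}_c^s|\le a_s+r_{ss}<\tilde r_s$, multiply by $|Q_{\ell j}|$, sum over $j\in\mathcal J$, and use that $x\mapsto x^{p+1}\big/\big(\tilde r_s^{p+1}(\tilde r_s-x)\big)$ is increasing on $[0,\tilde r_s)$ so that each term is dominated by its value at $x=a_s+r_{ss}$.

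I do not expect any real obstacle here; the only points requiring a line of care are the geometric containment $|\bs r_{\ell j}-\tilde{\bs r}_c^s|\le a_s+r_{ss}$ and the monotonicity in the expansion radius of the one-variable error bound, both of which are routine, while the heavy lifting (the Legendre addition theorem together with $|P_n|\le 1$) has already been done in the proof of Theorem \ref{meconvergence}.
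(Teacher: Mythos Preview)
Your proposal is correct and follows essentially the same approach as the paper: the paper notes just before the statement that the shifted coefficients coincide with the unique ME of $\Phi_{\ell,{\rm in}}^{\rm free}$ at $\tilde{\bs r}_c^s$ and then says ``As in Theorem \ref{meconvergence}, the following error estimate holds,'' which is precisely your reduction via the exactness of the M2M shift, the triangle-inequality containment, and Theorem \ref{meconvergence}.
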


Although, the LE to LE shifting operator \eqref{letolenpart} has an infinite summation, the shifting operation remains exact with finite sum when we are shifting a truncated LE to a new center. In practice, the truncated LE
\begin{equation}
\Phi_{\ell,{\rm out}}^{\rm free}(\bs r)\approx\widetilde\Phi_{\ell,{\rm out}}^{\rm free}(\bs r):=\sum\limits_{n=0}^{p}\sum\limits_{m=-n}^n L_{nm}^{\rm out} r_t^nY_n^m(\theta_t,\varphi_t),
\end{equation}
is used in the FMM. It can be seen as an infinite sum with $L_{nm}^{\rm out}=0$ for $n>p$. Then by \eqref{letolenpart}, we have $\tilde L_{nm}^{\rm out}=0$ for $n>p$. The shifting LE in \eqref{meleapproxshifting} reduce to finite summation:
\begin{equation}\label{approxletole}
\widetilde\Phi_{\ell,{\rm out}}^{\rm free}(\bs r)=\sum\limits_{n=0}^{p}\sum\limits_{m=-n}^n\tilde L_{nm}^{p}\tilde r_t^nY_n^m(\tilde\theta_t,\tilde\varphi_t)
\end{equation}
where
\begin{equation}\label{truncatedLEtoLE}
\tilde L_{nm}^{p}=\sum\limits_{\nu=n}^{p}\sum\limits_{\mu=-\nu}^{\nu} \frac{(-1)^{\nu-n-|\mu-m|+|\mu|-|m|}c_{\nu}^2A_{\nu-n}^{\mu-m}A_n^mr_{tt}^{\nu-n}Y_{\nu-n}^{\mu-m}(\theta_{tt}, \varphi_{tt})}{c_{\nu-n}^2c_n^2A_{\nu}^{\mu}}L_{\nu\mu}^{\rm out}.
\end{equation}
Therefore, the truncated LE to LE shifting \eqref{truncatedLEtoLE} used in the FMM implementation is exact.

Suppose target box $B_t$ is far away from the source box $B_s$. Recall the translation operator \eqref{metole}, the LE expansion coefficient in \eqref{leapprox} can be calculated from ME coefficients via
\begin{equation}\label{metolenotruncate}
L_{nm}^{\rm out}=\sum\limits_{|\nu|=0}^{\infty}\sum\limits_{\mu=-\nu}^{\nu}\frac{(-1)^{\nu+|m|}A_{\nu}^{\mu}A_n^mY_{n+\nu}^{\mu-m}(\theta_{st}, \varphi_{st})}{c_{\nu}^2A_{n+\nu}^{\mu-m}r_{st}^{n+\nu+1}}M_{\nu\mu}^{\rm in}.
\end{equation}
Again, \eqref{metolenotruncate} can not be directly used in the FMM due to the infinite summation. In the FMM, the formulas in \eqref{metolenotruncate} for local expansion coefficients $L_{nm}^{\rm out}$ are further truncated which gives approximated local expansion coefficients
\begin{equation}\label{metoletruncate}
L_{nm}^{p}=\sum\limits_{|\nu|=0}^{p}\sum\limits_{\mu=-\nu}^{\nu}\frac{(-1)^{\nu+|m|}A_{\nu}^{\mu}A_n^mY_{n+\nu}^{\mu-m}(\theta_{st}, \varphi_{st})}{c_{\nu}^2A_{n+\nu}^{\mu-m}r_{st}^{n+\nu+1}}M_{\nu\mu}^{\rm in}.
\end{equation}
We find that the detailed proof of the error estimate for the truncated M2L translation has not been presented in the literature. Therefore, we present a proof as follow:
\begin{theorem}\label{metoleconvergence}
	Suppose $B_s$ and $B_t$ are well separated cubic boxes and denote the radii of their circumscribed spheres by $a_s$ and $a_t$. The well separateness of the boxes means that $|\bs r_c^s-\bs r_c^t|>a_s+ca_t$ with $c>1$. Then
	\begin{equation}\label{metoleerror}
	\Big|\Phi_{\ell,{\rm out}}^{\rm free}(\bs r)-\sum\limits_{n=0}^{p}\sum\limits_{m=-n}^n L_{nm}^{p} r_t^nY_n^m(\theta_t,\varphi_t)\Big|\leq \frac{1}{4\pi}  \frac{Q_{\mathcal J}}{(c-1)a_t}\Big(\frac{a_s+a_t}{a_s+ca_t}\Big)^{p+1},\forall \bs r\in B_t,
	\end{equation}
	where $Q_{\mathcal J}$ is defined in \eqref{totalchargefree}.
\end{theorem}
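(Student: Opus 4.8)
The plan is to reduce to a single source by linearity, to recognise the doubly truncated M2L output as the degree-$\le p$ truncated local expansion about $\bs r_c^t$ of the degree-$p$ truncated multipole expansion about $\bs r_c^s$, and then to bound the resulting double tail by one geometric series.

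\textbf{Step 1 (reduction and zonal representation).} By linearity of \eqref{metoletruncate} in the coefficients $M_{\nu\mu}^{\rm in}$ and the triangle inequality it suffices to prove the estimate for a single unit charge at $\bs r'=\bs r_{\ell j}\in B_s$ and then sum over $j\in\mathcal J$, using $\sum_{j\in\mathcal J}|Q_{\ell j}|=Q_{\mathcal J}$ (see \eqref{totalchargefree}). For such a source, combining the single-charge multipole expansion \eqref{mefreespace}--\eqref{freespaceexpcoef} with the Legendre addition theorem (Theorem \ref{addthmleg}) in the form $P_\nu(\cos\gamma_s)=c_\nu^{-2}\sum_{\mu=-\nu}^{\nu}\overline{Y_\nu^{\mu}(\theta_s',\varphi_s')}Y_\nu^{\mu}(\theta_s,\varphi_s)$ gives
\begin{equation*}
\frac1{4\pi|\bs r-\bs r'|}=\frac1{4\pi}\sum_{\nu=0}^{\infty}(r_s')^{\nu}Z_\nu(\bs r),\qquad Z_\nu(\bs r):=\frac{P_\nu(\cos\gamma_s)}{r_s^{\nu+1}},
\end{equation*}
where each $Z_\nu$ is a zonal solid harmonic of degree $-\nu-1$ centred at $\bs r_c^s$ with fixed axis $\widehat{\bs r'-\bs r_c^s}$. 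Let $Z_\nu=\sum_{n\ge0}Z_{\nu,n}$ be its local expansion about $\bs r_c^t$ into solid harmonics $Z_{\nu,n}$ of degree $n$; this converges absolutely on $B_t$ because $r_{st}:=|\bs r_c^s-\bs r_c^t|>a_s+ca_t>a_t\ge r_t$.

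\textbf{Step 2 (the truncated M2L output).} Applying Theorem \ref{theorem:secondaddition} to each $Y_\nu^{\mu}(\theta_s,\varphi_s)/r_s^{\nu+1}$ shows that the exact M2L map \eqref{metole} returns exactly the true local-expansion coefficients about $\bs r_c^t$; hence restricting its inner sum to $\nu\le p$, as in \eqref{metoletruncate}, returns the true local-expansion coefficients of $\frac1{4\pi}\sum_{\nu=0}^{p}(r_s')^\nu Z_\nu$, and keeping only $n\le p$ truncates that local expansion at degree $p$. Therefore
\begin{equation*}
\sum_{n=0}^{p}\sum_{m=-n}^{n}L_{nm}^{p}\,r_t^nY_n^m(\theta_t,\varphi_t)=\frac1{4\pi}\sum_{\nu=0}^{p}\sum_{n=0}^{p}(r_s')^{\nu}Z_{\nu,n}(\bs r),
\end{equation*}
and, since $\frac1{4\pi|\bs r-\bs r'|}=\frac1{4\pi}\sum_{\nu,n\ge0}(r_s')^{\nu}Z_{\nu,n}(\bs r)$, the M2L error for the unit source equals $\dfrac1{4\pi}\sum_{\nu,n\ge0,\ \max(\nu,n)>p}(r_s')^{\nu}Z_{\nu,n}(\bs r)$.

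\textbf{Step 3 (geometric summation).} I would then invoke the classical bound for translated zonal harmonics, $|Z_{\nu,n}(\bs r)|\le\binom{n+\nu}{\nu}\dfrac{r_t^n}{r_{st}^{n+\nu+1}}$ for $r_t<r_{st}$, which follows from $|P_k|\le1$ applied in the generating expansion $|\bs r-\bs r_c^s|^{-1}=\sum_k r_t^{k}r_{st}^{-k-1}P_k(\cdot)$ differentiated $\nu$ times along the axis. Because $\{\max(\nu,n)>p\}\subseteq\{\nu+n\ge p+1\}$, $r_t\le a_t$ on $B_t$, $r_s'\le a_s$, and $\sum_{\nu+n=k}\binom{k}{\nu}a_s^{\nu}a_t^{n}=(a_s+a_t)^{k}$,
\begin{equation*}
\bigl|\text{(M2L error)}\bigr|\le\frac1{4\pi}\sum_{k\ge p+1}\frac{(a_s+a_t)^{k}}{r_{st}^{k+1}}=\frac1{4\pi(r_{st}-a_s-a_t)}\Bigl(\frac{a_s+a_t}{r_{st}}\Bigr)^{p+1}.
\end{equation*}
Finally $r_{st}>a_s+ca_t$ gives $r_{st}-a_s-a_t>(c-1)a_t$ and $\frac{a_s+a_t}{r_{st}}<\frac{a_s+a_t}{a_s+ca_t}$, so the unit-source error is below $\frac1{4\pi(c-1)a_t}\bigl(\frac{a_s+a_t}{a_s+ca_t}\bigr)^{p+1}$; multiplying by $|Q_{\ell j}|$ and summing over $j\in\mathcal J$ yields \eqref{metoleerror}.

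\textbf{Where the difficulty lies.} The two nontrivial points are the bookkeeping identity in Step 2 — that the two independent truncations, in the multipole degree $\nu$ and in the local degree $n$, correspond to discarding exactly the lattice points with $\max(\nu,n)>p$ — and the sharp estimate on $Z_{\nu,n}$: a generic ``harmonic function in a ball'' bound would carry a spurious polynomial-in-$n$ factor and destroy the exponential rate, while the inclusion $\{\max(\nu,n)>p\}\subseteq\{\nu+n\ge p+1\}$ is precisely what collapses the two one-dimensional tails into a single geometric series with the stated constant $\frac1{4\pi(c-1)a_t}$ and ratio $\frac{a_s+a_t}{a_s+ca_t}$.
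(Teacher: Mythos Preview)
Your Steps 1 and 2 are sound: by linearity it suffices to treat a single source, and the doubly truncated M2L output for that source is exactly $\frac{1}{4\pi}\sum_{\nu\le p,\ n\le p}(r_s')^{\nu}Z_{\nu,n}(\bs r)$, so the error is the sum over $\{(\nu,n):\max(\nu,n)>p\}$. The inclusion $\{\max(\nu,n)>p\}\subset\{\nu+n\ge p+1\}$ together with the binomial identity would then collapse the double tail to the single geometric series you wrote.

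The genuine gap is in Step 3, in the justification of $|Z_{\nu,n}(\bs r)|\le\binom{n+\nu}{\nu}r_t^{n}/r_{st}^{\,n+\nu+1}$. Your argument is to differentiate the generating expansion $|\bs r-\bs r_c^s|^{-1}=\sum_k r_t^{k}r_{st}^{-k-1}P_k(\hat{\bs r}_t\cdot\hat{\bs R})$ ``$\nu$ times along the axis'' and invoke $|P_k|\le 1$. That works only when the axis of $Z_\nu$, namely $\hat{\bs e}_s:=\widehat{\bs r'-\bs r_c^s}$, coincides with the axis $\hat{\bs R}:=\widehat{\bs r_c^s-\bs r_c^t}$ of the right–hand expansion: in that aligned case $(\hat{\bs R}\cdot\nabla)^{\nu}[r_t^{k}P_k(\hat{\bs r}_t\cdot\hat{\bs R})]=\frac{k!}{(k-\nu)!}\,r_t^{k-\nu}P_{k-\nu}(\hat{\bs r}_t\cdot\hat{\bs R})$ and $|P_{k-\nu}|\le 1$ finishes it. In general, however, $\hat{\bs e}_s\neq\hat{\bs R}$, and $(\hat{\bs e}_s\cdot\nabla)^{\nu}$ applied to the zonal harmonic $r_t^{k}P_k(\hat{\bs r}_t\cdot\hat{\bs R})$ is no longer zonal, so the $|P_k|\le 1$ device is unavailable. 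The inequality you need may well be true, but it is not a consequence of the one–line argument you gave; as you yourself flag, this is precisely the ``sharp estimate on $Z_{\nu,n}$'' on which the whole rate hinges.

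The paper avoids this bidegree estimate altogether by taking a different route. It expands $\dfrac{1}{4\pi|\bs r-\bs r'|}$ in the \emph{single} small parameter $|\bs r_t-\bs r_s'|/r_{st}$, defining $\psi^{p}(\bs r,\bs r')=\frac{1}{4\pi}\sum_{n'=0}^{p}|\bs r_t-\bs r_s'|^{n'}P_{n'}(\xi)\,r_{st}^{-n'-1}$; the tail bound then follows from $|P_{n'}|\le 1$ alone, with no need to control any individual $Z_{\nu,n}$. The remaining work is purely algebraic: the Legendre addition theorem (Theorem~\ref{addthmleg}) and the \emph{finite} addition formula of Theorem~\ref{theorem:fourthaddition} are used to rewrite $\psi^{p}$ as a truncated local expansion whose coefficients match the M2L formula \eqref{metoletruncate}. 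So where you split the expansion into a $(\nu,n)$ lattice and must bound each lattice point, the paper keeps the expansion one–dimensional in $n'=\nu+n$ and identifies the truncation with the M2L output a posteriori; that identification is what replaces your Step~3 estimate.
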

\begin{proof}
	By the assumption $|\bs r_c^s-\bs r_c^t|>a_s+ca_t$ ($c>1$), we have $|\bs r'-\bs r_c^s|+|\bs r-\bs r_c^t|<|\bs r_c^t-\bs r_c^s|$ for any $\bs r\in B_t$, $\bs r'\in B_s$.  Then, as in \eqref{expansionbeforeme}-\eqref{localexpansionbeforeme}, we have Taylor expansion
	\begin{equation}\label{M2LTaylorexp}
	\frac{1}{4\pi|\bs r-\bs r'|}=\frac{1}{4\pi|\bs r_t-\bs r_s'-(\bs r_c^s-\bs r_c^t)|}=\frac{1}{4\pi }\sum\limits_{n'=0}^{\infty}\frac{P_{n'}(\xi)}{ |\bs r_c^t-\bs r_c^s|}\Big(\frac{|\bs r_t-\bs r_s'|}{ |\bs r_c^t-\bs r_c^s|}\Big)^{n'},
	\end{equation}
	where
	\begin{equation}
	\xi=-\frac{(\bs r_s'-\bs r_t)\cdot(\bs r_c^s-\bs r_c^t)}{|\bs r_s'-\bs r_t||\bs r_c^s-\bs r_c^t|},\quad\bs r_s'=\bs r'-\bs r_c^s, \quad \bs r_t=\bs r-\bs r_c^t.
	\end{equation}
	Truncate the expansion \eqref{M2LTaylorexp} and denote the approximation by
	\begin{equation}\label{truncatedM2LTaylorexp}
	\psi^p(\bs r, \bs r')=\frac{1}{4\pi }\sum\limits_{n'=0}^{p}\frac{P_{n'}(\xi)}{ |\bs r_c^t-\bs r_c^s|}\Big(\frac{|\bs r_t-\bs r_s'|}{ |\bs r_c^t-\bs r_c^s|}\Big)^{n'}.
	\end{equation}
	Then, we directly have error estimate
	\begin{equation}\label{taylorexperr}
	\begin{split}
	\Big|\frac{1}{4\pi|\bs r-\bs r'|}-\psi^p(\bs r, \bs r')\Big|\leq&\frac{1}{4\pi(|\bs r_c^s-\bs r_c^t|-|\bs r_t-\bs r_s'|)}\left(\frac{|\bs r_t-\bs r_s'|}{ |\bs r_c^s-\bs r_c^t|}\right)^{p+1}\\
	\leq& \frac{1}{4\pi(c-1)a_t}\Big(\frac{a_s+a_t}{a_s+ca_t}\Big)^{p+1}.
	\end{split}
	\end{equation}

	Applying identity $P_n(-x)=(-1)^nP_n(x)$ and Legendre addition theorem in \eqref{truncatedM2LTaylorexp} gives
	\begin{equation}
	\psi^p(\bs r, \bs r')=\sum\limits_{n'=0}^{p}\frac{1}{2n'+1}\frac{(-1)^{n'}}{r_{st}^{n'+1}}\sum\limits_{m'=-n'}^{n'}\overline{Y_{n'}^{m'}(\theta_{st},\varphi_{st})}|\bs r_s'-\bs r_t|^{n'}Y_{n'}^{m'}(\widehat{\bs r_s'-\bs r_t}),
	\end{equation}
	where $(r_{st},\theta_{st},\varphi_{st})$ is the spherical coordinates of $\bs r_c^s-\bs r_c^t$.
	Further, applying addition theorem \eqref{theorem:fourthaddition} and then rearranging the resulted summation, we obtain
	\begin{equation*}
	\begin{split}
	\psi^p(\bs r, \bs r')=&\sum\limits_{n'=0}^{p}\sum\limits_{m'=-n'}^{n'}\frac{\overline{Y_{n'}^{m'}(\theta_{st},\varphi_{st})}}{4\pi r_{st}^{n'+1}}\sum\limits_{n=0}^{n'}\sum\limits_{m=-n}^{n}B_{n'm'}^{nm}(r_s')^{n'-n}Y_{n'-n}^{m'-m}(\theta_s',\varphi_s')r_t^{n}Y_{n}^{m}(\theta_t,\varphi_t)\\
	=&\sum\limits_{n=0}^p\sum\limits_{m=-n}^{n}\sum\limits_{n'=n}^{p}\sum\limits_{m'=-n'}^{n'}\frac{\overline{Y_{n'}^{m'}(\theta_{st},\varphi_{st})}}{4\pi r_{st}^{n'+1}}B_{n'm'}^{nm}(r_s')^{n'-n}Y_{n'-n}^{m'-m}(\theta_s',\varphi_s')r_t^{n}Y_{n}^{m}(\theta_t,\varphi_t)\\
	=&\sum\limits_{n=0}^p\sum\limits_{m=-n}^{n}\Big[\sum\limits_{\nu=0}^{p}\sum\limits_{\mu=-\nu}^{\nu}\frac{\overline{Y_{n+\nu}^{m+\mu}(\theta_{st},\varphi_{st})}}{4\pi r_{st}^{n+\nu+1}}B_{n+\nu,m+\mu}^{nm}(r_s')^{\nu}Y_{\nu}^{\mu}(\theta_s',\varphi_s')\Big]r_t^{n}Y_{n}^{m}(\theta_t,\varphi_t),
	\end{split}
	\end{equation*}
	where
	\begin{equation*}
	B_{n'm'}^{nm}=\frac{(-1)^{n'+n-|m|+|m'|-|m'-m|} A_{n}^{m}A_{n'-n}^{m'-m} }{c_{n}^2c_{n'-n}^2A_{n'}^{m'}},
	\end{equation*}
	$(r_t,\theta_t,\varphi_t)$ and $(r'_s, \theta'_s,\varphi'_s)$, are the spherical coordinates of $\bs r-\bs r_c^t$ and $\bs r'-\bs r_c^s$, respectively.
	Apparently, $\psi^p(\bs r, \bs r')$ is a truncated LE at target center $\bs r_c^t$ with coefficients given by
	\begin{equation}
	\hat L_{nm}^p=\sum\limits_{\nu=0}^{p}\sum\limits_{\mu=-\nu}^{\nu}\frac{\overline{Y_{n+\nu}^{m+\mu}(\theta_{st},\varphi_{st})}}{4\pi r_{st}^{n+\nu+1}}B_{n+\nu,m+\mu}^{nm}(r_s')^{\nu}Y_{\nu}^{\mu}(\theta_s',\varphi_s').
	\end{equation}
	By identity $Y_{\nu}^{\mu}(\theta,\varphi)=(-1)^{\mu}\overline{Y_{\nu}^{-\mu}(\theta,\varphi)}$,
	the coefficients can be re-expressed as
	\begin{equation}
	\begin{split}
	\hat L_{nm}^p=&\sum\limits_{\nu=0}^{p}\sum\limits_{\mu=-\nu}^{\nu}\frac{\overline{Y_{n+\nu}^{m-\mu}(\theta_{st},\varphi_{st})}}{r_{st}^{n+\nu+1}}\tilde A_{n+\nu,m-\mu}^{nm}(r_s')^{\nu}(-1)^{\mu}\overline{Y_{\nu}^{\mu}(\theta_s',\varphi_s')}\\
	=&\sum\limits_{\nu=0}^{p}\sum\limits_{\mu=-\nu}^{\nu}\frac{(-1)^{\nu-|m|} A_{n}^{m}A_{\nu}^{-\mu}Y_{n+\nu}^{\mu-m}(\theta_{st},\varphi_{st}) }{c_{n}^2A_{n+\nu}^{m-\mu}r_{st}^{n+\nu+1}}M_{\nu}^{\mu}.
	\end{split}
	\end{equation}
	Noting that $A_n^m=A_{n}^{-m}$, the above coefficients is exactly the truncated M2L  coefficients given in \eqref{metoletruncate}. As a result, we have
	\begin{equation}
	\sum\limits_{n=0}^{p}\sum\limits_{m=-n}^n L_{nm}^{p} r_t^nY_n^m(\theta_t,\varphi_t)=\sum\limits_{j\in\mathcal J}Q_{\ell j}\psi^p(\bs r, \bs r_{\ell j}),
	\end{equation}
	and the error estimate \eqref{metoleerror} follows by applying \eqref{taylorexperr} term by term.
\end{proof}

\subsection{Exponential convergence of the ME for reaction components and the conception of equivalent polarization source} The exponential convergence in the Theorem \ref{meconvergence} is a direct result of the error estimate \eqref{meerror}. However, it is much more difficult to derive error estimates for the MEs \eqref{melayerupgoingimage1} of the reaction components of layered Green's function. Here, we first present the main theorem which is the key to prove the exponential convergence of the MEs, LEs and M2L translation operators in this and the next subsections. For the smoothness of the presentation, the detailed proof will be postponed to subsection 3.4.

Let us consider the convergence and error estimates of the MEs in \eqref{melayerupgoingimage1}. According to its derivation in section 3.2, we only need to prove the convergence and error estimates of the expansions in \eqref{melayerupgoingtaylor}. For this purpose, define general integral
\begin{equation}\label{generalintegraldef}
\mathcal I(\bs r;\sigma)=\int_{-\infty}^{\infty}\int_{-\infty}^{\infty}\frac{1}{k_{\rho}}e^{\ri\bs k\cdot\bs r}\sigma(k_{\rho})dk_xdk_y, \quad \forall \bs r=(x, y, z)\in \mathbb R^3,
\end{equation}
where $k_{\rho}=\sqrt{k_x^2+k_y^2}$, $\bs k=(k_x, k_y, \ri k_{\rho})$, $\sigma(k_{\rho})$ is a given density function.
Applying Taylor expansion gives
\begin{equation}\label{twointegralexp}
\begin{split}
&\mathcal I(\bs r+\bs r';\sigma)=\int_{-\infty}^{\infty}\int_{-\infty}^{\infty}e^{\ri\bs k\cdot\bs r}\sum\limits_{n=0}^{\infty}\frac{1}{k_{\rho}}\frac{(\ri \bs k\cdot{\bs r}')^n}{n!}\sigma(k_{\rho})dk_xdk_y,\\
&\mathcal I(\bs r+\bs r'+\bs r'';\sigma)=\int_{-\infty}^{\infty}\int_{-\infty}^{\infty}\frac{1}{k_{\rho}}e^{\ri\bs k\cdot\bs r}\sum\limits_{n=0}^{\infty}\sum\limits_{\nu=0}^{\infty}\frac{(\ri \bs k\cdot{\bs r}')^n(\ri \bs k\cdot{\bs r}'')^{\nu}}{n!\nu!}\sigma(k_{\rho})dk_xdk_y,
\end{split}
\end{equation}
for any $\bs r'=(x', y', z')$, $\bs r''=(x'', y'', z'')$ in $\mathbb R^3$.
Suppose $z>0$, $z+z'>0$, $z+z'+z''>0$, and the density function $\sigma(k_{\rho})$ is not increasing exponentially as $k_{\rho}\rightarrow\infty$, then the integrals in \eqref{generalintegraldef} and \eqref{twointegralexp} are convergent. We first present the conclusion that the improper integral and the infinite summation in \eqref{twointegralexp} can exchange order and the resulted series have exponential convergence under suitable conditions. Detailed proof will be given in section 4.5. For the sake of brevity, we denote
\begin{equation}\label{InInuintegrals}
\begin{split}
&\mathcal I_n(\bs r, \bs r';\sigma)=\int_{-\infty}^{\infty}\int_{-\infty}^{\infty}\frac{1}{k_{\rho}}e^{\ri\bs k\cdot\bs r}\frac{(\ri \bs k\cdot{\bs r}')^n}{n!}\sigma(k_{\rho})dk_xdk_y,\\
&\mathcal I_{n\nu}(\bs r,\bs r',\bs r'';\sigma)=\int_{-\infty}^{\infty}\int_{-\infty}^{\infty}\frac{1}{k_{\rho}}e^{\ri\bs k\cdot\bs r}\frac{(\ri \bs k\cdot{\bs r}')^n(\ri \bs k\cdot{\bs r}'')^{\nu}}{n!\nu!}\sigma(k_{\rho})dk_xdk_y.
\end{split}
\end{equation}

\begin{theorem}\label{Thm:generalintegral}
	Suppose the density function $\sigma(k_{\rho})$ is analytic and has a bound $|\sigma(k_{\rho})|\leq\bs M_{\sigma}$ in the right half complex plane $\{k_{\rho}:\mathfrak{Re}k_{\rho}>0\}$, $\bs r=(x, y, z), \bs r'=(x', y', z'), \bs r''=(x'', y'', z'')\in \mathbb R^3$ such that $z>0$, $z+z'>0$, $z+z'+z''>0$ and $|\bs r|>|\bs r'|$, $|\bs r|>|\bs r'|+|\bs r''|$. Then, the following expansions
	\begin{equation}\label{finaltwointegralexp}
	\mathcal I(\bs r+\bs r';\sigma)=\sum\limits_{n=0}^{\infty}\mathcal I_n(\bs r, \bs r';\sigma),\quad\mathcal I(\bs r+\bs r'+\bs r'';\sigma)=\sum\limits_{n=0}^{\infty}\sum\limits_{\nu=0}^{\infty}\mathcal I_{n\nu}(\bs r,\bs r',\bs r'';\sigma),
	\end{equation}
	hold. Moreover, the truncation error estimates are given by
	\begin{equation}\label{reactionintegralestimate1}
	\Big|\mathcal I(\bs r+ \bs r';\sigma)-\sum\limits_{n=0}^{p}\mathcal I_n(\bs r, \bs r';\sigma)\Big|\leq \frac{2\pi\bs M_{\sigma}}{|\bs r|-|\bs r'|}\Big|\frac{\bs r'}{\bs r}\Big|^{p+1},
	\end{equation}
	and
	\begin{equation}\label{reactionintegralestimate2}
	\Big|\mathcal I(\bs r+ \bs r'+\bs r'';\sigma)-\sum\limits_{n=0}^{p}\sum\limits_{\nu=0}^{p}\mathcal I_{n\nu}(\bs r,\bs r',\bs r'';\sigma)\Big|\leq \frac{4\pi\bs M_{\sigma}}{|\bs r|-|\bs r'|-|\bs r''|}\left(\frac{|\bs r'|+|\bs r''|}{|\bs r|}\right)^{p+1}.
	\end{equation}
\end{theorem}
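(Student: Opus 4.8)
The plan is to pass to polar coordinates in $(k_x,k_y)$, rewrite the resulting one‑dimensional $k_\rho$‑integrals by a Cagniard--de Hoop contour deformation, and thereby reduce the two estimates to the classical free‑space bounds \eqref{meerror}--\eqref{leerror} and, for the double expansion, their analogue used in Theorem~\ref{metoleconvergence}. Write $\bs k=k_\rho\bs k_0$ with $\bs k_0(\alpha)=(\cos\alpha,\sin\alpha,\ri)$; since $dk_xdk_y=k_\rho\,dk_\rho\,d\alpha$ the weight $1/k_\rho$ is absorbed, and $\bs k_0\cdot\bs k_0=0$. For real $\bs a=(a_1,a_2,a_3)$ one has $e^{\ri\bs k\cdot\bs a}=e^{-k_\rho\zeta_{\bs a}}$ with $\zeta_{\bs a}:=-\ri\bs k_0\cdot\bs a$, $\mathfrak{Re}\,\zeta_{\bs a}=a_3$ and $|\bs k_0\cdot\bs a|\le|\bs a|$. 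Put $g(\zeta):=\int_0^\infty e^{-k\zeta}\sigma(k)\,dk$; a one‑dimensional Cagniard--de Hoop rotation of the $k$‑contour (legitimate because, by Proposition~\ref{Prop:densityprop}/the hypothesis, $\sigma$ is analytic and bounded on $\{\mathfrak{Re}\,k\ge0\}$) shows $g$ is analytic on $\{\mathfrak{Re}\,\zeta>0\}$ with $|g^{(j)}(\zeta)|\le j!\,\bs M_\sigma/|\zeta|^{j+1}$ for all $j\ge0$. Then $\mathcal I(\bs r;\sigma)=\int_0^{2\pi}g(\zeta_{\bs r})\,d\alpha$, $\mathcal I_n(\bs r,\bs r';\sigma)=\int_0^{2\pi}\tfrac{(\zeta_{\bs r'})^n}{n!}g^{(n)}(\zeta_{\bs r})\,d\alpha$, $\mathcal I_{n\nu}=\int_0^{2\pi}\tfrac{(\zeta_{\bs r'})^n(\zeta_{\bs r''})^\nu}{n!\,\nu!}g^{(n+\nu)}(\zeta_{\bs r})\,d\alpha$; in particular $\mathcal I_0=\mathcal I$ and $\sum_{\nu\ge0}\mathcal I_{n\nu}(\bs r,\bs r',\bs r'';\sigma)=\mathcal I_n(\bs r+\bs r'',\bs r';\sigma)$. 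Under the hypotheses $z>0$, $z+z'>0$ the segment $\zeta_{\bs r}+s\zeta_{\bs r'}$, $s\in[0,1]$, has real part $z+sz'>0$, a convex combination of $z>0$ and $z+z'>0$, so it remains in the domain of $g$; hence, under the $\alpha$‑integral, the first identity in \eqref{finaltwointegralexp} is the Taylor expansion of $g$ about $\zeta_{\bs r}$ with $p$‑th remainder $\tfrac{(\zeta_{\bs r'})^{p+1}}{p!}\int_0^1(1-s)^p g^{(p+1)}(\zeta_{\bs r}+s\zeta_{\bs r'})\,ds$, and similarly for the double expansion.

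The heart of the matter is to estimate the $\alpha$‑average of this remainder \emph{without} destroying the Euclidean geometry. A bare Cauchy bound $|g^{(p+1)}(\zeta_{\bs r}+s\zeta_{\bs r'})|\le (p+1)!\,\bs M_\sigma/|\zeta_{\bs r}+s\zeta_{\bs r'}|^{p+2}$ followed by $\int_0^{2\pi}d\alpha$ is too lossy, since $|\bs k_0\cdot\bs v|$ can be as small as $|v_3|$ and the $\alpha$‑integral of $|\bs k_0\cdot\bs v|^{-(p+2)}$ then carries spurious elliptic‑integral factors (the benchmark is $\sigma\equiv1$: there $g(\zeta)=1/\zeta$, the machinery reproduces $\mathcal I(\bs r;1)=2\pi/|\bs r|$, and \eqref{reactionintegralestimate1} holds sharply with constant $2\pi$). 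Instead I would integrate over $\alpha$ first: recognizing the $\alpha$‑average as a Sommerfeld integral $2\pi\int_0^\infty J_0(k_\rho\rho)e^{-k_\rho z}(\cdots)\sigma(k_\rho)\,dk_\rho$ and changing variable to $p=-\ri\bs k_0\cdot(\text{spatial point})$ turns it into a contour integral of the form $\int\frac{(\text{data built from }\sigma)}{\sqrt{(p-z+\ri\rho)(p-z-\ri\rho)}}\,dp$ over the chord joining the branch points $z\pm\ri\rho$, which lie on $\{|p|=|\bs r|\}$. Deforming this chord onto the right‑hand arc of $\{|p|=|\bs r|\}$ is legitimate --- the lens between chord and arc lies in $\{\mathfrak{Re}\,p>0\}$, where $\sigma$, hence the integrand, is analytic and bounded --- and on that arc $|g(p)|\le\bs M_\sigma/|\bs r|$; equivalently, this exhibits $\mathcal I(\cdot\,;\sigma)$ as a superposition $2\pi\int\frac{d\mu(\bs a)}{|\cdot-\bs a|}$ of free‑space Green's functions whose sources obey $|\bs a|\ge|\bs r|$ with total mass $\le\bs M_\sigma$. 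For the layered‑media densities of Proposition~\ref{Prop:densityprop}, which are finite sums/ratios of exponentials $e^{-k_\rho D_\ell}$, this is transparent: one has $\sigma(k_\rho)=\int_0^\infty e^{-k_\rho s}\,d\tau(s)$ with $\|\tau\|\le\bs M_\sigma$, whence $\mathcal I(\bs v;\sigma)=2\pi\int_0^\infty\frac{d\tau(s)}{|\bs v+s\hat{\bs z}|}$ and $|\bs v+s\hat{\bs z}|\ge|\bs v|$ for $v_3>0$, $\hat{\bs z}=(0,0,1)$.

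Granting this representation, \eqref{reactionintegralestimate1} follows by applying the classical estimate $\big||\bs R+\bs r'|^{-1}-\sum_{n=0}^p(\cdots)\big|\le \tfrac1{|\bs R|-|\bs r'|}\big(\tfrac{|\bs r'|}{|\bs R|}\big)^{p+1}$ underlying \eqref{meerror}--\eqref{leerror} at each source $\bs R=\bs r-\bs a$ (using $|\bs R|\ge|\bs r|$ and that $t\mapsto\tfrac1{t-|\bs r'|}(|\bs r'|/t)^{p+1}$ decreases), exactly as \eqref{metoleerror} was derived in Theorem~\ref{metoleconvergence}; the attendant uniform convergence also justifies interchanging sum and integral in \eqref{finaltwointegralexp}. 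For \eqref{reactionintegralestimate2} one peels the square partial sum in two stages, using $\sum_{\nu\ge0}\mathcal I_{n\nu}(\bs r,\bs r',\bs r'';\sigma)=\mathcal I_n(\bs r+\bs r'',\bs r';\sigma)$ and its symmetric analogue together with the corresponding classical bound for the square‑truncated double expansion of $|\bs R+\bs r'+\bs r''|^{-1}$ and the elementary inequalities $|\bs r+\bs r''|\ge|\bs r|-|\bs r''|$ and $\tfrac{|\bs r'|}{|\bs r|-|\bs r''|}\le\tfrac{|\bs r'|+|\bs r''|}{|\bs r|}$, valid when $|\bs r|>|\bs r'|+|\bs r''|$; these account for the constant $4\pi$.

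I expect the contour deformation of the second paragraph to be the main obstacle: one must verify that the Cagniard path (equivalently the circular arc $\{|p|=|\bs r|\}$) stays inside $\{\mathfrak{Re}\,p>0\}$ where $\sigma$ is analytic and bounded, that the auxiliary arcs at infinity introduced along the way contribute nothing, and --- most delicately --- that the deformation really delivers the Euclidean decay rate $(|\bs r'|/|\bs r|)^{p+1}$ with the sharp constant, rather than the weaker rate $(|\bs r'|/z)^{p+1}$ that a naive Cauchy estimate on the half‑plane $\{\mathfrak{Re}\,\zeta>0\}$ would yield. By contrast, the polar reduction, the manipulation of the exponential Taylor remainder, the sum/integral interchange, and the reduction of the double‑sum estimate to the single one are all routine.
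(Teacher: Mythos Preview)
Your polar--coordinate reduction and the bound $|g^{(j)}(\zeta)|\le j!\,\bs M_\sigma/|\zeta|^{j+1}$ (by rotating the $k_\rho$--ray into the right half plane) are correct, but the step you yourself flag as the main obstacle is a genuine gap, and neither of the two routes you sketch closes it. The Laplace--transform representation $\sigma(k_\rho)=\int_0^\infty e^{-k_\rho s}\,d\tau(s)$ is not implied by the hypothesis ``analytic and bounded on $\{\mathfrak{Re}\,k_\rho>0\}$''; even for the layered--media densities, which do admit an image series, the total variation $\|\tau\|$ can exceed $\bs M_\sigma$ when reflection coefficients have mixed signs, so at best you would prove the estimate with a different constant and for a restricted class of $\sigma$. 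The chord--to--arc deformation in the variable $p=\zeta_{\bs r}$ also breaks down: the integrand of $\mathcal I_n$ carries the factor $(\zeta_{\bs r'}(\alpha))^n$, and $\zeta_{\bs r'}$ is \emph{not} a function of $p$ alone. Writing it out with $\cos\alpha=(z-p)/(\ri\rho)$ one finds a term proportional to $\sin\alpha(p)=\sqrt{1+(p-z)^2/\rho^2}$, and on the arc $|p|=|\bs r|$ this is of order $|\bs r|/\rho$, so $|\zeta_{\bs r'}|$ is no longer controlled by $|\bs r'|$ after deformation. Thus the ``superposition of free--space Green's functions'' picture, while suggestive, does not deliver \eqref{reactionintegralestimate1}--\eqref{reactionintegralestimate2} for general $\sigma$.

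The paper's proof avoids this difficulty by a different deformation. Rather than polar coordinates, it rotates $(k_x,k_y)$ to Cartesian $(\xi,\eta)$ aligned with the horizontal projection of $\bs r$ and, for each fixed $\eta>0$, performs a Cagniard--de~Hoop deformation in the $\xi$--variable onto the hyperbola $\xi_\pm(t)=\tfrac{\eta}{r}(\ri\rho t\pm z\sqrt{t^2-1})$. The crux is then a pointwise estimate on the \emph{source} kernel itself (Lemma~\ref{lemmagqest}): using $\xi_\pm^2+\eta^2=-\Lambda_\pm^2$ and the elementary inequality $(a\sin\theta+b\cos\theta)^2\le a^2+b^2$, one shows $|\hat g_q(\xi_\pm(t),\pm\eta,\phi;\tilde{\bs r})|\le |\tilde{\bs r}|^q(\eta t)^q/q!$, a bound that depends on $\tilde{\bs r}$ only through $|\tilde{\bs r}|$ and hence handles $\bs r'$ and $\bs r''$ with no loss. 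Integrating $\eta^{n+\nu}t^{n+\nu}e^{-\eta r t}/\sqrt{t^2-1}$ over $(t,\eta)\in[1,\infty)\times[0,\infty)$ yields exactly $\tfrac{\pi}{2}(n+\nu)!/r^{n+\nu+1}$, and \eqref{reactionintegralestimate1}--\eqref{reactionintegralestimate2} follow by summing the geometric/binomial tails. In short, the paper chooses its contour so that the factor analogous to your $(\zeta_{\bs r'})^n$ admits a clean bound \emph{after} deformation --- precisely the point where your arc approach stalls.
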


Applying the above theorem, we can prove the convergence of the MEs in \eqref{melayerupgoingimage1}.
\begin{theorem}\label{Thm:reactmeconvergence}
	Given $\bs r=(x, y, z)$ and $\bs r'=(x', y', z')$ be two points in the $\ell$-th and $\ell'$-th layer, i.e., $d_{\ell}<z<d_{\ell-1}, d_{\ell'}<z'<d_{\ell'-1}$, respectively. Suppose $\bs r_c^s$ is a source center in the $\ell'$-th layer such that  $|\bs r'-\bs r_c^s|<|\tau_{\ell\ell'}^{\mathfrak{ab}}(\bs r, \bs r_c^s)|$, then the expansions in \eqref{melayerupgoingimage1} hold and have the following error estimate
	\begin{equation}\label{errorestimatereaction}
	\Big|u_{\ell\ell'}^{\mathfrak{ab}}(\bs r,\bs r')-\sum\limits_{n=0}^{p}\sum\limits_{m=-n}^nM_{nm}^{\mathfrak{ab}}{\mathcal F}_{nm}^{\mathfrak{ab}}(\bs r, \bs r_c^s)\Big|\leq\frac{(4\pi)^{-1}\bs M_{\sigma_{\ell\ell'}^{\mathfrak{ab}}}}{|\tau_{\ell\ell'}^{\mathfrak{ab}}(\bs r, \bs r_c^s)|-|\bs r'-\bs r_c^s|}\left(\frac{|\bs r'-\bs r_c^s|}{|\tau_{\ell\ell'}^{\mathfrak{ab}}(\bs r, \bs r_c^s)|}\right)^{p+1},
	\end{equation}
	where $\bs M_{\sigma_{\ell\ell'}^{\mathfrak{ab}}}$ is the bound of  $\sigma_{\ell\ell'}^{\mathfrak{ab}}(k_{\rho})$ in the right half complex plane.
\end{theorem}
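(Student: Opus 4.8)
The plan is to recognize $u_{\ell\ell'}^{\mathfrak{ab}}(\bs r,\bs r')$ as a value of the general integral $\mathcal I(\cdot\,;\sigma_{\ell\ell'}^{\mathfrak{ab}})$ of \eqref{generalintegraldef}, apply Theorem~\ref{Thm:generalintegral}, and then invoke the term-by-term rearrangement already carried out in Section~3.2. Indeed $u_{\ell\ell'}^{\mathfrak{ab}}(\bs r,\bs r')=\frac{1}{8\pi^2}\mathcal I(\bs\tau_{\ell\ell'}^{\mathfrak{ab}}(\bs r,\bs r');\sigma_{\ell\ell'}^{\mathfrak{ab}})$ by \eqref{generalcomponents}, and the source/target separations \eqref{sourcetargetseparationsc1} (equivalently, a direct reading of \eqref{coordmapping}) give
\begin{equation*}
\bs\tau_{\ell\ell'}^{\mathfrak{ab}}(\bs r,\bs r')=\bs\tau_{\ell\ell'}^{\mathfrak{ab}}(\bs r,\bs r_c^s)+\bs w_{\mathfrak b},\qquad \bs w_1=\bs\tau(-\bs r_s'),\quad \bs w_2=-\bs r_s',\quad \bs r_s'=\bs r'-\bs r_c^s ,
\end{equation*}
so that $|\bs w_{\mathfrak b}|=|\bs r'-\bs r_c^s|$ by \eqref{reflection} in either case.

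Next I would check the hypotheses of Theorem~\ref{Thm:generalintegral} with the role of $\bs r$ played by $\bs\tau_{\ell\ell'}^{\mathfrak{ab}}(\bs r,\bs r_c^s)$, that of $\bs r'$ by $\bs w_{\mathfrak b}$, and $\bs r''=\bs 0$. Analyticity and the bound $|\sigma_{\ell\ell'}^{\mathfrak{ab}}(k_\rho)|\le\bs M_{\sigma_{\ell\ell'}^{\mathfrak{ab}}}$ on $\{\mathfrak{Re}\,k_\rho>0\}$ are exactly Proposition~\ref{Prop:densityprop}. The positivity conditions require that the $z$-components of $\bs\tau_{\ell\ell'}^{\mathfrak{ab}}(\bs r,\bs r_c^s)$ and of $\bs\tau_{\ell\ell'}^{\mathfrak{ab}}(\bs r,\bs r')$ be positive; from \eqref{coordmapping} each such $z$-component is the sum of one ``field-point minus lower interface'' (or ``upper interface minus field-point'') term and one analogous ``source'' term, and both are strictly positive because $d_\ell<z<d_{\ell-1}$ and $d_{\ell'}<z',z_c^s<d_{\ell'-1}$. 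Finally the separateness condition $|\bs r|>|\bs r'|$ becomes $|\bs\tau_{\ell\ell'}^{\mathfrak{ab}}(\bs r,\bs r_c^s)|>|\bs r'-\bs r_c^s|$, which is the standing hypothesis.

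Theorem~\ref{Thm:generalintegral} then yields $\mathcal I(\bs\tau_{\ell\ell'}^{\mathfrak{ab}}(\bs r,\bs r_c^s)+\bs w_{\mathfrak b};\sigma_{\ell\ell'}^{\mathfrak{ab}})=\sum_{n=0}^{\infty}\mathcal I_n(\bs\tau_{\ell\ell'}^{\mathfrak{ab}}(\bs r,\bs r_c^s),\bs w_{\mathfrak b};\sigma_{\ell\ell'}^{\mathfrak{ab}})$ together with the tail bound \eqref{reactionintegralestimate1}; dividing by $8\pi^2$ and using $2\pi/(8\pi^2)=(4\pi)^{-1}$ produces exactly the right-hand side of \eqref{errorestimatereaction}. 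What remains is to identify $\frac{1}{8\pi^2}\mathcal I_n(\bs\tau_{\ell\ell'}^{\mathfrak{ab}}(\bs r,\bs r_c^s),\bs w_{\mathfrak b};\sigma_{\ell\ell'}^{\mathfrak{ab}})=\sum_{m=-n}^n M_{nm}^{\mathfrak{ab}}{\mathcal F}_{nm}^{\mathfrak{ab}}(\bs r,\bs r_c^s)$ for each $n$, whence the truncated sums agree term by term and the infinite identity \eqref{melayerupgoingimage1} follows by passing to the limit. This is precisely the computation of Section~3.2: writing $\bs k=k_\rho\bs k_0$ with $\bs k_0=(\cos\alpha,\sin\alpha,\ri)$, factoring $(\ri\bs k\cdot\bs w_{\mathfrak b})^n/n!=k_\rho^n|\bs r_s'|^n(\ri\bs k_0\cdot\hat{\bs w}_{\mathfrak b})^n/n!$, expanding via the limit addition Theorem~\ref{lemma3}, and using the reflection identities \eqref{sphrelations} to express the angles of $\bs w_{\mathfrak b}$ through those of $\bs r_s'$ (this is where the factors $(-1)^m$ for $\mathfrak b=1$ and $(-1)^n$ for $\mathfrak b=2$ in \eqref{mebasis1} originate); matching with $M_{nm}^{\mathfrak{ab}}$ in \eqref{melayerupgoingimage1} then finishes the proof.

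The hard analytic content — interchange of the double Sommerfeld integral with the infinite Taylor sum, and the geometric-rate tail estimate obtained by Cagniard--de Hoop and contour deformation — is entirely carried by Theorem~\ref{Thm:generalintegral}; accordingly I expect the only real care here to be bookkeeping: verifying that the interchange performed ``directly'' in Section~3.2 is now legitimate, and keeping the $\mathfrak b=1$ versus $\mathfrak b=2$ sign and reflection factors straight when identifying the truncated series with the truncated ME.
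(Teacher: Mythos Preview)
Your proposal is correct and follows essentially the same route as the paper's proof: rewrite $u_{\ell\ell'}^{\mathfrak{ab}}$ as $\frac{1}{8\pi^2}\mathcal I(\bs\tau_{\ell\ell'}^{\mathfrak{ab}}(\bs r,\bs r_c^s)+\bs w_{\mathfrak b};\sigma_{\ell\ell'}^{\mathfrak{ab}})$, verify the positivity, analyticity and separation hypotheses (via \eqref{coordmapping}, Proposition~\ref{Prop:densityprop}, and the standing assumption), and invoke Theorem~\ref{Thm:generalintegral}; the term-by-term identification with \eqref{melayerupgoingimage1} through Theorem~\ref{lemma3} and \eqref{sphrelations} is exactly what the paper does implicitly by referring back to Section~3.2. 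If anything, you are slightly more explicit than the paper in checking that both $z$-components (of $\bs\tau_{\ell\ell'}^{\mathfrak{ab}}(\bs r,\bs r_c^s)$ and of $\bs\tau_{\ell\ell'}^{\mathfrak{ab}}(\bs r,\bs r')$) are positive and in tracking the $(-1)^m$ versus $(-1)^n$ factors.
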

\begin{proof}
	As in the analysis presented in the last subsection, it is more convenient to prove the expansions \eqref{melayerupgoingtaylor}. Recalling definition \eqref{generalintegraldef} and \eqref{generalcomponents}, we obtain
	\begin{equation}
	u_{\ell\ell'}^{\mathfrak{a}1}(\bs r,\bs r')=\frac{1}{8\pi^2 }\mathcal I(\tau_{\ell\ell'}^{\mathfrak a 1}(\bs r,\bs r_c^s)+ {\tau(-\bs r'_s)};\sigma_{\ell\ell'}^{\mathfrak{a}1}),\;\;
	u_{\ell\ell'}^{\mathfrak{a}2}(\bs r,\bs r')=\frac{1}{8\pi^2 }\mathcal I(\tau_{\ell\ell'}^{\mathfrak a 2}(\bs r,\bs r_c^s) -\bs r'_s;\sigma_{\ell\ell'}^{\mathfrak{a}2}).
	\end{equation}
	From definition \eqref{coordmapping}, we can see that $\tau_{\ell\ell'}^{\mathfrak{ab}}(\bs r, \bs r_c^s)$ always have positive $z$-coordinates given $\bs r$ in the $\ell$-th layer and $\bs r_c^s$ in the $\ell'$-th layer. Moreover, Proposition \ref{matrixentrylemma} shows that the density function $\sigma_{\ell\ell'}^{\mathfrak{ab}}(k_{\rho})$ is analytic and bounded in the right half complex plane $\{k_{\rho}:\mathfrak{Re}k_{\rho}>0\}$. Together with the assumption $|\bs r'-\bs r_c^s|<|\tau_{\ell\ell'}^{\mathfrak{ab}}(\bs r, \bs r_c^s)|$, theorem \ref{Thm:generalintegral} with density function $\sigma_{\ell\ell'}^{\mathfrak{ab}}(k_{\rho})$ and coordinates groups $\{\bs\tau_{\ell\ell'}^{\mathfrak{a}1}(\bs r, \bs r_c^s), \bs\tau(-\bs r_s')\}$, $\{\bs\tau_{\ell\ell'}^{\mathfrak{a}2}(\bs r, \bs r_c^s), -\bs r_s'\}$ can be applied. Therefore, we obtain
	\begin{equation}\label{reformula}
	\begin{split}
	u_{\ell\ell'}^{\mathfrak{a}1}(\bs r,\bs r')=&\frac{1}{8\pi^2 }\sum\limits_{n=0}^{\infty}\mathcal I_n(\tau_{\ell\ell'}^{\mathfrak a 1}(\bs r,\bs r_c^s), {\tau(-\bs r'_s)};\sigma_{\ell\ell'}^{\mathfrak{a}1}),\\
	u_{\ell\ell'}^{\mathfrak{a}2}(\bs r,\bs r')=&\frac{1}{8\pi^2 }\sum\limits_{n=0}^{\infty}\mathcal I_n(\tau_{\ell\ell'}^{\mathfrak a 2}(\bs r,\bs r_c^s), -\bs r'_s;\sigma_{\ell\ell'}^{\mathfrak{a}2}).
	\end{split}
	\end{equation}
	At the mean time, the error estimate \eqref{errorestimatereaction} follows by applying \eqref{reactionintegralestimate1} in \eqref{reformula}.
\end{proof}

The convergence results in the above indicates an important fact that the error of the truncated ME is not determined by the Euclidean distance between source center $\bs r_c^s$ and target $\bs r$ as in the free space case (see Theorem \ref{meconvergence}). Actually, the distances along $z$-direction have been replaced by summations of the distances between $\bs r$, $\bs r_c^s$ and corresponding nearest interfaces of the layered media.

Nevertheless, there are two special cases, i.e., $|\tau_{\ell\ell+1}^{12}(\bs r,\bs r_c^s)|=|\bs r-\bs r_c^s|$ if $\bs r$ and $\bs r_c^s$ are in the $\ell$-th and $(\ell+1)$-th layer and $|\tau_{\ell\ell-1}^{21}(\bs r,\bs r_c^s)|=|\bs r-\bs r_c^s|$ if $\bs r$ and $\bs r_c^s$ are in the $\ell$-th and $(\ell-1)$-th layer. Therefore, the MEs of $u_{\ell\ell+1}^{12}(\bs r, \bs r')$ and $u_{\ell\ell-1}^{21}(\bs r, \bs r')$ have the same convergence behavior as that of free space components.
\begin{figure}[ht!]
	\center
	\subfigure[$u_{\ell\ell'}^{11}$]{\includegraphics[scale=0.6]{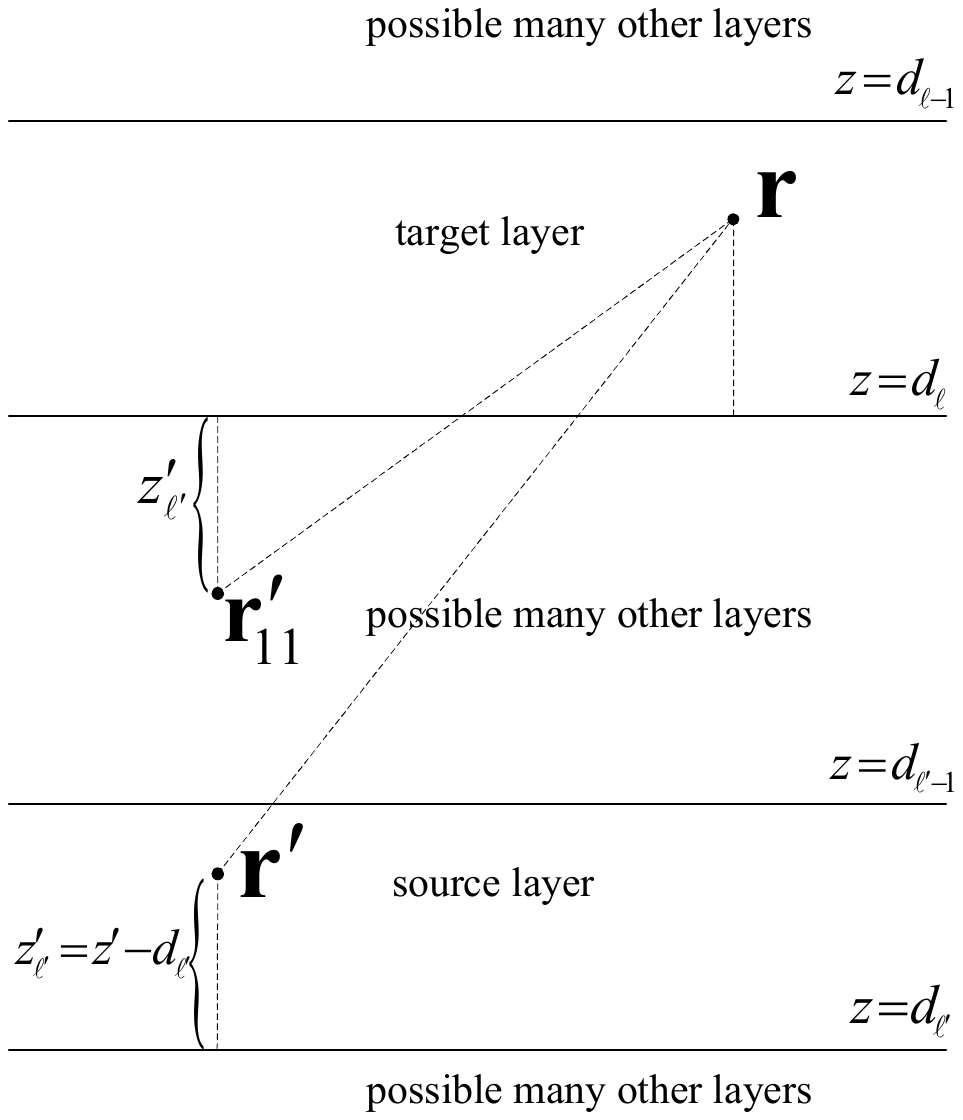}}\quad
	\subfigure[$u_{\ell\ell'}^{21}$]{\includegraphics[scale=0.6]{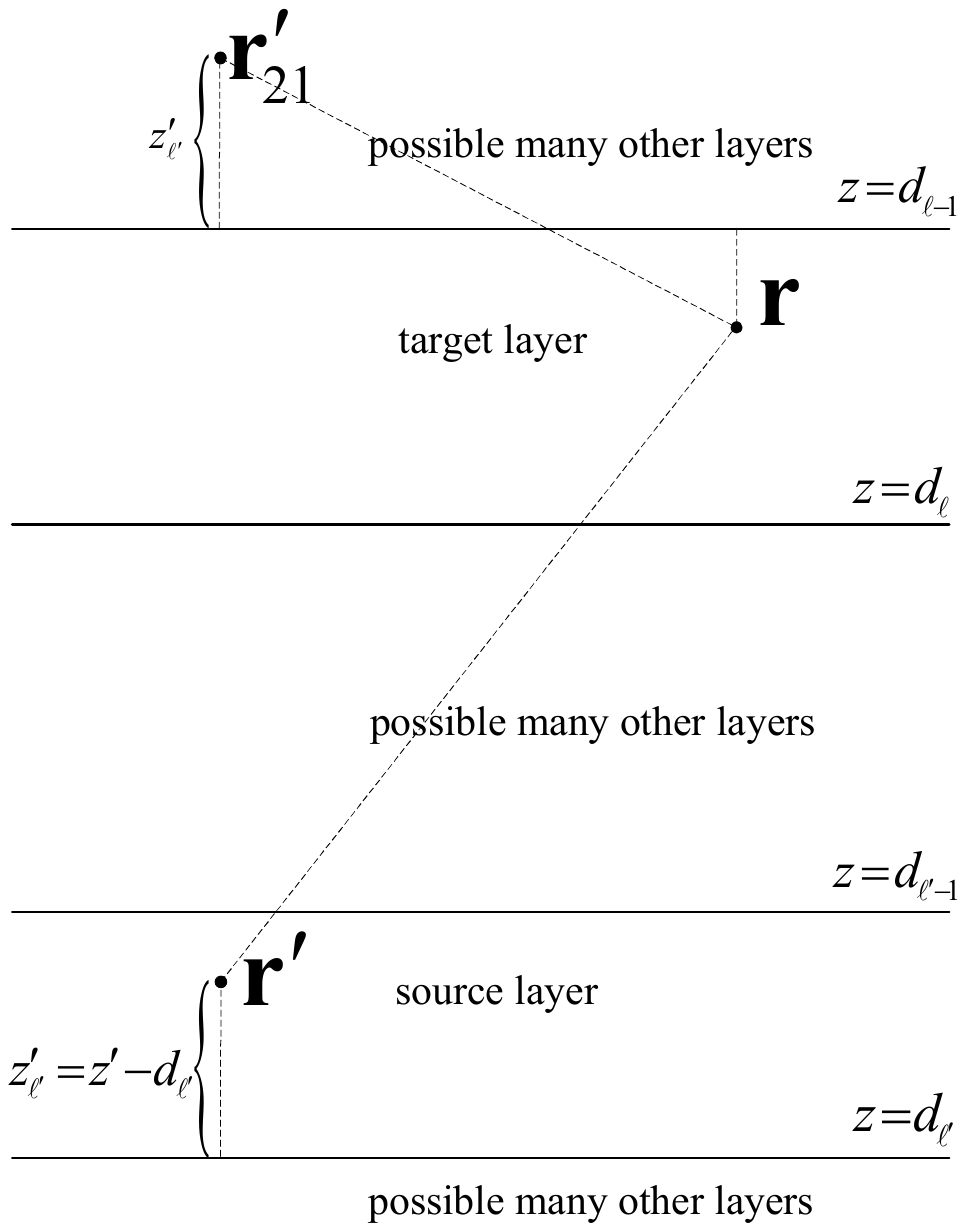}}
	\subfigure[$u_{\ell\ell'}^{12}$]{\includegraphics[scale=0.6]{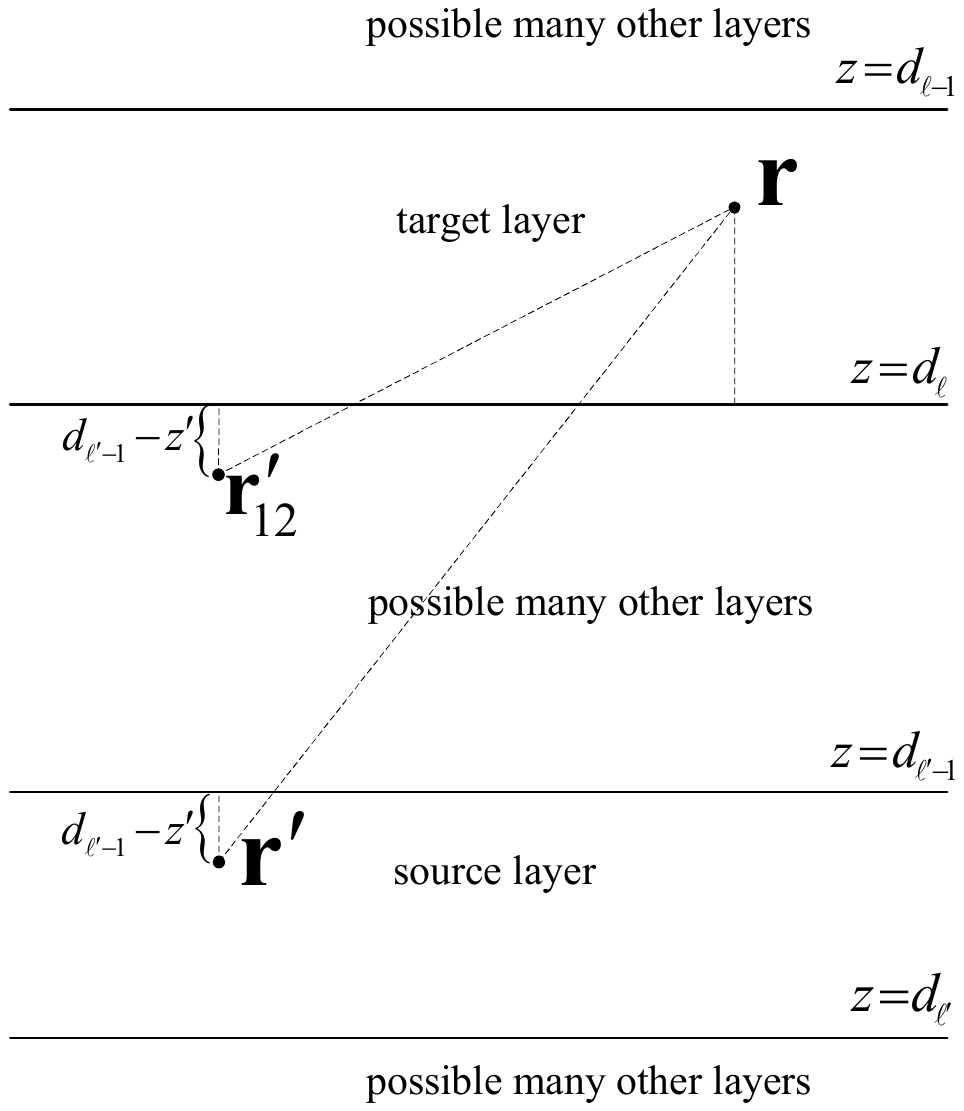}}\quad
	\subfigure[$u_{\ell\ell'}^{22}$]{\includegraphics[scale=0.6]{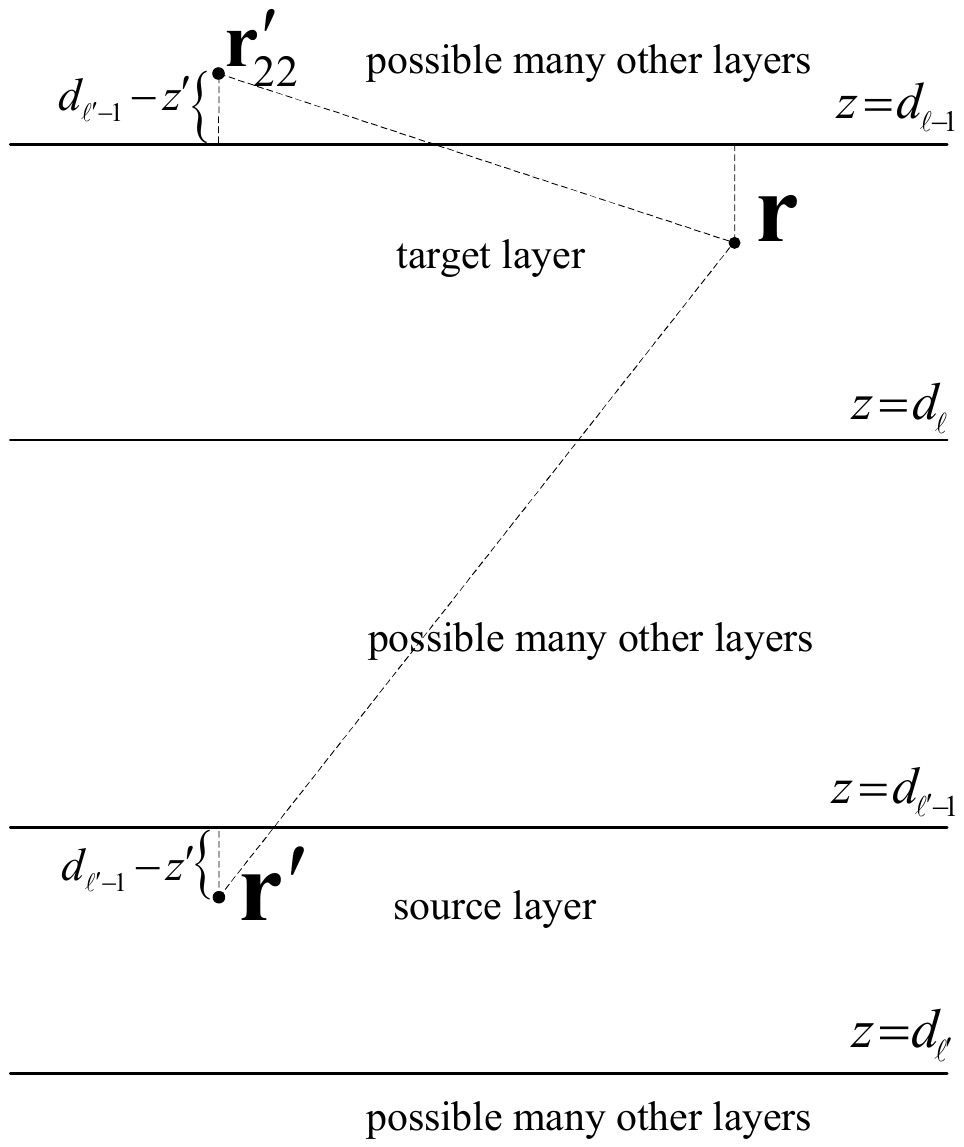}}
	\caption{Location of equivalent polarization sources associated to $u_{\ell\ell'}^{\mathfrak{ab}}$.}%
	\label{sourceimages}%
\end{figure}

On the other hand, the key idea of using hierarchical tree structure in FMM relies on using the Euclidean distance between source and target to determine either direct calculation or truncated ME is used for the computation of the interactions. In applying the free space FMM framework to handle the reaction field components of the layered Green's function, the main problem is the MEs given in \eqref{melayerupgoingimage1} are generally not compatible with the hierarchical tree structure design. In our previous work \cite{wang2019fastlaplace,wang2019fast}, we have introduced the conception of equivalent polarization sources to overcome this problem. The idea was inspired by our theoretical analysis for 2-dimensional Helmholtz equation (cf. \cite{zhang2020exponential}) and numerical tests for 3-dimensional Helmholtz/Laplace equations in layered media (cf. \cite{wang2019fastlaplace,wang2019fast}). Here, the theoretical results in \eqref{errorestimatereaction} further verify the necessity of using the equivalent polarization sources.

According to the convergence results for MEs in Theorem \ref{Thm:reactmeconvergence}, we introduce equivalent polarization sources for the four types of reaction fields (see. Fig. \ref{sourceimages})
\begin{equation}\label{eqpolarizedsource}
\begin{split}
&\bs r'_{11}:=(x', y', d_{\ell}-(z'-d_{\ell'})),\quad\quad\bs r'_{12}:=(x', y', d_{\ell}-(d_{\ell'-1}-z')),\\
&\bs r'_{21}:=(x', y', d_{\ell-1}+(z'-d_{\ell'})),\quad \bs r'_{22}:=(x', y', d_{\ell-1}+(d_{\ell'-1}-z')).
\end{split}
\end{equation}
With the equivalent polarization sources, we define reaction potentials
\begin{equation}\label{reactioncomponentsepsource}
\begin{split}
&\tilde u_{\ell\ell'}^{1\mathfrak b}(\bs r, \bs r'_{1\mathfrak b})=\frac{1}{8\pi^2 }\int_{-\infty}^{\infty}\int_{-\infty}^{\infty}\frac{1}{k_{\rho}}e^{\ri\bs k\cdot(\bs r-\bs r_{1\mathfrak b}')}\sigma_{\ell\ell'}^{1\mathfrak b}(k_{\rho})dk_x dk_y,\\
&\tilde u_{\ell\ell'}^{2\mathfrak b}(\bs r, \bs r'_{2\mathfrak b})=\frac{1}{8\pi^2 }\int_{-\infty}^{\infty}\int_{-\infty}^{\infty}\frac{1}{k_{\rho}}e^{\ri\bs k\cdot\bs\tau(\bs r-\bs r_{2\mathfrak b}')}\sigma_{\ell\ell'}^{2\mathfrak b}(k_{\rho})dk_x dk_y,
\end{split}
\end{equation}
where $z'_{\mathfrak{ab}}$ denotes the $z$-coordinate of $\bs r'_{\mathfrak{ab}}$, i.e.,
\begin{equation}\label{epsourcezcoord}
\begin{split}
&z_{11}^{\prime}=d_{\ell}-(z'-d_{\ell'}),\qquad z_{12}^{\prime}=d_{\ell}-(d_{\ell'-1}-z'),\\
&z_{21}^{\prime}=d_{\ell-1}+(z'-d_{\ell'}),\quad z_{22}^{\prime}=d_{\ell-1}+(d_{\ell'-1}-z').
\end{split}
\end{equation}
Apparently, we can verify that
\begin{equation}\label{expkernelexp}
\bs\tau_{\ell\ell'}^{1\mathfrak{b}}(\bs r, \bs r')=\bs r-\bs r_{1\mathfrak b}',\quad \bs\tau_{\ell\ell'}^{2\mathfrak{b}}(\bs r, \bs r')=\bs\tau(\bs r-\bs r_{2\mathfrak b}'), \quad \mathfrak b=1, 2.
\end{equation}
Therefore, the reaction components of layered Green's function defined in \eqref{generalcomponents} is equal to the introduced reaction potentials associated to equivalent polarization sources, \textit{i.e.},
\begin{equation}\label{generalcomponentsimag}
u_{\ell\ell'}^{1\mathfrak b}(\bs r, \bs r')=\tilde u_{\ell\ell'}^{1\mathfrak b}(\bs r, \bs r'_{1\mathfrak b}),\quad
u_{\ell\ell'}^{2\mathfrak b}(\bs r, \bs r')=\tilde u_{\ell\ell'}^{2\mathfrak b}(\bs r, \bs r'_{2\mathfrak b}),\quad \mathfrak b=1, 2.
\end{equation}

In the FMM for reaction components (cf. \cite{wang2019fastlaplace}), the expressions \eqref{reactioncomponentsepsource} with equivalent polarization sources  \eqref{eqpolarizedsource} are actually used. MEs, LEs and M2L translations for re-expressed reaction field components \eqref{reactioncomponentsepsource} are adopted in the FMM and we have verified numerically that the convergence of the MEs, LEs and M2L translations of \eqref{reactioncomponentsepsource} are determined by the Euclidean distance between target and equivalent polarization source. In the next two subsections, we first review the MEs, LEs and M2Ls for the re-expressed reaction field components \eqref{reactioncomponentsepsource} and then prove that all of them have exponential convergence with rates depends on the Euclidean distance between targets and equivalent polarization sources.

\subsection{MEs, LEs, and M2L translations for reaction filed components using new expressions with equivalent polarization sources} By the definition \eqref{generalintegraldef} and the linear features \eqref{reflection} of $\bs\tau(\bs r)$, the reaction components in \eqref{reactioncomponentsepsource} can be represented as
\begin{equation}\label{reactcompintegralrep}
\begin{split}
&\tilde u_{\ell\ell'}^{1\mathfrak b}(\bs r, \bs r'_{1\mathfrak b})=\frac{1}{8\pi^2 }\mathcal I(\bs r-\bs r_c^{1\mathfrak{b}}-(\bs r_{1\mathfrak{b}}'-\bs r_c^{1\mathfrak{b}});\sigma_{\ell\ell'}^{1\mathfrak{b}})=\frac{1}{8\pi^2 }\mathcal I(\bs r-\bs r_c^t+(\bs r_c^t-\bs r_{1\mathfrak{b}}');\sigma_{\ell\ell'}^{1\mathfrak{b}}),\\
&\tilde u_{\ell\ell'}^{2\mathfrak b}(\bs r, \bs r'_{1\mathfrak b})=\frac{1}{8\pi^2 }\mathcal I(\bs\tau(\bs r-\bs r_c^{2\mathfrak{b}})-\bs\tau(\bs r_{2\mathfrak{b}}'-\bs r_c^{2\mathfrak{b}});\sigma_{\ell\ell'}^{2\mathfrak{b}})=\frac{1}{8\pi^2 }\mathcal I(\bs\tau(\bs r-\bs r_c^t)-\bs\tau(\bs r_c^t-\bs r_{2\mathfrak{b}}');\sigma_{\ell\ell'}^{2\mathfrak{b}}),
\end{split}
\end{equation}
where $\bs r^{\mathfrak {ab}}_c=(x_c^{\mathfrak{ab}},y_c^{\mathfrak{ab}}, z_c^{\mathfrak{ab}} )$ and $\bs r_c^t=(x_c^t, y_c^t, z_c^t)$ are given equivalent polarization source and target centers such that
\begin{equation}\label{imagecentercond}
z_c^{1\mathfrak b}<d_{\ell},  \quad z_c^{2\mathfrak b}>d_{\ell-1},\quad d_{\ell}<z_c^t<d_{\ell-1} .
\end{equation}
These restriction can be met in practice, as we are considering targets in the $\ell$-th layer and the equivalent polarized coordinates are always located either above the interface $z=d_{\ell-1}$ or below the interface $z=d_{\ell}$.

%Similar as the source/target separation in \eqref{sourcetargetseparationsc1}, we have
%\begin{equation}\label{sourcetargetseparationsc}
%e^{\ri\bs k\cdot(\bs r-\bs r_{1\mathfrak b}')}=e^{\ri\bs k\cdot(\bs r-\bs r_c^{1\mathfrak b})}e^{-\ri\bs k\cdot (\bs r_{1\mathfrak b}'-\bs r_c^{1\mathfrak b})},\quad
%e^{\ri\bs k\cdot\bs\tau(\bs r-\bs r_{2\mathfrak b}')}=e^{\ri\bs k\cdot\bs\tau(\bs r-\bs r_c^{2\mathfrak b})}e^{\ri \bs k\cdot\bs\tau(-(\bs r_{2\mathfrak b}'-\bs r_c^{2\mathfrak b}))},
%\end{equation}
%and
%\begin{equation}\label{sourcetargetseparationtc}
%e^{\ri\bs k\cdot(\bs r-\bs r_{1\mathfrak b}')}=e^{\ri\bs k\cdot(\bs r_c^t-\bs r_{1\mathfrak b}')}e^{\ri\bs k\cdot(\bs r-\bs r_c^t)},\quad e^{\ri\bs k\cdot\bs\tau(\bs r-\bs r_{2\mathfrak b}')}=e^{\ri\bs k\cdot\bs\tau(\bs r_c^t-\bs r_{2\mathfrak b}')}e^{\ri\bs k\cdot\bs\tau(\bs r-\bs r_c^t)}.
%\end{equation}
%for $\mathfrak b=1, 2$ by inserting the equivalent polarization source center $\bs r^{\mathfrak {ab}}_c$ and the target center $\bs r_c^t$, respectively.
By \eqref{epsourcezcoord} and conditions in \eqref{imagecentercond}, we have
\begin{equation}\label{coordrestrictions}
z-z_c^{1\mathfrak b}>0,\quad z_c^t-z_{1\mathfrak b}'>0,\quad z_c^{2\mathfrak{b}}-z>0,\quad z_{2\mathfrak{b}}'-z_c^t>0,\quad z-z_{1\mathfrak b}'>0,\quad z_{2\mathfrak{b}}'-z>0.
\end{equation}
Assume the centers $\bs r_c^{\mathfrak{ab}}$ and $\bs r_c^t$ satisfy $|\bs r-\bs r_c^{\mathfrak{ab}}|>|\bs r_{\mathfrak{ab}}'-\bs r_c^{\mathfrak{ab}}|$, and $|\bs r-\bs r_c^t|<|\bs r_c^t-\bs r_{\mathfrak{ab}}'|$, then \eqref{coordrestrictions} and Proposition \ref{Prop:densityprop} implies that theorem \ref{Thm:generalintegral} can be applied to give expansions for the integrals in \eqref{reactcompintegralrep}, \textit{i.e.},
\begin{equation}\label{reactmetaylor}
\begin{split}
\tilde u_{\ell\ell'}^{1\mathfrak b}(\bs r, \bs r'_{1\mathfrak b})=&\frac{1}{8\pi^2 }\sum\limits_{n=0}^{\infty}\int_{-\infty}^{\infty}\int_{-\infty}^{\infty}e^{\ri\bs k\cdot(\bs r-\bs r_c^{1\mathfrak b})}\frac{[-\ri\bs k\cdot (\bs r_{1\mathfrak b}'-\bs r_c^{1\mathfrak b})]^n}{n!k_{\rho}}\sigma_{\ell\ell'}^{1\mathfrak b}(k_{\rho})dk_x dk_y,\\
\tilde u_{\ell\ell'}^{2\mathfrak b}(\bs r, \bs r'_{2\mathfrak b})=&\frac{1}{8\pi^2 }\sum\limits_{n=0}^{\infty}\int_{-\infty}^{\infty}\int_{-\infty}^{\infty}e^{\ri\bs k\cdot\tau(\bs r-\bs r_c^{2\mathfrak b})}\frac{[-\ri \bs k\cdot \tau(\bs r_{2\mathfrak b}'-\bs r_c^{2\mathfrak b})]^n}{n!k_{\rho}}\sigma_{\ell\ell'}^{2\mathfrak b}(k_{\rho})dk_x dk_y
\end{split}
\end{equation}
and
\begin{equation}\label{reactletaylor}
\begin{split}
\tilde u_{\ell\ell'}^{1\mathfrak b}(\bs r, \bs r'_{1\mathfrak b})=&\frac{1}{8\pi^2 }\sum\limits_{n=0}^{\infty}\int_{-\infty}^{\infty}\int_{-\infty}^{\infty}e^{\ri\bs k\cdot(\bs r_c^t-\bs r_{1\mathfrak b}')}\frac{[\ri\bs k\cdot(\bs r-\bs r_c^t)]^n}{n!k_{\rho}}\sigma_{\ell\ell'}^{1\mathfrak b}(k_{\rho})dk_x dk_y,\\
\tilde u_{\ell\ell'}^{2\mathfrak b}(\bs r, \bs r'_{2\mathfrak b})=&\frac{1}{8\pi^2 }\sum\limits_{n=0}^{\infty}\int_{-\infty}^{\infty}\int_{-\infty}^{\infty}e^{\ri\bs k\cdot\tau(\bs r_c^t-\bs r_{2\mathfrak b}')}\frac{[\ri\bs k\cdot\tau(\bs r-\bs r_c^t)]^n}{n!k_{\rho}}\sigma_{\ell\ell'}^{2\mathfrak b}(k_{\rho})dk_x dk_y.
\end{split}
\end{equation}

Further, applying Proposition \ref{lemma3} in expansions \eqref{reactmetaylor} and using identities \eqref{sphrelations} again to simplify the obtained results, we obtain MEs
\begin{equation}\label{melayerupgoingimage}
\begin{split}
\tilde u_{\ell\ell'}^{\mathfrak{ab}}(\bs r, \bs r'_{\mathfrak{ab}})=\sum\limits_{n=0}^{\infty}\sum\limits_{m=-n}^{n}  M_{nm}^{\mathfrak{ab}}\widetilde{\mathcal F}_{nm}^{\mathfrak{ab}}(\bs r, \bs r_c^{\mathfrak{ab}}), \quad M_{nm}^{\mathfrak{ab}}=\frac{c_n^{-2}}{4\pi} (r_c^{\mathfrak{ab}})^n\overline{Y_n^{m}(\theta_c^{\mathfrak{ab}},\varphi_c^{\mathfrak{ab}})},
\end{split}
\end{equation}
at equivalent polarization source centers $\bs r_c^{\mathfrak{ab}}$ and LEs
\begin{equation}\label{lelayerimage}
\begin{split}
\tilde u_{\ell\ell'}^{\mathfrak{ab}}(\bs r, \bs r'_{\mathfrak{ab}})=\sum\limits_{n=0}^{\infty}\sum\limits_{m=-n}^{n} L_{nm}^{\mathfrak{ab}}r_t^nY_n^m(\theta_t,\varphi_t)
\end{split}
\end{equation}
at target center $\bs r_c^t$, respectively. Here, $\widetilde{\mathcal F}_{nm}^{\mathfrak{ab}}(\bs r, \bs r_c^{\mathfrak{ab}})$ are represented by Sommerfeld-type integrals
\begin{equation}\label{mebasis}
\begin{split}
\widetilde{\mathcal F}_{nm}^{1\mathfrak b}(\bs r, \bs r_c^{1\mathfrak b})=&\frac{(-1)^{n}c_n^2C_n^m}{2\pi}\int_{-\infty}^{\infty}\int_{-\infty}^{\infty}e^{\ri\bs k\cdot(\bs r-\bs r_c^{1\mathfrak b})}\sigma_{\ell\ell'}^{1\mathfrak b}(k_{\rho})k_{\rho}^{n-1}e^{\ri m\alpha}dk_x dk_y,\\
\widetilde{\mathcal F}_{nm}^{2\mathfrak b}(\bs r, \bs r_c^{2\mathfrak b})=&\frac{(-1)^mc_n^2C_n^m}{2\pi}\int_{-\infty}^{\infty}\int_{-\infty}^{\infty}e^{\ri\bs k\cdot\tau(\bs r-\bs r_c^{2\mathfrak b})}\sigma_{\ell\ell'}^{2\mathfrak b}(k_{\rho})k_{\rho}^{n-1}e^{\ri m\alpha}dk_x dk_y,
\end{split}
\end{equation}
and the local expansion coefficients are given by
\begin{equation}\label{lecoeffimage}
\begin{split}
L_{nm}^{1\mathfrak b}=&\frac{C_n^m}{8\pi^2 }\int_{-\infty}^{\infty}\int_{-\infty}^{\infty}e^{\ri\bs k\cdot(\bs r_c^t-\bs r_{1\mathfrak b}')}\sigma_{\ell\ell^{\prime}}^{1\mathfrak b}(k_{\rho})k_{\rho}^{n-1}e^{-\ri m\alpha}dk_x dk_y,\\
L_{nm}^{2\mathfrak b}=&\frac{(-1)^{n+m}C_n^m}{8\pi^2 }\int_{-\infty}^{\infty}\int_{-\infty}^{\infty}e^{\ri\bs k\cdot\tau(\bs r_c^t-\bs r_{2\mathfrak b}')}\sigma_{\ell\ell^{\prime}}^{2\mathfrak b}(k_{\rho})k_{\rho}^{n-1}e^{-\ri m\alpha}dk_x dk_y.
\end{split}
\end{equation}

%Recall convergence results \eqref{melayerapp} and the fact \eqref{distancerelation},
%we conclude that ME \eqref{melayerupgoingimage} now satisfies
%\begin{equation}
%\begin{split}
%\Big|\tilde u_{\ell\ell'}^{\mathfrak{ab}}(\bs r, \bs r'_{\mathfrak{ab}})-\sum\limits_{n=0}^{p}\sum\limits_{m=-n}^{n}  M_{nm}^{\mathfrak{ab}}\widetilde{\mathcal F}_{nm}^{\mathfrak{ab}}(\bs r, \bs r_c^{\mathfrak{ab}})\Big|=\mathcal{O}\left(\left(\frac{|\bs r'_{\mathfrak{ab}}-\bs r_c^{\mathfrak{ab}}|}{|\bs r-\bs r_c^{\mathfrak{ab}}|}\right)^p\right).
%\end{split}
%\end{equation}
%As a result, the Euclidean distance  $|\bs r-\bs r'_{\mathfrak{ab}}|$ can be used to determine if multipole expansions \eqref{melayerupgoingimage} are good approximations to the far reaction field. Similarly, the local expansion \eqref{lelayerimage} for $\tilde u_{\ell\ell'}^{\mathfrak{ab}}(\bs r, \bs r'_{\mathfrak{ab}})$ have a convergence of order $\mathcal{O}\left(\left(\frac{|\bs r-\bs r_c^l|}{|\bs r_c^l-\bs r'_{\mathfrak{ab}}|}\right)^p\right)$. These convergence results ensure that the hierarchical design can be applied in FMM with kernels $\tilde u_{\ell\ell'}^{\mathfrak{ab}}(\bs r, \bs r'_{\mathfrak{ab}})$, ME \eqref{melayerupgoingimage} and LE \eqref{lelayerimage}.

Next, we discuss the center shifting and translation operators for ME \eqref{melayerupgoingimage} and LE \eqref{lelayerimage}. A desirable feature of the expansions of reaction components discussed above is that the formula \eqref{melayerupgoingimage} for the ME coefficients and the formula \eqref{lelayerimage} for the LE have exactly the same form as the formulas of ME coefficients and LE for free space Green's function. Therefore, we can see that center shifting for multipole and local expansions are exactly the same as free space case given in \eqref{metome}.

We only need to derive the translation operator from ME \eqref{melayerupgoingimage} to LE \eqref{lelayerimage}.
%For exponential functions in \eqref{sourcetargetseparationsc}, we have further splitting
%\begin{equation}\label{M2Lseparation}
%e^{\ri\bs k\cdot(\bs r-\bs r_c^{1\mathfrak b})}=e^{\ri\bs k\cdot(\bs r_c^t-\bs r_c^{1\mathfrak b})}e^{\ri\bs k\cdot (\bs r-\bs r_c^t)},\quad
%e^{\ri\bs k\cdot\tau(\bs r-\bs r_c^{2\mathfrak b})}=e^{\ri\bs k\cdot\bs\tau(\bs r_c^t-\bs r_c^{2\mathfrak b})}e^{\ri\bs k\cdot \bs\tau(\bs r-\bs r_c^t)}.
%\end{equation}
As in \eqref{reactcompintegralrep}, the reaction components in \eqref{reactioncomponentsepsource} can be represented as
\begin{equation}\label{reactcompme2lerep}
\begin{split}
&\tilde u_{\ell\ell'}^{1\mathfrak b}(\bs r, \bs r'_{1\mathfrak b})=\frac{1}{8\pi^2 }\mathcal I(\bs r-\bs r_c^t+(\bs r_c^t-\bs r_c^{1\mathfrak{b}})-(\bs r_{1\mathfrak{b}}'-\bs r_c^{1\mathfrak{b}});\sigma_{\ell\ell'}^{1\mathfrak{b}}),\\
&\tilde u_{\ell\ell'}^{2\mathfrak b}(\bs r, \bs r'_{1\mathfrak b})=\frac{1}{8\pi^2 }\mathcal I(\bs\tau(\bs r-\bs r_c^t)+\bs\tau(\bs r_c^t-\bs r_c^{2\mathfrak{b}})-\bs\tau(\bs r_{2\mathfrak{b}}'-\bs r_c^{2\mathfrak{b}});\sigma_{\ell\ell'}^{2\mathfrak{b}}).
\end{split}
\end{equation}
Apparently, from \eqref{imagecentercond}, we have
\begin{equation}\label{MELEcentercond}
z_c^t-z_c^{1\mathfrak b}>0, \quad z_c^{2\mathfrak b}-z_c^t>0.
\end{equation}
Assume the given centers $\bs r_c^{\mathfrak{ab}}$ and $\bs r_c^t$ satisfy $|\bs r_c^t-\bs r_c^{\mathfrak{ab}}|>|\bs r_{\mathfrak{ab}}'-\bs r_c^{\mathfrak{ab}}|+|\bs r-\bs r_c^t|$, then \eqref{coordrestrictions}, \eqref{MELEcentercond} and Proposition \ref{Prop:densityprop} implies that Theorem \ref{Thm:generalintegral} can be applied to give expansions for the integrals in \eqref{reactcompme2lerep}, \textit{i.e.},
\begin{equation}\label{reactme2letaylor}
\begin{split}
\tilde u_{\ell\ell'}^{1\mathfrak b}(\bs r, \bs r'_{1\mathfrak b})=&\frac{1}{8\pi^2 }\sum\limits_{n=0}^{\infty}\sum\limits_{\nu=0}^{\infty}\mathcal I_{n\nu}(\bs r_c^t-\bs r_c^{1\mathfrak{b}}, \bs r-\bs r_c^t, -(\bs r_{1\mathfrak{b}}'-\bs r_c^{1\mathfrak{b}});\sigma_{\ell\ell'}^{1\mathfrak b}),\\
\tilde u_{\ell\ell'}^{2\mathfrak b}(\bs r, \bs r'_{2\mathfrak b})=&\frac{1}{8\pi^2 }\sum\limits_{n=0}^{\infty}\sum\limits_{\nu=0}^{\infty}\mathcal I_{n\nu}(\bs\tau(\bs r_c^t-\bs r_c^{2\mathfrak{b}}), \bs\tau(\bs r-\bs r_c^t), -\bs\tau(\bs r_{1\mathfrak{b}}'-\bs r_c^{1\mathfrak{b}});\sigma_{\ell\ell'}^{2\mathfrak b}).
\end{split}
\end{equation}
Applying Proposition \ref{lemma3} to the integrand of $\mathcal{I}_{n\nu}(\bs r, \bs r', \bs r'',\sigma)$, we obtain
the translation from ME \eqref{melayerupgoingimage} to LE \eqref{lelayerimage}, i.e.,
\begin{equation}\label{metoleimage1}
L_{nm}^{\mathfrak{ab}}=\sum\limits_{n'=0}^{\infty}\sum\limits_{|m'|=0}^{n'}T_{nm,n'm'}^{\mathfrak{ab}}M_{n'm'}^{\mathfrak{ab}},
\end{equation}
where the translation operators are given as follows
\begin{equation}\label{metoleimage2}
\begin{split}
T_{nm,n'm'}^{1\mathfrak{b}}=&\frac{D_{nmn'm'}^1}{2\pi}\int_{-\infty}^{\infty}\int_{-\infty}^{\infty}e^{\ri\bs k\cdot(\bs r_c^t-\bs r_c^{1\mathfrak b})}\sigma_{\ell\ell'}^{1\mathfrak{b}}(k_{\rho})k_{\rho}^{n+n'-1}e^{\ri (m'-m)\alpha}dk_x dk_y,\\
T_{nm,n'm'}^{2\mathfrak{b}}=&\frac{D_{nmn'm'}^2}{2\pi}\int_{-\infty}^{\infty}\int_{-\infty}^{\infty}e^{\ri\bs k\cdot\tau(\bs r_c^t-\bs r_c^{2\mathfrak b})}\sigma_{\ell\ell'}^{2\mathfrak{b}}(k_{\rho})k_{\rho}^{n+n'-1}e^{\ri (m'-m)\alpha}dk_x dk_y,
\end{split}
\end{equation}
where
$$D_{nmn'm'}^1=(-1)^{n'}c_{n'}^2C_n^mC_{n'}^{m'},\quad D_{nmn'm'}^2=(-1)^{n+m+m'}c_{n'}^2C_n^mC_{n'}^{m'}.$$

\subsection{Exponential convergence of MEs, LEs and corresponding translation operators for reaction components} Let $\Phi_{\ell\ell',{\rm in}}^{\mathfrak{ab}}(\bs r)$ and $\Phi_{\ell\ell',{\rm out}}^{\mathfrak{ab}}(\bs r)$ be general reaction components of potentials induced by all equivalent polarizaion sources inside a given source box $B_s^{\mathfrak{ab}}$ centered at $\bs r_c^{\mathfrak{ab}}$ and far away from a given target box $B_t$ centered at $\bs r_c^t$, i.e.,
\begin{equation}\label{reactionpotentials}
\Phi_{\ell\ell',{\rm in}}^{\mathfrak{ab}}(\bs r)=\sum\limits_{j\in \mathcal J}Q_{\ell' j}\tilde u_{\ell\ell'}^{\mathfrak{ab}}(\bs r, \bs r_{\ell'j}^{\mathfrak{ab}}),\quad \Phi_{\ell\ell',{\rm out}}^{\mathfrak{ab}}(\bs r)=\sum\limits_{j\in \mathcal K}Q_{\ell' j}\tilde u_{\ell\ell'}^{\mathfrak{ab}}(\bs r, \bs r_{\ell'j}^{\mathfrak{ab}}),
\end{equation}
where $\mathcal J$ and $\mathcal K$ are the sets of indices of equivalent polarizaion sources inside $B_s^{\mathfrak{ab}}$ and far away from $B_t$, respectively.
The FMM for the reaction component $\Phi_{\ell\ell'}^{\mathfrak{ab}}(\bs r)$ use ME
\begin{equation}\label{mereact}
\Phi_{\ell\ell',{\rm in}}^{\mathfrak{ab}}(\bs r)=\sum\limits_{n=0}^{\infty}\sum\limits_{m=-n}^nM_{nm}^{\mathfrak{ab},\rm in}\widetilde{\mathcal F}_{nm}^{\mathfrak{ab}}(\bs r, \bs r_c^{\mathfrak{ab}}),
\end{equation}
at any target points far away from $B_s^{\mathfrak{ab}}$ and LE
\begin{equation}\label{lereactapprox}
\Phi_{\ell\ell',{\rm out}}^{\mathfrak{ab}}(\bs r)=\sum\limits_{n=0}^{\infty}\sum\limits_{m=-n}^nL_{nm}^{\mathfrak{ab},\rm out}r_t^nY_n^m(\theta_t,\varphi_t),
\end{equation}
inside $B_t$, where the coefficients are given by
\begin{equation}\label{reactexpcoefbox}
\begin{split}
M_{nm}^{\mathfrak{ab},\rm in}=&\frac{c_n^{-2}}{4\pi}\sum\limits_{j\in\mathcal J}Q_{\ell' j}(r_{\ell' j}^{\mathfrak{ab}})^n\overline{Y_n^{m}(\theta_{\ell' j}^{\mathfrak{ab}},\varphi_{\ell' j}^{\mathfrak{ab}})},\\
L_{nm}^{1\mathfrak b,\rm out}=&\frac{C_n^m}{8\pi^2 }\int_{-\infty}^{\infty}\int_{-\infty}^{\infty}e^{\ri\bs k\cdot(\bs r_c^t- \bs r_{\ell'j}^{1\mathfrak b})}\sigma_{\ell\ell^{\prime}}^{1\mathfrak b}(k_{\rho})k_{\rho}^{n-1}e^{-\ri m\alpha}dk_x dk_y,\\
L_{nm}^{2\mathfrak b,\rm out}=&\frac{(-1)^{n+m}C_n^m}{8\pi^2 }\int_{-\infty}^{\infty}\int_{-\infty}^{\infty}e^{\ri\bs k\cdot\tau(\bs r_c^t-\bs r_{\ell'j}^{2\mathfrak b})}\sigma_{\ell\ell^{\prime}}^{2\mathfrak b}(k_{\rho})k_{\rho}^{n-1}e^{-\ri m\alpha}dk_x dk_y,
\end{split}
\end{equation}
$(r_t, \theta_t, \phi_t)$ and $(r_{\ell' j}^{\mathfrak{ab}}, \theta_{\ell' j}^{\mathfrak{ab}}, \phi_{\ell' j}^{\mathfrak{ab}})$ are the spherical coordinates of $\bs r-\bs r_c^t$ and $\bs r_{\ell' j}^{\mathfrak{ab}}-\bs r_c^{\mathfrak{ab}}$.
These expansions can be obtained by applying expansions \eqref{melayerupgoingimage}-\eqref{lelayerimage} to each $\tilde u_{\ell\ell'}^{\mathfrak{ab}}(\bs r, \bs r_{\ell'j}^{\mathfrak{ab}})$ involved in the summations in \eqref{reactionpotentials}.
\begin{figure}[ht!]\label{reactionmeandle}
	\centering
	\includegraphics[scale=0.6]{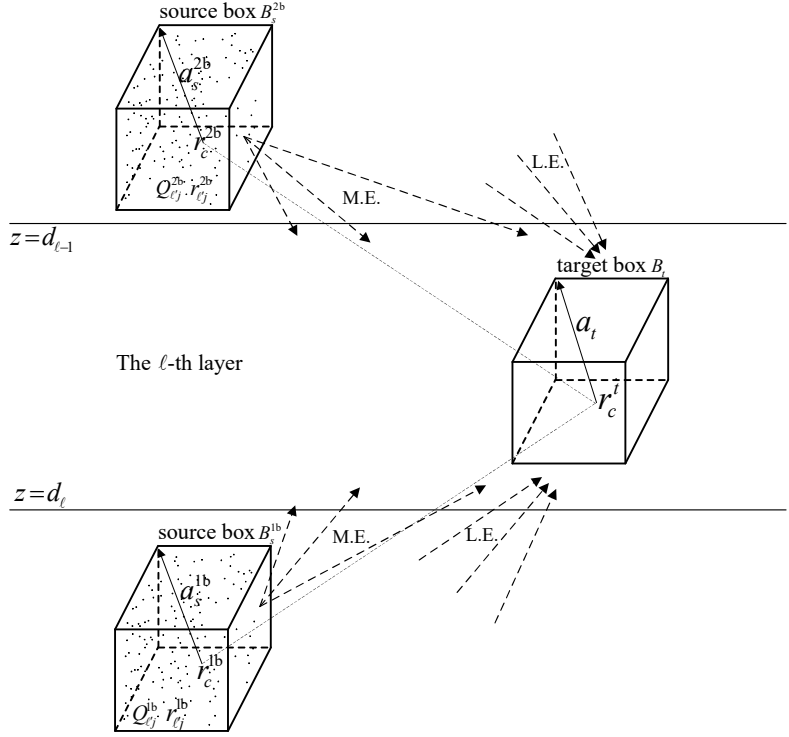}
	\caption{An illustration of equivalent polarization source and target box for the reaction components in the $\ell$-th layer due to sources in $\ell'$-th layer.}
\end{figure}
\begin{theorem}\label{reactmeconvergence1}
	Suppose $a_s^{\mathfrak{ab}}$ is the radius of the circumscribed sphere of the source box $B_s^{\mathfrak{ab}}$, $\bs r$ is a point outside the circumscribed sphere of $B_s^{\mathfrak{ab}}$, i.e., $r_s^{\mathfrak ab}:=|\bs r-\bs r_c^{\mathfrak{ab}}|>a_s^{\mathfrak ab}$, then ME \eqref{mereact} has error estimate
	\begin{equation}\label{reactmereactapprox}
	\Big|\Phi_{\ell\ell',{\rm in}}^{\mathfrak{ab}}(\bs r)-\sum\limits_{n=0}^{p}\sum\limits_{m=-n}^nM_{nm}^{\mathfrak{ab},\rm in}\widetilde{\mathcal F}_{nm}^{\mathfrak{ab}}(\bs r, \bs r_c^{\mathfrak{ab}})\Big|\leq \frac{1}{4\pi}\frac{Q_{\mathcal J}\bs M_{\sigma_{\ell\ell'}^{\mathfrak{ab}}}}{r_s^{\mathfrak{ab}}-a_s^{\mathfrak{ab}}}\Big(\frac{a_s^{\mathfrak{ab}}}{r_s^{\mathfrak{ab}}}\Big)^{p+1},
	\end{equation}
	where $\bs M_{\sigma_{\ell\ell'}^{\mathfrak{ab}}}$ is the bound of  $\sigma_{\ell\ell'}^{\mathfrak{ab}}(k_{\rho})$ in the right half complex plane,
	\begin{equation}\label{totalcharge}
	Q_{\mathcal J}=\sum\limits_{j\in\mathcal J}|Q_{\ell' j}|.
	\end{equation}
\end{theorem}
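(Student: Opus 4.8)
The plan is to reduce the statement to a single equivalent polarization source and then invoke Theorem \ref{Thm:generalintegral} together with Proposition \ref{Prop:densityprop}. First I would observe that, by \eqref{reactionpotentials}, \eqref{mereact} and \eqref{reactexpcoefbox}, both $\Phi_{\ell\ell',{\rm in}}^{\mathfrak{ab}}(\bs r)$ and its truncated ME are linear combinations over $j\in\mathcal J$, with weights $Q_{\ell'j}$, of the single-source quantities $\tilde u_{\ell\ell'}^{\mathfrak{ab}}(\bs r,\bs r_{\ell'j}^{\mathfrak{ab}})$ and $\sum_{n=0}^{p}\sum_{m=-n}^{n}M_{nm}^{\mathfrak{ab}}(j)\widetilde{\mathcal F}_{nm}^{\mathfrak{ab}}(\bs r,\bs r_c^{\mathfrak{ab}})$, where $M_{nm}^{\mathfrak{ab}}(j)$ is the coefficient in \eqref{melayerupgoingimage} attached to the source $\bs r_{\ell'j}^{\mathfrak{ab}}$ and $\widetilde{\mathcal F}_{nm}^{\mathfrak{ab}}$ is independent of $j$. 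Hence, by the triangle inequality, it suffices to bound the single-source truncation error by $\tfrac{1}{4\pi}\bs M_{\sigma_{\ell\ell'}^{\mathfrak{ab}}}(r_s^{\mathfrak{ab}}-a_s^{\mathfrak{ab}})^{-1}(a_s^{\mathfrak{ab}}/r_s^{\mathfrak{ab}})^{p+1}$ uniformly over equivalent polarization sources lying in $B_s^{\mathfrak{ab}}$, and then sum the result against $|Q_{\ell'j}|$.

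Second, I would identify the single-source truncated ME with a truncated Taylor sum of the scalar integral $\mathcal I$. Using \eqref{reactcompintegralrep}, write $\tilde u_{\ell\ell'}^{1\mathfrak b}(\bs r,\bs r_{\ell'j}^{1\mathfrak b})=\tfrac{1}{8\pi^2}\mathcal I(\bs a_{1\mathfrak b}+\bs b_{1\mathfrak b,j};\sigma_{\ell\ell'}^{1\mathfrak b})$ with $\bs a_{1\mathfrak b}=\bs r-\bs r_c^{1\mathfrak b}$, $\bs b_{1\mathfrak b,j}=-(\bs r_{\ell'j}^{1\mathfrak b}-\bs r_c^{1\mathfrak b})$, and $\tilde u_{\ell\ell'}^{2\mathfrak b}(\bs r,\bs r_{\ell'j}^{2\mathfrak b})=\tfrac{1}{8\pi^2}\mathcal I(\bs a_{2\mathfrak b}+\bs b_{2\mathfrak b,j};\sigma_{\ell\ell'}^{2\mathfrak b})$ with $\bs a_{2\mathfrak b}=\bs\tau(\bs r-\bs r_c^{2\mathfrak b})$, $\bs b_{2\mathfrak b,j}=-\bs\tau(\bs r_{\ell'j}^{2\mathfrak b}-\bs r_c^{2\mathfrak b})$. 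Recall that the ME \eqref{melayerupgoingimage} was derived from the term-by-term Taylor expansion \eqref{reactmetaylor} by applying the addition identity in Theorem \ref{lemma3} (followed by the simplifications \eqref{sphrelations}), which is a purely algebraic manipulation at each fixed degree $n$; therefore, for every $p\geq 0$,
\begin{equation*}
\sum\limits_{n=0}^{p}\sum\limits_{m=-n}^{n}M_{nm}^{\mathfrak{ab}}(j)\,\widetilde{\mathcal F}_{nm}^{\mathfrak{ab}}(\bs r,\bs r_c^{\mathfrak{ab}})=\frac{1}{8\pi^2}\sum\limits_{n=0}^{p}\mathcal I_n\big(\bs a_{\mathfrak{ab}},\bs b_{\mathfrak{ab},j};\sigma_{\ell\ell'}^{\mathfrak{ab}}\big),
\end{equation*}
so the estimate reduces to the truncation error of the first expansion in \eqref{finaltwointegralexp}.

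Third, I would verify the hypotheses of Theorem \ref{Thm:generalintegral} and conclude. Proposition \ref{Prop:densityprop} gives that $\sigma_{\ell\ell'}^{\mathfrak{ab}}(k_{\rho})$ is analytic in $\{\mathfrak{Re}\,k_{\rho}>0\}$ and bounded there by $\bs M_{\sigma_{\ell\ell'}^{\mathfrak{ab}}}$. Positivity of the $z$-components of $\bs a_{\mathfrak{ab}}$ and of $\bs a_{\mathfrak{ab}}+\bs b_{\mathfrak{ab},j}$ follows from \eqref{imagecentercond} and \eqref{coordrestrictions}: for $\mathfrak a=1$ the relevant $z$-coordinates are $z-z_c^{1\mathfrak b}>0$ and $z-z_{\ell'j}^{1\mathfrak b}>0$, while for $\mathfrak a=2$ the reflection $\bs\tau$ (which preserves lengths and the bilinear pairing with $\bs k$) turns $z_c^{2\mathfrak b}-z>0$ and $z_{\ell'j}^{2\mathfrak b}-z>0$ into positive $z$-components. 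The separation condition $|\bs a_{\mathfrak{ab}}|>|\bs b_{\mathfrak{ab},j}|$ holds since $|\bs a_{\mathfrak{ab}}|=r_s^{\mathfrak{ab}}>a_s^{\mathfrak{ab}}\geq|\bs r_{\ell'j}^{\mathfrak{ab}}-\bs r_c^{\mathfrak{ab}}|=|\bs b_{\mathfrak{ab},j}|$. Then \eqref{reactionintegralestimate1} yields
\begin{equation*}
\Big|\tilde u_{\ell\ell'}^{\mathfrak{ab}}(\bs r,\bs r_{\ell'j}^{\mathfrak{ab}})-\frac{1}{8\pi^2}\sum\limits_{n=0}^{p}\mathcal I_n(\bs a_{\mathfrak{ab}},\bs b_{\mathfrak{ab},j};\sigma_{\ell\ell'}^{\mathfrak{ab}})\Big|\leq\frac{\bs M_{\sigma_{\ell\ell'}^{\mathfrak{ab}}}}{4\pi}\,\frac{1}{r_s^{\mathfrak{ab}}-r_{\ell'j}^{\mathfrak{ab}}}\Big(\frac{r_{\ell'j}^{\mathfrak{ab}}}{r_s^{\mathfrak{ab}}}\Big)^{p+1},
\end{equation*}
where $r_{\ell'j}^{\mathfrak{ab}}=|\bs r_{\ell'j}^{\mathfrak{ab}}-\bs r_c^{\mathfrak{ab}}|\leq a_s^{\mathfrak{ab}}$. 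Since $t\mapsto t^{p+1}/(r_s^{\mathfrak{ab}}-t)$ is increasing on $[0,r_s^{\mathfrak{ab}})$, the right-hand side is at most $\tfrac{\bs M_{\sigma_{\ell\ell'}^{\mathfrak{ab}}}}{4\pi}(r_s^{\mathfrak{ab}}-a_s^{\mathfrak{ab}})^{-1}(a_s^{\mathfrak{ab}}/r_s^{\mathfrak{ab}})^{p+1}$; multiplying by $|Q_{\ell'j}|$, summing over $j\in\mathcal J$, and recalling $Q_{\mathcal J}=\sum_{j\in\mathcal J}|Q_{\ell'j}|$ gives \eqref{reactmereactapprox}.

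The hard part is not deep but demands care: it is the identification in the second step, namely checking that the partial sums of the spherical-harmonic expansion \eqref{melayerupgoingimage} coincide exactly with the partial sums of the scalar Taylor series \eqref{reactmetaylor}, keeping track of the sign and reflection factors produced by Theorem \ref{lemma3} and \eqref{sphrelations} in the $\mathfrak a=2$ case, together with the verification that all the positivity and well-separatedness hypotheses of Theorem \ref{Thm:generalintegral} hold uniformly over $j\in\mathcal J$. Everything after that is a direct application of Theorem \ref{Thm:generalintegral} and Proposition \ref{Prop:densityprop}.
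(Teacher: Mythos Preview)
Your proposal is correct and follows essentially the same route as the paper: reduce by linearity to a single equivalent polarization source, identify the truncated spherical-harmonic ME with the truncated Taylor sum $\tfrac{1}{8\pi^2}\sum_{n=0}^p\mathcal I_n$ via Theorem~\ref{lemma3}, verify the sign and separation hypotheses from \eqref{coordrestrictions} and the box geometry, and then apply the estimate \eqref{reactionintegralestimate1} of Theorem~\ref{Thm:generalintegral} together with Proposition~\ref{Prop:densityprop}. Your monotonicity remark for replacing $r_{\ell'j}^{\mathfrak{ab}}$ by $a_s^{\mathfrak{ab}}$ is exactly what the paper does implicitly in its last step.
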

\begin{proof}
	As the MEs \eqref{melayerupgoingimage} are obtained by directly applying Proposition \ref{lemma3} to the Taylor expansions  \eqref{reactmetaylor}, we have
	\begin{equation}\label{reactmereformula}
	\begin{split}
	\sum\limits_{n=0}^{p}\sum\limits_{m=-n}^nM_{nm}^{1\mathfrak{b},\rm in}\widetilde{\mathcal F}_{nm}^{1\mathfrak{b}}(\bs r, \bs r_c^{1\mathfrak{b}})
	=&\frac{1}{8\pi^2 }\sum\limits_{j\in\mathcal J}Q_{\ell' j}\sum\limits_{n=0}^{p}\mathcal I_n(\bs r-\bs r_c^{1\mathfrak{b}},-({\bs r_{\ell' j}^{1\mathfrak{b}}-\bs r_c^{1\mathfrak{b}}});\sigma_{\ell\ell'}^{1\mathfrak{b}}),\\
	\sum\limits_{n=0}^{p}\sum\limits_{m=-n}^nM_{nm}^{2\mathfrak{b},\rm in}\widetilde{\mathcal F}_{nm}^{2\mathfrak{b}}(\bs r, \bs r_c^{2\mathfrak{b}})
	=&\frac{1}{8\pi^2 }\sum\limits_{j\in\mathcal J}Q_{\ell' j}\sum\limits_{n=0}^{p}\mathcal I_n(\tau(\bs r-\bs r_c^{2\mathfrak b}), -\tau({\bs r_{\ell' j}^{2\mathfrak{b}}-\bs r_c^{2\mathfrak{b}}});\sigma_{\ell\ell'}^{2\mathfrak{b}}).
	\end{split}
	\end{equation}
	By conditions in \eqref{coordrestrictions}, we have $z-z_c^{1\mathfrak{b}}>0$ and $z_c^{2\mathfrak b}-z>0$. Together with the assumption $|\bs r-\bs r_c^{\mathfrak{ab}}|>a_s^{\mathfrak{ab}}\geq|\bs r_{\ell'j}^{\mathfrak{ab}}-\bs r_c^{\mathfrak{ab}}|$, we can apply the truncation error estimates  \eqref{reactionintegralestimate1} to obtain
	\begin{equation*}
	\begin{split}
	&\Big|\Phi_{\ell\ell',{\rm in}}^{1\mathfrak{b}}(\bs r)-\sum\limits_{n=0}^{p}\sum\limits_{m=-n}^nM_{nm}^{1\mathfrak{b},\rm in}\widetilde{\mathcal F}_{nm}^{1\mathfrak{b}}(\bs r, \bs r_c^{1\mathfrak{b}})\Big|\\
	\leq&\frac{1}{8\pi^2}\sum_{j\in\mathcal J}|Q_{\ell'j}|\Big|\mathcal I(\bs r, \bs r_{\ell'j}^{1\mathfrak{b}}; \sigma_{\ell\ell'}^{1\mathfrak{b}})-\sum\limits_{n=0}^{p}\mathcal I_n(\bs r-\bs r_c^{1\mathfrak{b}},-({\bs r_{\ell' j}^{1\mathfrak{b}}-\bs r_c^{1\mathfrak{b}}});\sigma_{\ell\ell'}^{1\mathfrak{b}})\Big|,\\
	\leq&\sum_{j\in\mathcal J}|Q_{\ell'j}|\frac{ (4\pi)^{-1}\bs M_{\sigma_{\ell\ell'}^{1\mathfrak b}}}{(|\bs r-\bs r_c^{1\mathfrak{b}}|-|{\bs r_{\ell' j}^{1\mathfrak{b}}-\bs r_c^{1\mathfrak{b}}}|)}\Big|\frac{{\bs r_{\ell' j}^{1\mathfrak{b}}-\bs r_c^{1\mathfrak{b}}}}{\bs r-\bs r_c^{1\mathfrak{b}}}\Big|^{p+1},
	\end{split}
	\end{equation*}
	and similarly
	\begin{equation*}
	\Big|\Phi_{\ell\ell',{\rm in}}^{2\mathfrak{b}}(\bs r)-\sum\limits_{n=0}^{p}\sum\limits_{m=-n}^nM_{nm}^{2\mathfrak{b},\rm in}\widetilde{\mathcal F}_{nm}^{2\mathfrak{b}}(\bs r, \bs r_c^{2\mathfrak{b}})\Big|\leq\sum_{j\in\mathcal J}\frac{(4\pi)^{-1}|Q_{\ell'j}| \bs M_{\sigma_{\ell\ell'}^{2\mathfrak b}}}{(|\bs r-\bs r_c^{2\mathfrak{b}}|-|{\bs r_{\ell' j}^{2\mathfrak{b}}-\bs r_c^{2\mathfrak{b}}}|)}\Big|\frac{{\bs r_{\ell' j}^{2\mathfrak{b}}-\bs r_c^{2\mathfrak{b}}}}{\bs r-\bs r_c^{2\mathfrak{b}}}\Big|^{p+1}.
	\end{equation*}
	Consequently, the error estimate \eqref{reactmereactapprox} follows by applying the above estimates in \eqref{reactmereformula} with the assumption $|\bs r_{\ell'j}^{\mathfrak{ab}}-\bs r_c^{\mathfrak{ab}}|\leq a_s^{\mathfrak ab}<r_s^{\mathfrak ab}$.
\end{proof}

Following a similar proof, we have the error estimate for the truncated LE as follows:
\begin{theorem}\label{reactleconvergence}
	Suppose $a_t$ is the radius of the circumscribed sphere of the target box $B_t$, $\bs r$ is a point inside $B_t$, $\mathcal K$ is the set of indices of all charges $(Q_{\ell'j},\bs r_{\ell' j}^{\mathfrak{ab}})$ such that $|\bs r_{\ell' j}^{\mathfrak{ab}}-\bs r_c^t|>a_t$, then the LE \eqref{lereactapprox} has error estimate
	\begin{equation}\label{reactlereactapprox}
	\Big|\Phi_{\ell\ell',{\rm out}}^{\mathfrak{ab}}(\bs r)-\sum\limits_{n=0}^{p}\sum\limits_{m=-n}^nL_{nm}^{\mathfrak{ab},\rm out}r_t^nY_n^m(\theta_t,\varphi_t)\Big|\leq \frac{1}{4\pi}\frac{Q_{\mathcal K} \bs M_{\sigma_{\ell\ell'}^{\mathfrak{ab}}} }{a_t-r_t}\Big(\frac{r_t}{a_t}\Big)^{p+1},
	\end{equation}
	where $\bs M_{\sigma_{\ell\ell'}^{\mathfrak{ab}}}$ is the bound of  $\sigma_{\ell\ell'}^{\mathfrak{ab}}(k_{\rho})$ in the right half complex plane,
	\begin{equation}
	Q_{\mathcal K}=\sum\limits_{j\in\mathcal K}|Q_{\ell' j}|.
	\end{equation}
\end{theorem}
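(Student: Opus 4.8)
The plan is to mirror the proof of Theorem \ref{reactmeconvergence1}, now with the roles of the ``large'' and ``small'' displacement vectors interchanged. The starting point is that the LE \eqref{lelayerimage}--\eqref{lecoeffimage} of each single reaction potential $\tilde u_{\ell\ell'}^{\mathfrak{ab}}(\bs r,\bs r_{\ell'j}^{\mathfrak{ab}})$ was obtained by applying Proposition \ref{lemma3} \emph{term by term} to the Taylor expansion \eqref{reactletaylor}. Since \eqref{polynomialadd} expresses $(\ri\bs k_0\cdot\hat{\bs r})^{n}/n!$ purely through spherical harmonics of degree exactly $n$, truncating the LE at degree $p$ is \emph{exactly} truncating the Taylor-in-$n$ series at $n=p$. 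Concretely, I would first record, for $\mathfrak a=1$,
\begin{equation*}
\sum_{n=0}^{p}\sum_{m=-n}^{n}L_{nm}^{1\mathfrak b,\rm out}\,r_t^nY_n^m(\theta_t,\varphi_t)=\frac{1}{8\pi^2}\sum_{j\in\mathcal K}Q_{\ell'j}\sum_{n=0}^{p}\mathcal I_n\!\big(\bs r_c^t-\bs r_{\ell'j}^{1\mathfrak b},\,\bs r-\bs r_c^t;\,\sigma_{\ell\ell'}^{1\mathfrak b}\big),
\end{equation*}
and the analogous identity for $\mathfrak a=2$ with the two vectors replaced by their reflections $\bs\tau(\bs r_c^t-\bs r_{\ell'j}^{2\mathfrak b})$ and $\bs\tau(\bs r-\bs r_c^t)$, using $|\bs\tau(\bs v)|=|\bs v|$ and the linearity \eqref{reflection}.

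Next I would verify the hypotheses of Theorem \ref{Thm:generalintegral} for the single-expansion estimate \eqref{reactionintegralestimate1}, applied with $\bs r\mapsto\bs r_c^t-\bs r_{\ell'j}^{\mathfrak{ab}}$ (or its reflection) and $\bs r'\mapsto\bs r-\bs r_c^t$ (or its reflection). Proposition \ref{Prop:densityprop} supplies that $\sigma_{\ell\ell'}^{\mathfrak{ab}}$ is analytic and bounded by $\bs M_{\sigma_{\ell\ell'}^{\mathfrak{ab}}}$ on $\{\mathfrak{Re}\,k_{\rho}>0\}$. The required positivity of the relevant $z$-coordinates ($z_c^t-z_{1\mathfrak b}'>0$ and $z-z_{1\mathfrak b}'>0$ in the $\mathfrak a=1$ case, and $z_{2\mathfrak b}'-z_c^t>0$, $z_{2\mathfrak b}'-z>0$ after reflection in the $\mathfrak a=2$ case) is precisely contained in \eqref{coordrestrictions}. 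Finally, for every $j\in\mathcal K$ the defining property of $\mathcal K$ gives $|\bs r_c^t-\bs r_{\ell'j}^{\mathfrak{ab}}|>a_t\ge r_t=|\bs r-\bs r_c^t|$ for $\bs r\in B_t$, which is exactly the strict separation hypothesis $|\bs r|>|\bs r'|$. Then \eqref{reactionintegralestimate1} bounds the single-source LE truncation error by
\begin{equation*}
\frac{(4\pi)^{-1}\bs M_{\sigma_{\ell\ell'}^{\mathfrak{ab}}}}{|\bs r_c^t-\bs r_{\ell'j}^{\mathfrak{ab}}|-r_t}\left(\frac{r_t}{|\bs r_c^t-\bs r_{\ell'j}^{\mathfrak{ab}}|}\right)^{p+1}.
\end{equation*}

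To replace the source-dependent distance $|\bs r_c^t-\bs r_{\ell'j}^{\mathfrak{ab}}|$ by the uniform radius $a_t$, I would use that $x\mapsto (x-r_t)^{-1}(r_t/x)^{p+1}$ is decreasing on $x>r_t$, so each term is dominated by $(4\pi)^{-1}\bs M_{\sigma_{\ell\ell'}^{\mathfrak{ab}}}(a_t-r_t)^{-1}(r_t/a_t)^{p+1}$; summing over $j\in\mathcal K$ against the weights $|Q_{\ell'j}|$ and applying the triangle inequality yields \eqref{reactlereactapprox}. I do not expect a genuine obstacle here, as all the analytic content is already packaged in Theorem \ref{Thm:generalintegral}: the only points needing care are the bookkeeping of the reflections $\bs\tau$ in the $\mathfrak a=2$ case and the justification that the degree-$p$ truncation of the spherical-harmonic LE corresponds exactly, degree by degree, to the $n\le p$ truncation of the Taylor series, which follows from the degree-homogeneity in Proposition \ref{lemma3}.
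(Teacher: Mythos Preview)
Your proposal is correct and follows exactly the route the paper intends: the paper does not spell out a proof of Theorem \ref{reactleconvergence} but simply states ``Following a similar proof'' to that of Theorem \ref{reactmeconvergence1}, and your argument is precisely that mirror proof with the roles of the large and small vectors interchanged. The verification of the sign conditions via \eqref{coordrestrictions}, the application of \eqref{reactionintegralestimate1}, and the monotonicity step replacing $|\bs r_c^t-\bs r_{\ell'j}^{\mathfrak{ab}}|$ by $a_t$ are all the expected ingredients.
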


Now we consider the error estimate for the ME to LE translation. Suppose the target box $B_t$ is far away from the source box $B_s^{\mathfrak{ab}}$. Recall \eqref{lelayerimage}, the LE of the potential $\Phi_{\ell\ell',\rm in}^{\mathfrak{ab}}$ in $B_t$ is given by
\begin{equation}\label{reactmetole}
\begin{split}
\Phi_{\ell\ell',{\rm in}}^{\mathfrak{ab}}(\bs r)=\sum\limits_{n=0}^{\infty}\sum\limits_{m=-n}^n L_{nm}^{\mathfrak{ab},\rm in} r_t^nY_n^m(\theta_t,\varphi_t),\quad \forall \bs r\in B_t,
\end{split}
\end{equation}
while the LE coefficients $L_{nm}^{\rm \mathfrak{ab},in}$ can be calculated from ME coefficients via the ME to LE translation operator \eqref{metoleimage1} as follows
\begin{equation}\label{reactmetolenotruncate}
L_{nm}^{\rm \mathfrak{ab},in}=\sum\limits_{\nu=0}^{\infty}\sum\limits_{\mu=-\nu}^{\nu}T_{nm,\nu\mu}^{\mathfrak{ab}}M_{\nu\mu}^{\mathfrak{ab},\rm in}.
\end{equation}
As in the FMM for free space components, \eqref{reactmetole} is not the approximation used in the implementation. In fact, the formulas \eqref{reactmetolenotruncate} for LE coefficients $L_{nm}^{\mathfrak{ab},\rm in}$ are truncated which gives approximated LE coefficients
\begin{equation}\label{reactmetoletruncate}
L_{nm}^{\mathfrak{ab},p}=\sum\limits_{\nu=0}^{p}\sum\limits_{\mu=-\nu}^{\nu}T_{nm,\nu\mu}^{\mathfrak{ab}}M_{\nu\mu}^{\mathfrak{ab},\rm in}.
\end{equation}
Thus, approximate LEs
\begin{equation}
\Phi_{\ell\ell',{\rm in}}^{\mathfrak{ab}}(\bs r)\approx \Phi_{\ell\ell',{\rm in}}^{\mathfrak{ab},p}(\bs r):=\sum\limits_{n=0}^{p}\sum\limits_{m=-n}^n L_{nm}^{\mathfrak{ab},p} r_t^nY_n^m(\theta_t,\varphi_t)
\end{equation}
with approximate LE coefficients defined in \eqref{reactmetoletruncate} are obtained after M2L translation. Recalling representation \eqref{reactcompme2lerep} and expansion \eqref{reactme2letaylor}, the approximate LEs $\Phi_{\ell\ell',{\rm in}}^{\mathfrak{ab},p}(\bs r)$ have representations
\begin{equation}\label{metoleapprox}
\begin{split}
\Phi_{\ell\ell',{\rm in}}^{1\mathfrak{b},p}(\bs r)&=\frac{1}{8\pi^2}\sum\limits_{j\in\mathcal J}Q_{\ell' j}\sum\limits_{n=0}^{p}\sum\limits_{\nu=0}^{p}\mathcal I_{n\nu}(\bs r_c^t-\bs r_c^{1\mathfrak{b}}, \bs r-\bs r_c^t, -(\bs r_{\ell'j}^{1\mathfrak{b}}-\bs r_c^{1\mathfrak{b}});\sigma_{\ell\ell'}^{1\mathfrak{b}}),\\
\Phi_{\ell\ell',{\rm in}}^{2\mathfrak{b},p}(\bs r)&=\frac{1}{8\pi^2}\sum\limits_{j\in\mathcal J}Q_{\ell' j}\sum\limits_{n=0}^{p}\sum\limits_{\nu=0}^{p}\mathcal I_{n\nu}(\tau(\bs r_c^t-\bs r_c^{2\mathfrak b}),\bs\tau({\bs r-\bs r_c^t}), -\bs\tau({\bs r_{\ell'j}^{2\mathfrak{b}}-\bs r_c^{2\mathfrak{b}}});\sigma_{\ell\ell'}^{2\mathfrak{b}}).
\end{split}
\end{equation}
Obviously, they are rectangular truncation of the double Taylor series.
\begin{theorem}\label{reactmetoleconvergence}
	Suppose $a_s^{\mathfrak{ab}}$ and $a_t$ are the radii of the circumscribed spheres of two well separated boxes $B_s^{\mathfrak{ab}}$ and $B_t$, respectively. The well separateness of the boxes means that $|\bs r_c^t-\bs r_c^{\mathfrak{ab}}|>a_s^{\mathfrak{ab}}+ca_t$ with some $c>1$. Then, the ME to LE translation has error estimate
	\begin{equation}\label{me2leerrorest}
	\Big|\Phi_{\ell\ell',{\rm in}}^{\mathfrak{ab}}(\bs r)-\Phi_{\ell\ell',{\rm in}}^{\mathfrak{ab},p}(\bs r)\Big|\leq \frac{1}{2\pi}\frac{Q_{\mathcal J} \bs M_{\sigma_{\ell\ell'}^{\mathfrak{ab}}}}{(c-1)a_t}\Big(\frac{a_s+a_t}{a_s+ca_t}\Big)^{p+1},\forall \bs r\in B_t,
	\end{equation}
	where $\bs M_{\sigma_{\ell\ell'}^{\mathfrak{ab}}}$ is the bound of  $\sigma_{\ell\ell'}^{\mathfrak{ab}}(k_{\rho})$ in the right half complex plane, $Q_{\mathcal J}$ is defined in \eqref{totalcharge}.
\end{theorem}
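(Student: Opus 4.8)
The plan is to derive \eqref{me2leerrorest} as a direct consequence of Theorem \ref{Thm:generalintegral} applied to each equivalent polarization source separately. First I would put both sides of the estimate into Sommerfeld-integral form. By \eqref{generalcomponentsimag} and \eqref{reactionpotentials}, $\Phi_{\ell\ell',{\rm in}}^{\mathfrak{ab}}(\bs r)=\sum_{j\in\mathcal J}Q_{\ell'j}\tilde u_{\ell\ell'}^{\mathfrak{ab}}(\bs r,\bs r_{\ell'j}^{\mathfrak{ab}})$; by \eqref{reactcompme2lerep} each $\tilde u_{\ell\ell'}^{1\mathfrak b}(\bs r,\bs r_{\ell'j}^{1\mathfrak b})$ equals $\tfrac{1}{8\pi^2}$ times a single integral $\mathcal I(\,\cdot\,;\sigma_{\ell\ell'}^{1\mathfrak b})$ whose argument is the sum of the three shift vectors $\bs r_c^t-\bs r_c^{1\mathfrak b}$, $\bs r-\bs r_c^t$ and $-(\bs r_{\ell'j}^{1\mathfrak b}-\bs r_c^{1\mathfrak b})$ (and similarly for $2\mathfrak b$ with each vector replaced by its reflection $\bs\tau(\cdot)$). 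On the other side, \eqref{metoleapprox} shows that $\Phi_{\ell\ell',{\rm in}}^{\mathfrak{ab},p}(\bs r)$ is exactly $\tfrac{1}{8\pi^2}\sum_{j\in\mathcal J}Q_{\ell'j}\sum_{n=0}^p\sum_{\nu=0}^p\mathcal I_{n\nu}$ of those same three vectors. Hence $\Phi_{\ell\ell',{\rm in}}^{\mathfrak{ab}}(\bs r)-\Phi_{\ell\ell',{\rm in}}^{\mathfrak{ab},p}(\bs r)$ is $\tfrac{1}{8\pi^2}\sum_{j\in\mathcal J}Q_{\ell'j}$ times the $(p,p)$-truncation error on the left of \eqref{reactionintegralestimate2}.

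Second, I would verify the hypotheses of Theorem \ref{Thm:generalintegral} for each $j$. By Proposition \ref{Prop:densityprop} the density $\sigma_{\ell\ell'}^{\mathfrak{ab}}(k_\rho)$ is analytic and bounded by $\bs M_{\sigma_{\ell\ell'}^{\mathfrak{ab}}}$ in $\{\mathfrak{Re}\,k_\rho>0\}$. For $\mathfrak a=1$ the conditions $z>0$, $z+z'>0$, $z+z'+z''>0$ become $z_c^t-z_c^{1\mathfrak b}>0$, $z-z_c^{1\mathfrak b}>0$ and $z-z'_{1\mathfrak b}>0$, which are among \eqref{MELEcentercond}--\eqref{coordrestrictions}; the case $\mathfrak a=2$ is identical because $\bs\tau$ only flips the relevant $z$-components, and \eqref{MELEcentercond}--\eqref{coordrestrictions} already record the flipped inequalities. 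The norm condition $|\bs r|>|\bs r'|+|\bs r''|$ reads $|\bs r_c^t-\bs r_c^{\mathfrak{ab}}|>|\bs r-\bs r_c^t|+|\bs r_{\ell'j}^{\mathfrak{ab}}-\bs r_c^{\mathfrak{ab}}|$, and since $\bs r\in B_t$, $\bs r_{\ell'j}^{\mathfrak{ab}}\in B_s^{\mathfrak{ab}}$ the right-hand side is $\le a_t+a_s^{\mathfrak{ab}}<a_s^{\mathfrak{ab}}+ca_t\le|\bs r_c^t-\bs r_c^{\mathfrak{ab}}|$ (as $c>1$); reflections preserve norms by \eqref{reflection}, so the $2\mathfrak b$ case is the same.

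Third, applying \eqref{reactionintegralestimate2} term by term with $\sigma=\sigma_{\ell\ell'}^{\mathfrak{ab}}$ and writing $s_j=|\bs r-\bs r_c^t|+|\bs r_{\ell'j}^{\mathfrak{ab}}-\bs r_c^{\mathfrak{ab}}|$, $D=|\bs r_c^t-\bs r_c^{\mathfrak{ab}}|$, one obtains
\begin{equation*}
\Big|\Phi_{\ell\ell',{\rm in}}^{\mathfrak{ab}}(\bs r)-\Phi_{\ell\ell',{\rm in}}^{\mathfrak{ab},p}(\bs r)\Big|\le\frac{\bs M_{\sigma_{\ell\ell'}^{\mathfrak{ab}}}}{2\pi}\sum_{j\in\mathcal J}\frac{|Q_{\ell'j}|}{D-s_j}\Big(\frac{s_j}{D}\Big)^{p+1}.
\end{equation*}
Using that $(D,s)\mapsto s^{p+1}/\big(D^{p+1}(D-s)\big)$ is increasing in $s$ on $0\le s<D$ and decreasing in $D$ on $D>s$, together with $s_j\le a_s^{\mathfrak{ab}}+a_t$ and $D\ge a_s^{\mathfrak{ab}}+ca_t$, each summand is at most $\frac{1}{(c-1)a_t}\big((a_s^{\mathfrak{ab}}+a_t)/(a_s^{\mathfrak{ab}}+ca_t)\big)^{p+1}$; summing and using $\sum_{j\in\mathcal J}|Q_{\ell'j}|=Q_{\mathcal J}$ yields \eqref{me2leerrorest} (with $a_s$ denoting $a_s^{\mathfrak{ab}}$).

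The substantive content — interchanging the double summation with the improper integral and the geometric decay of the tail — is entirely absorbed into Theorem \ref{Thm:generalintegral}, so I do not anticipate a genuine difficulty. The two points that need care are: confirming that the truncated coefficients \eqref{reactmetoletruncate}, and hence $\Phi_{\ell\ell',{\rm in}}^{\mathfrak{ab},p}$, are exactly the rectangular $(p,p)$-truncation \eqref{metoleapprox} of the double Taylor series (so that no cross terms are dropped when moving between the spherical-harmonic and the integral representations, which one checks by retracing the use of Proposition \ref{lemma3} in \eqref{metoleimage1}); and the elementary monotonicity estimate that replaces the $j$-dependent constants by the single uniform constant appearing in \eqref{me2leerrorest}.
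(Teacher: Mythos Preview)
Your proposal is correct and follows essentially the same route as the paper's proof: both represent $\Phi_{\ell\ell',{\rm in}}^{\mathfrak{ab}}$ and $\Phi_{\ell\ell',{\rm in}}^{\mathfrak{ab},p}$ via \eqref{reactcompme2lerep} and \eqref{metoleapprox}, apply the truncation bound \eqref{reactionintegralestimate2} of Theorem \ref{Thm:generalintegral} to each source $j\in\mathcal J$ (checking the sign and norm hypotheses from \eqref{coordrestrictions}--\eqref{MELEcentercond} and well-separateness), and then replace the $j$-dependent constants by the uniform one using $|\bs r_{\ell'j}^{\mathfrak{ab}}-\bs r_c^{\mathfrak{ab}}|\le a_s^{\mathfrak{ab}}$, $|\bs r-\bs r_c^t|\le a_t$, $|\bs r_c^t-\bs r_c^{\mathfrak{ab}}|\ge a_s^{\mathfrak{ab}}+ca_t$. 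Your write-up is in fact a bit more explicit than the paper's in verifying the hypotheses of Theorem \ref{Thm:generalintegral} and in spelling out the monotonicity step.
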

\begin{proof}
	By expression \eqref{metoleapprox} and truncation error estimate \eqref{reactionintegralestimate2}, we obtain
	\begin{equation}
	\begin{split}
	&\big|\Phi_{\ell\ell',{\rm in}}^{1\mathfrak{b}}(\bs r)-\Phi_{\ell\ell',{\rm in}}^{1\mathfrak{b},p}(\bs r)\big|\\
	\leq&\frac{1}{8\pi^2} \sum\limits_{j\in\mathcal K}|Q_{\ell'j}|\Big|\mathcal I(\bs r-\bs r_{\ell'j}^{1\mathfrak{b}};\sigma_{\ell\ell'}^{1\mathfrak{b}})-\sum\limits_{n=0}^{p}\sum\limits_{\nu=0}^{p}\mathcal I_{n\nu}(\bs r_c^t-\bs r_c^{1\mathfrak{b}}, \bs r-\bs r_c^t, -(\bs r_{\ell'j}^{1\mathfrak{b}}-\bs r_c^{1\mathfrak{b}});\sigma_{\ell\ell'}^{1\mathfrak{b}})\Big|\\
	\leq& \sum\limits_{j\in\mathcal K}\frac{|Q_{\ell'j}|\bs M_{\sigma_{\ell\ell'}^{1\mathfrak{b}}}}{2\pi(|\bs r_c^t-\bs r_c^{1\mathfrak{b}}|-|\bs r-\bs r_c^t|-|\bs r_{\ell'j}^{1\mathfrak{b}}-\bs r_c^{1\mathfrak{b}}|)}\left(\frac{|\bs r-\bs r_c^t|+|\bs r_{\ell'j}^{1\mathfrak{b}}-\bs r_c^{1\mathfrak{b}}|}{|\bs r_c^t-\bs r_c^{1\mathfrak{b}}|}\right)^{p+1}.
	\end{split}
	\end{equation}
	Similar error estimate can also be obtained for the reaction component $\Phi_{\ell\ell',{\rm in}}^{2\mathfrak{b}}(\bs r)$ by following the same derivations.
	Consequently, the error estimate \eqref{me2leerrorest} follows by further applying the assumptions $|\bs r_{\ell'j}^{\mathfrak{ab}}-\bs r_c^{\mathfrak{ab}}|<a_s^{\mathfrak{ab}}$ and $|\bs r-\bs r_c^t|<a_t$ and $|\bs r_c^t-\bs r_c^{\mathfrak{ab}}|>a_s^{\mathfrak{ab}}+ca_t$.
\end{proof}

\begin{rem}
	The error estimates in Theorems \ref{reactmeconvergence1}-\ref{reactmetoleconvergence} are almost the same as the ones in Theorems \ref{meconvergence}, \ref{leconvergence} and \ref{metoleconvergence} except the bound $\bs M_{\sigma_{\ell\ell'}^{\mathfrak{ab}}}$ of $\sigma_{\ell\ell'}^{\mathfrak{ab}}$.
\end{rem}
\subsection{Detailed proof for the Theorem \ref{Thm:generalintegral}} The proof consists of the following three steps:

{\bf Step 1: Rotation according to the azimuthal angle of $\bs r$.} By the assumptions $z>0, z+z'>0$ and $z+z'+z''>0$, all improper integrals used in the Theorem \ref{Thm:generalintegral} are convergent.
Denote by $(\rho, \phi)$ the polar coordinate of $(x, y)$ and define rotational transform $\xi=k_x\cos\phi+k_y\sin\phi$, $\eta=k_x\sin\phi-k_y\cos\phi$, i.e., $\xi+\ri\eta=e^{\ri\phi}(k_x-\ri k_y)$. It is obvious that $k_{\rho}^2=\xi^2+\eta^2$ and
\begin{equation}\label{gnkernel}
\frac{(\ri \bs k\cdot\tilde{\bs r})^q}{q!}=\frac{\ri^{q}\tilde r^q}{q!}(\xi^2+\eta^2)^{\frac{q}{2}}\Big[\frac{\xi\cos(\phi-\beta)+\eta\sin(\phi-\beta)}{\sqrt{\xi^2+\eta^2}}\sin\alpha+\ri\cos\alpha\Big]^q:=\hat g_{q}(\xi, \eta,\phi;\tilde{\bs r}),
\end{equation}
for any $\tilde{\bs r}=(\tilde r\sin\alpha\cos\beta,\tilde r\sin\alpha\sin\beta,\tilde r\cos\alpha)\in\mathbb R^3$. Therefore, \eqref{twointegralexp} can be re-expressed as
\begin{equation}\label{integralEk}
\begin{split}
\mathcal I(\bs r+ \bs r';\sigma)=&\int_{-\infty}^{\infty}\int_{-\infty}^{\infty}\sum\limits_{n=0}^{\infty}\hat g_{n}(\xi, \eta,\phi;{\bs r}')e^{\ri\xi\rho-\zeta z}\sigma(\zeta)d\xi d\eta\\
=&\int_{0}^{\infty}\int_{-\infty}^{\infty}\sum\limits_{n=0}^{\infty}\big[\hat g_{n}(\xi, \eta,\phi;{\bs r}')+\hat g_{n}(\xi, -\eta,\phi;{\bs r}')\big]e^{\ri\xi\rho-\zeta z}\sigma(\zeta)d\xi d\eta,
\end{split}
\end{equation}
and
\begin{equation}\label{integralFk}
\begin{split}
\mathcal I(\bs r+\bs r'&+\bs r'';\sigma)=\int_{-\infty}^{\infty}\int_{-\infty}^{\infty}\sum\limits_{n=0}^{\infty}\sum\limits_{\nu=0}^{\infty}g_{n\nu}(\xi, \eta,\phi;{\bs r}',{\bs r}'')e^{\ri\xi\rho-\zeta z}\sigma(\zeta)d\xi d\eta\\
=&\int_{0}^{\infty}\int_{-\infty}^{\infty}\sum\limits_{n=0}^{\infty}\sum\limits_{\nu=0}^{\infty}\big[g_{n\nu}(\xi, \eta,\phi;{\bs r}',{\bs r}'')+g_{n\nu}(\xi, -\eta,\phi;{\bs r}',{\bs r}'')\big]e^{\ri\xi\rho-\zeta z}\sigma(\zeta)d\xi d\eta,
\end{split}
\end{equation}
where
\begin{equation}\label{gnnukernel}
\zeta=\sqrt{\xi^2+\eta^2},\quad g_{n\nu}(\xi, \eta,\phi;{\bs r}',{\bs r}''):=\frac{1}{\zeta}\hat g_n(\xi, \eta,\phi;{\bs r}')\hat g_{\nu}(\xi, \eta,\phi;{\bs r}'').
\end{equation}
Define
\begin{equation}\label{Endeflimit}
\begin{split}
\mathcal E_{n}(\bs r, \bs r',\sigma)&=\int_{0}^{\infty}\int_{-\infty}^{\infty}\hat g_{n}(\xi,\eta,\phi;{\bs r}')e^{\ri\xi\rho-\zeta z}\sigma(\zeta)d\xi d\eta,\\
\widetilde{\mathcal E}_{n}(\bs r, \bs r', \sigma)&=\int_{0}^{\infty}\int_{-\infty}^{\infty}\hat g_{n}(\xi,-\eta,\phi;{\bs r}')e^{\ri\xi\rho-\zeta z}\sigma(\zeta)d\xi d\eta,
\end{split}
\end{equation}
and
\begin{equation}\label{Fnnudeflimit}
\begin{split}
\mathcal F_{n\nu}(\bs r, \bs r', \bs r'',\sigma)&=\int_{0}^{\infty}\int_{-\infty}^{\infty}g_{n\nu}(\xi,\eta,\phi;{\bs r}',{\bs r}'')e^{\ri\xi\rho-\zeta z}\sigma(\zeta)d\xi d\eta,\\
\widetilde{\mathcal F}_{n\nu}(\bs r, \bs r', \bs r'',\sigma)&=\int_{0}^{\infty}\int_{-\infty}^{\infty}g_{n\nu}(\xi,-\eta,\phi;{\bs r}',{\bs r}'')e^{\ri\xi\rho-\zeta z}\sigma(\zeta)d\xi d\eta.
\end{split}
\end{equation}
Then, the integrals in \eqref{InInuintegrals} have representations
\begin{equation}\label{InnuEnFnu}
\begin{split}
\mathcal I_n(\bs r+ \bs r';\sigma)=&\mathcal E_{n}(\bs r, \bs r',\sigma)+\widetilde{\mathcal E}_{n}(\bs r, \bs r', \sigma),\\
\mathcal I_{n\nu}(\bs r+\bs r'+\bs r'';\sigma)=&\mathcal F_{n\nu}(\bs r, \bs r', \bs r'',\sigma)+\widetilde{\mathcal F}_{n\nu}(\bs r, \bs r', \bs r'',\sigma).
\end{split}
\end{equation}

{\bf Step 2: Contour deformation.} In the following analysis, we will deform the contour of the inner integral in \eqref{integralEk}-\eqref{integralFk}. As the integrands involve square root function $\zeta(\xi)=\sqrt{\xi^2+\eta^2}$, we choose branch as follow
\begin{equation}\label{squarerootbranch}
\sqrt{z}=\sqrt{\frac{|z|+z_1}{2}}+\ri\;{\rm sign}(z_2)\sqrt{\frac{|z|-z_1}{2}},\quad\forall z=z_1+\ri z_2\in\mathbb C.
\end{equation}
With this branch, $\zeta(\xi)=\sqrt{\xi^2+\eta^2}$ for any fixed $\eta\geq 0$ has branch cut along $\{\ri\xi: \xi>\eta\}$ and  $\{\ri\xi: \xi<\eta\}$ (the red lines in Fig. \ref{contour}) in the complex $\xi$-plane and is analytic with respect to $\xi$ in the complex domain $\mathbb C\setminus(\{\ri\xi: \xi\geq\eta\}\cup \{\ri\xi: \xi\leq\eta\})$.
The contour deformation will be based on the following lemma:
\begin{lemma}\label{contourchangelemma}
	Denote by $\Omega_{\Gamma}^+\subset\mathbb C$ the complex domain between real axis and the contour $\Gamma$ defined by the parametric $\xi_{\pm}(t)$ in \eqref{contourdef}. Let $f(\xi)$ be an analytic function in $\Omega_{\Gamma}^+$ and satisfy $|f(\xi)|\leq C|\xi|^m$ for some integer $m$ and some constant $C>0$. Then for any $\rho\geq 0, z>0$ and $\eta>0$, there holds
	\begin{equation}\label{integraltransformed}
	\int_{-\infty}^{\infty}f(\xi)e^{\ri\xi\rho-\sqrt{\eta^2+\xi^2}z}d\xi=\ri\int_{1}^{\infty}[f(\xi_{+}(t))\Lambda_+(t)+f(\xi_{-}(t)\Lambda_{-}(t)]\frac{e^{-\eta rt}}{\sqrt{t^2-1}}dt,
	\end{equation}
	where $r=\sqrt{\rho^2+z^2}$, and $\xi_{\pm}(t), \Lambda_{\pm}(t)$ are defined by the Cagniard-de Hoop transform
	\begin{equation}\label{contourdef}
	\xi_{\pm}(t)=\frac{\eta}{r}\big(\ri\rho t\pm z\sqrt{t^2-1}\big),\quad \Lambda_{\pm}(t)=\frac{\eta}{r}(\rho\sqrt{t^2-1}\mp\ri zt).
	\end{equation}
\end{lemma}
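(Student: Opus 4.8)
The plan is to use the classical Cagniard--de Hoop technique: deform the contour of the inner $\xi$-integral from $\mathbb R$ onto a path on which the oscillatory-and-decaying kernel $e^{\ri\xi\rho-\sqrt{\eta^2+\xi^2}z}$ collapses to a pure real exponential $e^{-\eta r t}$, parametrized by $t\in[1,\infty)$. To find that path, impose $\ri\xi\rho-\sqrt{\eta^2+\xi^2}z=-\eta r t$ and square; using $r^2=\rho^2+z^2$ this reduces to the quadratic $\xi^2-\tfrac{2\ri t\rho\eta}{r}\,\xi+\tfrac{\eta^2}{r^2}(z^2-r^2t^2)=0$, whose two roots are exactly the $\xi_{\pm}(t)$ of \eqref{contourdef}. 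One then checks the algebraic identity $\eta^2+\xi_{\pm}(t)^2=\tfrac{\eta^2}{r^2}\bigl(zt\pm\ri\rho\sqrt{t^2-1}\bigr)^2$; since $zt\pm\ri\rho\sqrt{t^2-1}$ has positive real part $zt$, the branch \eqref{squarerootbranch} selects $\sqrt{\eta^2+\xi_{\pm}(t)^2}=\tfrac{\eta}{r}\bigl(zt\pm\ri\rho\sqrt{t^2-1}\bigr)$, and substituting back confirms $\ri\xi_{\pm}(t)\rho-\sqrt{\eta^2+\xi_{\pm}(t)^2}\,z=-\eta r t$ exactly, the two $\pm\ri\rho z\sqrt{t^2-1}$ terms cancelling. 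Differentiating gives $\tfrac{d\xi_{\pm}}{dt}=\ri\Lambda_{\pm}(t)/\sqrt{t^2-1}$ with $\Lambda_\pm$ as in \eqref{contourdef}, which is the source of both the weight $\Lambda_\pm$ and the integrable factor $(t^2-1)^{-1/2}$ in \eqref{integraltransformed}. The two arms $t\mapsto\xi_{\pm}(t)$ leave the common point $\xi_{\pm}(1)=\ri\eta\rho/r$, which lies strictly \emph{below} the branch point $\ri\eta$ since $\rho<r$, and recede to infinity into the first and second quadrants; together they form $\Gamma$, and $\Omega_\Gamma^+$ is the region they cut off against the real axis.

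Next I would perform the deformation. For fixed large $R$, take the closed, positively oriented contour made of the real segment $[-R,R]$, two connecting arcs rising from $\pm R$ to points on the arms of $\Gamma$, and the piece of $\Gamma$ between them. The integrand $f(\xi)e^{\ri\xi\rho-\sqrt{\eta^2+\xi^2}z}$ is analytic throughout $\Omega_\Gamma^+$ --- $f$ by hypothesis, $e^{\ri\xi\rho}$ trivially, and $\sqrt{\eta^2+\xi^2}$ because the branch cut $\{\ri s:|s|\ge\eta\}$ does not meet $\overline{\Omega_\Gamma^+}$ (the curve $\Gamma$ touches the imaginary axis only at $\ri\eta\rho/r$, with $\rho/r<1$) --- so Cauchy's theorem kills the closed integral. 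Letting $R\to\infty$: on $\Gamma$ the kernel is $e^{-\eta r t}$ while $|\Lambda_\pm(t)|=O(t)$, $|f(\xi_\pm(t))|\le C|\xi_\pm(t)|^m=O(t^m)$ and $(t^2-1)^{-1/2}$ is integrable near $t=1$ and $O(t^{-1})$ at infinity, so the truncated $\Gamma$-integral converges absolutely to the full one; and the connecting-arc integrals vanish because there $\mathfrak{Re}\bigl(\ri\xi\rho-\sqrt{\eta^2+\xi^2}z\bigr)\to-\infty$ linearly in $|\xi|$ --- using $\sqrt{\eta^2+\xi^2}\sim\pm\xi$ as $|\xi|\to\infty$ in the right/left half-planes one gets a bound $\le-\min(\rho,z)|\xi|+o(|\xi|)$ valid uniformly in the relevant wedge of the upper half-plane --- which overpowers the polynomial growth of $f$ and the $O(|\xi|)$ arc length. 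Assembling, $\int_{-\infty}^{\infty}f(\xi)e^{\ri\xi\rho-\sqrt{\eta^2+\xi^2}z}\,d\xi$ equals the integral of the same integrand over $\Gamma$ with the orientation inherited from $\mathbb R$ (i.e.\ $\xi_{-}$ traversed with $t$ decreasing from $\infty$ to $1$, then $\xi_{+}$ with $t$ increasing); substituting $\xi=\xi_\pm(t)$, $d\xi=\ri\Lambda_\pm(t)/\sqrt{t^2-1}\,dt$ and $e^{\ri\xi\rho-\sqrt{\eta^2+\xi^2}z}=e^{-\eta r t}$ then yields \eqref{integraltransformed}.

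The main obstacle is the uniform decay estimate on the connecting arcs. Since the arms of $\Gamma$ grow only linearly, $|\xi|$ there is comparable to $R$, so a naive $\sqrt{\eta^2+\xi^2}\approx\xi$ argument is not enough; one must control $\mathfrak{Re}\sqrt{\eta^2+\xi^2}$ through the explicit branch \eqref{squarerootbranch} simultaneously with the $e^{\ri\xi\rho}$ factor and confirm that their combined real part still diverges to $-\infty$ uniformly over the wedge between the real axis and $\Gamma$. A secondary bookkeeping point is the degenerate case $\rho=0$: then $\Gamma$ collapses onto the real axis, the connecting arcs are unnecessary, and \eqref{integraltransformed} reduces to the elementary substitution $\sqrt{\eta^2+\xi^2}=\eta t$, which also pins down the orientation and signs appearing in the general formula.
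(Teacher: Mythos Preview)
Your approach is essentially the same as the paper's: both use the Cagniard--de~Hoop contour $\Gamma$, verify analyticity in $\Omega_\Gamma^+$ relative to the branch cuts $\{\ri s:|s|\ge\eta\}$, close the contour with arcs at radius $R$, show the arc contributions vanish, and then parametrize $\Gamma$ by $t$ via $d\xi_\pm/dt=\ri\Lambda_\pm(t)/\sqrt{t^2-1}$. Your derivation of $\xi_\pm$, the branch check, and the orientation bookkeeping are all correct.

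The one place your sketch is looser than the paper is exactly the point you flag as the ``main obstacle'': the arc decay. Your heuristic $\mathfrak{Re}\bigl(\ri\xi\rho-\sqrt{\eta^2+\xi^2}\,z\bigr)\lesssim-\min(\rho,z)|\xi|$ works for $\rho>0$ but forces a separate treatment of $\rho=0$, and the asymptotic $\sqrt{\eta^2+\xi^2}\sim\pm\xi$ needs uniformity across the wedge. The paper handles this more cleanly and without any case split: writing $\xi=Re^{\ri\theta}$ on the circular arc and using the explicit branch \eqref{squarerootbranch}, one gets the elementary lower bound
\[
\mathfrak{Re}\sqrt{\eta^2+R^2e^{2\ri\theta}}\;\ge\;R|\cos\theta|,
\]
valid for all $R$, $\eta$, $\theta$. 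Since the arc angle $\theta$ stays in $[0,\theta_R^+]$ (resp.\ $[\theta_R^-,\pi]$) with $\cos\theta_R^+\ge\frac{\sqrt3}{2}\frac{z}{r}$ once $t_R\ge2$, this yields $\mathfrak{Re}\sqrt{\eta^2+\xi^2}\ge\frac{\sqrt3}{2}\frac{Rz}{r}$ uniformly on the arc, hence arc decay $O\bigl(R^{m+1}e^{-\frac{\sqrt3 z^2}{2r}R}\bigr)$ depending only on $z>0$. This is the concrete realization of the estimate you said you would need; everything else in your outline matches the paper.
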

\begin{proof}
	Define a hyperbolic integral path $\Gamma=\Gamma_+\cup\Gamma_-$, where
	$$\Gamma_{\pm}=\{\xi_{\pm}(t): t\geq 1\}.$$
	For any $R>0$, let $O^+_R$ and $O^-_R$ be the parts of the circle $\{\xi:|\xi|=R\}$ that are bounded by the real axis and $\Gamma_{\pm}$, respectively (see Fig. \ref{contour}).
	\begin{figure}[ht!]\label{contour}
		\centering
		\includegraphics[scale=0.8]{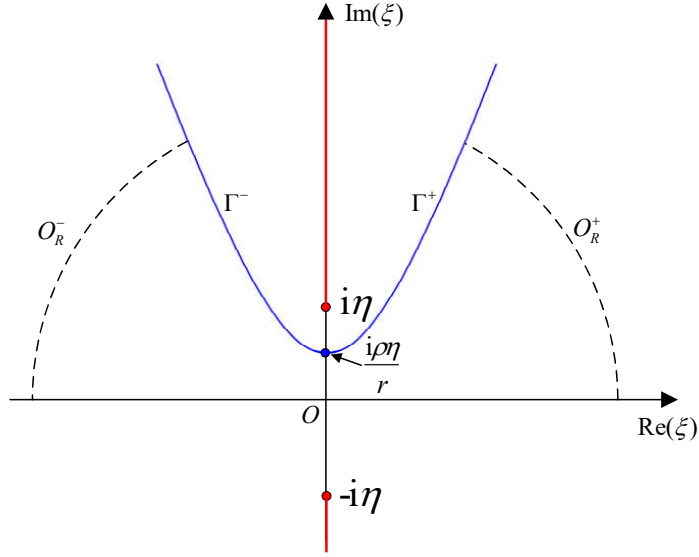}
		\caption{The Cagniard-de Hoop transform from the real axis to $\Gamma_+\cup\Gamma_-$.}
	\end{figure}
	Denote by $\xi(t_{R}^{\pm})=Re^{\ri\theta_R^{\pm}}$ the intersections of $O_R^{\pm}$ and $\Gamma^{\pm}$. Then, $0<\theta_R^+<\frac{\pi}{2}$, $\frac{\pi}{2}<\theta_R^-<\pi$ and
	\begin{equation}\label{circularintegral}
	\begin{split}
	\int_{O_R^+}f(\xi)e^{\ri\xi\rho-\sqrt{\eta^2+\xi^2}z}d\xi=\ri\int_0^{\theta_R^+}f(Re^{\ri\theta})e^{-R\rho\sin\theta-\mathfrak{Re}\zeta(\theta)z}e^{\ri (R\rho\cos\theta-\mathfrak{Im}\zeta(\theta)z)}Re^{\ri\theta}d\theta,\\
	\int_{O_R^-}f(\xi)e^{\ri\xi\rho-\sqrt{\eta^2+\xi^2}z}d\xi=\ri\int_{\theta_R^-}^{\pi}f(Re^{\ri\theta})e^{-R\rho\sin\theta-\mathfrak{Re}\zeta(\theta)z}e^{\ri (R\rho\cos\theta-\mathfrak{Im}\zeta(\theta)z)}Re^{\ri\theta}d\theta,
	\end{split}
	\end{equation}
	where $\zeta(\theta)=\sqrt{\eta^2+R^2e^{2\ri\theta}}$. Choosing the branch according to \eqref{squarerootbranch} gives a lower bound
	\begin{equation}
	\mathfrak{Re}\;\zeta(\theta)=\sqrt{\frac{\sqrt{(\eta^2+R^2\cos2\theta)^2+R^4\sin^22\theta}+\eta^2+R^2\cos 2\theta}{2}}\geq R|\cos\theta|.
	\end{equation}
	Noting that $0<\theta_R^+<\frac{\pi}{2}$, $\frac{\pi}{2}<\theta_R^-<\pi$ and $z>0$, we have
	\begin{equation}
	\begin{split}
	\cos\theta_R^+=&\frac{z\sqrt{t^2_R-1}}{\sqrt{\rho^2t^2_R+z^2(t^2_R-1)}}>\frac{z}{r}\sqrt{1-\frac{1}{t^2_R}} \geq \frac{\sqrt{3}}{2}\frac{z}{r},\\
	\cos\theta_R^-=&\frac{-z\sqrt{t^2_R-1}}{\sqrt{\rho^2t^2_R+z^2(t^2_R-1)}}< -\frac{z}{r}\sqrt{1-\frac{1}{t^2_R}} \leq -\frac{\sqrt{3}}{2}\frac{z}{r},
	\end{split}
	\end{equation}
	for all $R$ such that $t_R\geq 2$. Thus
	\begin{equation}
	\begin{split}
	&\mathfrak{Re}\;\zeta(\theta)\geq R\cos\theta_R^+> \frac{\sqrt{3}}{2}\frac{Rz}{r},\quad 0\leq\theta\leq\theta_R^+,\\
	&\mathfrak{Re}\;\zeta(\theta)\geq -R\cos\theta_R^-> \frac{\sqrt{3}}{2}\frac{Rz}{r},\quad\theta_R^-\leq\theta\leq\pi,
	\end{split}
	\end{equation}
	if $t_R\geq 2$. Applying the above estimates in \eqref{circularintegral} and then using the assumption $z>0$, we obtain
	\begin{equation}
	\Big|\int_{O_R^{\pm}}f(\xi)e^{\ri\xi\rho-\sqrt{\eta^2+\xi^2}z}d\xi\Big|\leq CR^{m+1}e^{-\frac{\sqrt{3}z^2}{2r}R}\rightarrow 0,\quad R\rightarrow \infty.
	\end{equation}
	By choosing the branch \eqref{squarerootbranch}, the square root function $\sqrt{\eta^2+\xi^2}$ have branch cut along $\{\ri\xi| \xi>\eta\}$ and $\{\ri\xi| \xi<-\eta\}$ (see Fig. \ref{contour}). Therefore, $f(\xi)e^{\ri\xi\rho-\sqrt{\eta^2+\xi^2}z}$ is analytic in the domain $\Omega_{\Gamma}^+$ for any fixed $\eta>0$.   By Cauchy's theorem, \eqref{integraltransformed} follows from the facts
	\begin{equation}
	\int_{-\infty}^{\infty}f(\xi)e^{\ri\xi\rho-\sqrt{\eta^2+\xi^2}z}d\xi=\int_{\Gamma}f(\xi)e^{\ri\xi\rho-\sqrt{\eta^2+\xi^2}z}d\xi,\quad \forall\eta>0,
	\end{equation}
	and
	\begin{equation}
	\frac{\rm d\xi_{\pm}(t)}{\rm dt}=\frac{\eta}{r\sqrt{t^2-1}}(\ri\rho\sqrt{t^2-1}\pm zt)=\frac{\ri \Lambda_{\pm}(t)}{\sqrt{t^2-1}}.
	\end{equation}
\end{proof}

In order to deform the contour of the inner integrals from the real axis to the contour $\Gamma$ defined in lemma \ref{contourchangelemma}, $\eta$ is not allowed to touch $0$. Therefore, we define sequences
\begin{equation}\label{halfintegralEk}
\begin{split}
\mathcal E^k(\bs r, \bs r';\sigma)=&\int_{\frac{1}{k}}^{\infty}\int_{-\infty}^{\infty}\sum\limits_{n=0}^{\infty}\hat g_{n}(\xi, \eta,\phi;{\bs r}')e^{\ri\xi\rho-\zeta z}\sigma(\zeta)d\xi d\eta,\\
\widetilde{\mathcal E}^k(\bs r, \bs r';\sigma)=&\int_{\frac{1}{k}}^{\infty}\int_{-\infty}^{\infty}\sum\limits_{n=0}^{\infty}\hat g_{n}(\xi, -\eta,\phi;{\bs r}')e^{\ri\xi\rho-\zeta z}\sigma(\zeta)d\xi d\eta,
\end{split}
\end{equation}
and
\begin{equation}\label{halfintegralFk}
\begin{split}
{\mathcal F}^k(\bs r, \bs r', \bs r'';\sigma)=&\int_{\frac{1}{k}}^{\infty}\int_{-\infty}^{\infty}\sum\limits_{n=0}^{\infty}\sum\limits_{\nu=0}^{\infty}g_{n\nu}(\xi, \eta,\phi;{\bs r}',{\bs r}'')e^{\ri\xi\rho-\zeta z}\sigma(\zeta)d\xi d\eta,\\
\widetilde{\mathcal F}^k(\bs r, \bs r', \bs r'';\sigma)=&\int_{\frac{1}{k}}^{\infty}\int_{-\infty}^{\infty}\sum\limits_{n=0}^{\infty}\sum\limits_{\nu=0}^{\infty}g_{n\nu}(\xi, -\eta,\phi;{\bs r}',{\bs r}'')e^{\ri\xi\rho-\zeta z}\sigma(\zeta)d\xi d\eta,
\end{split}
\end{equation}
for $k=1, 2, \cdots $. Further, their limit values are denoted by
\begin{equation}\label{EFdef}
\begin{split}
&\mathcal E(\bs r, \bs r',\sigma):=\lim\limits_{k\rightarrow\infty}\mathcal E^k(\bs r, \bs r';\sigma),\quad\mathcal F(\bs r, \bs r', \bs r'';\sigma):=\lim\limits_{k\rightarrow\infty}\mathcal F^k(\bs r, \bs r', \bs r'';\sigma),\\
& \widetilde{\mathcal E}(\bs r, \bs r';\sigma):=\lim\limits_{k\rightarrow\infty}\widetilde{\mathcal E}^k(\bs r, \bs r';\sigma),\quad \widetilde{\mathcal F}(\bs r, \bs r', \bs r'';\sigma):=\lim\limits_{k\rightarrow\infty}\widetilde{\mathcal F}^k(\bs r, \bs r', \bs r'';\sigma).
\end{split}
\end{equation}
Then,
\begin{equation}\label{InInnuExplimit}
\begin{split}
&\mathcal I(\bs r+\bs r';\sigma)=\mathcal E(\bs r, \bs r',\sigma)+ \widetilde{\mathcal E}(\bs r, \bs r';\sigma),\\
&\mathcal I(\bs r+\bs r'+\bs r'';\sigma)=\mathcal F(\bs r, \bs r', \bs r'';\sigma)+\widetilde{\mathcal F}(\bs r, \bs r', \bs r'';\sigma).
\end{split}
\end{equation}
Accordingly, we also define
\begin{equation}\label{EnFnnudef}
\begin{split}
\mathcal E_{n}^k(\bs r, \bs r',\sigma)&=\int_{\frac{1}{k}}^{\infty}\int_{-\infty}^{\infty}\hat g_{n}(\xi,\eta,\phi;{\bs r}')e^{\ri\xi\rho-\zeta z}\sigma(\zeta)d\xi d\eta,\\
\widetilde{\mathcal E}_{n}^k(\bs r, \bs r', \sigma)&=\int_{\frac{1}{k}}^{\infty}\int_{-\infty}^{\infty}\hat g_{n}(\xi,-\eta,\phi;{\bs r}')e^{\ri\xi\rho-\zeta z}\sigma(\zeta)d\xi d\eta,\\
\mathcal F_{n\nu}^k(\bs r, \bs r', \bs r'',\sigma)&=\int_{\frac{1}{k}}^{\infty}\int_{-\infty}^{\infty}g_{n\nu}(\xi,\eta,\phi;{\bs r}',{\bs r}'')e^{\ri\xi\rho-\zeta z}\sigma(\zeta)d\xi d\eta,\\
\widetilde{\mathcal F}_{n\nu}^k(\bs r, \bs r', \bs r'',\sigma)&=\int_{\frac{1}{k}}^{\infty}\int_{-\infty}^{\infty}g_{n\nu}(\xi,-\eta,\phi;{\bs r}',{\bs r}'')e^{\ri\xi\rho-\zeta z}\sigma(\zeta)d\xi d\eta,
\end{split}
\end{equation}
while the integrals in \eqref{Endeflimit}-\eqref{Fnnudeflimit} are their limit values, i.e.,
\begin{equation}\label{EnFnnudeflimit}
\begin{split}
\mathcal E_{n}(\bs r, \bs r',\sigma)=\lim\limits_{k\rightarrow\infty}\mathcal E_{n}^k(\bs r, \bs r',\sigma),\quad \mathcal F_{n\nu}(\bs r, \bs r', \bs r'',\sigma)=\lim\limits_{k\rightarrow\infty}\mathcal F_{n\nu}^k(\bs r, \bs r', \bs r'',\sigma),\\
\widetilde{\mathcal E}_{n}(\bs r, \bs r', \sigma)=\lim\limits_{k\rightarrow\infty}\widetilde{\mathcal E}_{n}^k(\bs r, \bs r', \sigma),\quad \widetilde{\mathcal F}_{n\nu}(\bs r, \bs r', \bs r'',\sigma)=\lim\limits_{k\rightarrow\infty}\widetilde{\mathcal F}_{n\nu}^k(\bs r, \bs r', \bs r'',\sigma).
\end{split}
\end{equation}

\begin{lemma}
	Suppose $z>0$, and $\sigma(k_{\rho})$ is analytic and bounded in the right half complex plane, then
	\begin{equation}\label{EnFnnudefcontourdeformed1}
	\begin{split}
	\mathcal E_{n}^k(\bs r, \bs r',\sigma)&=\ri\int_{\frac{1}{k}}^{\infty}\int_{1}^{\infty}\hat{h}_n(t,\eta,\phi;\bs r')\sigma(\zeta(t))\frac{e^{-\eta rt}}{\sqrt{t^2-1}}dt d\eta\\
	\widetilde{\mathcal E}_{n}^k(\bs r, \bs r', \sigma)&=\ri\int_{\frac{1}{k}}^{\infty}\int_{1}^{\infty}\hat{h}_n(t,-\eta,\phi;\bs r')\sigma(\zeta(t))\frac{e^{-\eta rt}}{\sqrt{t^2-1}}dt d\eta,
	\end{split}
	\end{equation}
	\begin{equation}\label{EnFnnudefcontourdeformed2}
	\begin{split}
	\mathcal F_{n\nu}^k(\bs r, \bs r', \bs r'',\sigma)&=\ri\int_{\frac{1}{k}}^{\infty}\int_{1}^{\infty}\hat{h}_{n\nu}(t,\eta,\phi;\bs r',\bs r'')\sigma(\zeta(t))\frac{e^{-\eta rt}}{\sqrt{t^2-1}}dt d\eta,\\
	\widetilde{\mathcal F}_{n\nu}^k(\bs r, \bs r', \bs r'',\sigma)&=\ri\int_{\frac{1}{k}}^{\infty}\int_{1}^{\infty}\hat{h}_{n\nu}(t,-\eta,\phi;\bs r',\bs r'')\sigma(\zeta(t))\frac{e^{-\eta rt}}{\sqrt{t^2-1}}dt d\eta,
	\end{split}
	\end{equation}
	where
	\begin{equation}\label{newdeformedkernels}
	\begin{split}
	&\hat{h}_n(t,\eta,\phi;\bs r')=\hat g_{n}(\xi_{+}(t),\eta,\phi;{\bs r}')\Lambda_{+}(t)+\hat g_{n}(\xi_{-}(t),\eta,\phi;{\bs r}')\Lambda_{-}(t),\\
	&{h}_{n\nu}(t,\eta,\phi;\bs r',\bs r'')=\hat g_{n\nu}(\xi_{+}(t),\eta,\phi;{\bs r}')\Lambda_{+}(t)+\hat g_{n\nu}(\xi_{-}(t),\eta,\phi;{\bs r}')\Lambda_{-}(t).
	\end{split}
	\end{equation}
\end{lemma}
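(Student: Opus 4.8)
The plan is to prove \eqref{EnFnnudefcontourdeformed1}--\eqref{EnFnnudefcontourdeformed2} one $\eta$-slice at a time, deforming the inner $\xi$-contour by Lemma~\ref{contourchangelemma}. Fix $n$ (and $\nu$), fix $\eta\in[\tfrac1k,\infty)$ and the angle $\phi$ (the polar angle of $(x,y)$, so that after the Step~1 rotation $\bs r$ plays the role of $(\rho,0,z)$ with $r=\sqrt{\rho^2+z^2}=|\bs r|$, $\rho\ge 0$, $z>0$, $\eta>0$ as required by the lemma). I would set $f(\xi):=\hat g_n(\xi,\eta,\phi;\bs r')\,\sigma(\zeta(\xi))$ in the $\mathcal E_n^k$ case, and $f(\xi):=g_{n\nu}(\xi,\eta,\phi;\bs r',\bs r'')\sigma(\zeta(\xi))=\zeta(\xi)^{-1}\hat g_n(\xi,\eta,\phi;\bs r')\hat g_\nu(\xi,\eta,\phi;\bs r'')\sigma(\zeta(\xi))$ in the $\mathcal F_{n\nu}^k$ case (using \eqref{gnnukernel}), so that the inner integrand of $\mathcal E_n^k$, resp.\ $\mathcal F_{n\nu}^k$, is exactly $f(\xi)e^{\ri\xi\rho-\sqrt{\eta^2+\xi^2}z}$ and Lemma~\ref{contourchangelemma} applies verbatim.

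The two hypotheses of Lemma~\ref{contourchangelemma} to verify are that $f$ is analytic on $\Omega_\Gamma^+$ and that $|f(\xi)|\le C|\xi|^m$ there. For analyticity I would first rewrite \eqref{gnkernel} as $\hat g_n(\xi,\eta,\phi;\bs r')=\tfrac{\ri^n(r')^n}{n!}\bigl[(\xi\cos(\phi-\beta)+\eta\sin(\phi-\beta))\sin\alpha+\ri\,\zeta(\xi)\cos\alpha\bigr]^n$, a polynomial in $\xi$ and $\zeta(\xi)=\sqrt{\xi^2+\eta^2}$; with the branch \eqref{squarerootbranch}, $\zeta(\xi)$ is analytic off the cuts $\{\ri s:|s|\ge\eta\}$, and since by \eqref{contourdef} the Cagniard--de Hoop hyperbola issues from $\xi_\pm(1)=\ri\eta\rho/r$ with $\rho/r<1$ (because $z>0$), the branch point $\ri\eta$ and its cut lie strictly outside the region $\Omega_\Gamma^+$ bounded by the real axis and $\Gamma$. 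Moreover the branch formula \eqref{squarerootbranch} forces $\mathfrak{Re}\,\zeta(\xi)>0$ throughout $\Omega_\Gamma^+$ (equality only on the cuts), so $\zeta(\xi)$ stays in the right half plane where $\sigma$ is, by hypothesis, analytic and bounded; hence $f$ is analytic on $\Omega_\Gamma^+$, with the extra factor $\zeta(\xi)^{-1}$ in the $\mathcal F_{n\nu}^k$ case harmless since $\zeta$ does not vanish there. Since $\hat g_n(\xi,\cdot)=O(|\xi|^n)$ and $|\sigma|\le\bs M_\sigma$, we also get the polynomial bound. Lemma~\ref{contourchangelemma} then yields the inner identity $\int_{-\infty}^{\infty}f(\xi)e^{\ri\xi\rho-\sqrt{\eta^2+\xi^2}z}d\xi=\ri\int_1^{\infty}[f(\xi_+(t))\Lambda_+(t)+f(\xi_-(t))\Lambda_-(t)]\tfrac{e^{-\eta rt}}{\sqrt{t^2-1}}dt$; grouping $f(\xi_\pm(t))\Lambda_\pm(t)=\hat g_n(\xi_\pm(t),\eta,\phi;\bs r')\,\sigma(\zeta(\xi_\pm(t)))\,\Lambda_\pm(t)$ into $\hat h_n$ as in \eqref{newdeformedkernels} (with the $\sigma$-factor carried along the deformed contour, which is the meaning of $\sigma(\zeta(t))$ in \eqref{EnFnnudefcontourdeformed1}) and integrating in $\eta$ over $[\tfrac1k,\infty)$ gives \eqref{EnFnnudefcontourdeformed1}. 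The $\widetilde{\mathcal E}_n^k$ and $\widetilde{\mathcal F}_{n\nu}^k$ identities follow by the same argument with $\eta$ replaced by $-\eta$ in the $\hat g$-arguments only (the exponential kernel keeps $\zeta=\sqrt{\xi^2+\eta^2}$ with $\eta>0$), and \eqref{EnFnnudefcontourdeformed2} from the second choice of $f$.

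I expect the main obstacle to be precisely this complex-analytic bookkeeping: confirming that the slit region $\Omega_\Gamma^+$ lies simultaneously in the domain of analyticity of $\zeta(\xi)$ (so that $\hat g_n$ and $\zeta^{-1}$ are analytic there) and in the half-plane $\{\mathfrak{Re}\,k_\rho>0\}$ where $\sigma$ is controlled — which is exactly what forces the hypothesis $z>0$, via $\rho/r<1$ keeping the hyperbola $\Gamma$ strictly below the branch point $\ri\eta$. A secondary, routine point is justifying the interchange of the outer $\eta$-integration with the contour deformation: this follows from Fubini once one observes that for $\eta\ge\tfrac1k$ the transformed $t$-integrand is dominated by $C(1+t)^{m}e^{-\eta rt}/\sqrt{t^2-1}$, integrable at $t=1$ and exponentially small at $t=\infty$ uniformly on compact $\eta$-sets, so every double integral involved converges absolutely.
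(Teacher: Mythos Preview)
Your proposal is correct and follows essentially the same approach as the paper: verify that the branch cuts of $\zeta(\xi)=\sqrt{\xi^2+\eta^2}$ miss $\Omega_\Gamma^+$, use $\mathfrak{Re}\,\zeta(\xi)>0$ there to get analyticity and boundedness of $\sigma(\zeta(\xi))$, check the polynomial growth of $\hat g_n$, and then invoke Lemma~\ref{contourchangelemma} $\eta$-slice by $\eta$-slice. Your rewriting of $\hat g_n$ as a polynomial in $\xi$ and $\zeta(\xi)$ (pulling the $(\xi^2+\eta^2)^{n/2}$ prefactor inside the bracket) is a slightly cleaner way to see analyticity than the paper's, which instead argues separately that $\zeta(\xi)\neq 0$ on $\Omega_\Gamma^+$ so the quotient form \eqref{gnkernel} is well-defined; and your explicit Fubini remark for the outer $\eta$-integration is a detail the paper leaves implicit.
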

\begin{proof}	
	According to the branch \eqref{squarerootbranch} we choose for the square root function, given any $\eta>0$, we have
	\begin{equation}\label{zetaprop}
	\mathfrak{Re}[\zeta(\xi)]=\mathfrak{Re}[\sqrt{\xi^2+\eta^2}]> 0, \quad\forall\xi\in\Omega_{\Gamma}^+.
	\end{equation}
	Together with the assumption $\sigma(k_{\rho})$ is analytic and bounded in the right half complex plane, we obtain $\sigma(\zeta(\xi))$ is analytic and bounded in $\Omega_{\Gamma}^+$.

	On the other hand, the branch \eqref{squarerootbranch} implies that $\hat g_q(\xi,\pm\eta,\phi;\tilde{\bs r})$ defined in \eqref{gnkernel} only have branch cut along $\{\ri\xi: \xi>\eta\}$ and $\{\ri\xi: \xi<-\eta\}$ (see. Fig. \ref{contour}) which has no intersection with $\Omega_{\Gamma}^+$ for any $\eta\geq\frac{1}{k}>0$. As \eqref{zetaprop} has already shown that $\sqrt{\xi^2+\eta^2}\neq 0$ for any given $\eta>0$ and $\xi\in\Omega_{\Gamma}^+$, we can conclude from the expression \eqref{gnkernel} that $\hat g_q(\xi,\pm\eta,\phi;\tilde{\bs r})$ is analytic and satisfies $|\hat g_q(\xi,\pm\eta,\phi;\tilde{\bs r})|\leq C|\xi|^q$ in the domain $\Omega_{\Gamma}^+$.  As a result, we can apply lemma \ref{contourchangelemma} to change the contour of the inner integrals in \eqref{halfintegralEk}-\eqref{halfintegralFk} from real axis to $\Gamma$. 	
\end{proof}
\begin{lemma}
	Suppose $z>0$, $z+z'>0$, and $\sigma(k_{\rho})$ is analytic and bounded in the right half complex plane, then
	\begin{equation}\label{integraldecomposition2-1}
	\begin{split}
	{\mathcal E}^{k}(\bs r,{\bs r}';\sigma)&=\ri\int_{\frac{1}{k}}^{\infty}\int_{1}^{\infty}\sum\limits_{n=0}^{\infty}\hat h_{n}(t,\eta,\phi;{\bs r}')\sigma(\zeta(t))\frac{e^{-\eta rt}}{\sqrt{t^2-1}}dt d\eta,\\
	\widetilde{\mathcal E}^{k}(\bs r,{\bs r}';\sigma)&=\ri\int_{\frac{1}{k}}^{\infty}\int_{1}^{\infty}\sum\limits_{n=0}^{\infty}\hat h_{n}(t,-\eta,\phi;{\bs r}')\sigma(\zeta(t))\frac{e^{-\eta rt}}{\sqrt{t^2-1}}dt d\eta.
	\end{split}
	\end{equation}
	Further, if have $z+z'+z''>0$, then
	\begin{equation}\label{integraldecomposition2-2}
	\begin{split}
	{\mathcal F}^{k}(\bs r,{\bs r}',{\bs r}'';\sigma)&=\ri\int_{\frac{1}{k}}^{\infty}\int_{1}^{\infty}\sum\limits_{n=0}^{\infty}\sum\limits_{\nu=0}^{\infty}h_{n\nu}(t,\eta,\phi;{\bs r}',{\bs r}'')\sigma(\zeta(t))\frac{e^{-\eta rt}}{\sqrt{t^2-1}}dt d\eta,\\
	\widetilde{\mathcal F}^{k}(\bs r,{\bs r}',{\bs r}'';\sigma)&=\ri\int_{\frac{1}{k}}^{\infty}\int_{1}^{\infty}\sum\limits_{n=0}^{\infty}\sum\limits_{\nu=0}^{\infty}h_{n\nu}(t,-\eta,\phi;{\bs r}',{\bs r}'')\sigma(\zeta(t))\frac{e^{-\eta rt}}{\sqrt{t^2-1}}dt d\eta,
	\end{split}
	\end{equation}
	where $\hat{h}_n(t,\eta,\phi;\bs r')$ and ${h}_{n\nu}(t,\eta,\phi;\bs r',\bs r'')$ are defined in \eqref{newdeformedkernels}.
\end{lemma}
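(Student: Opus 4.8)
The plan is to prove the identities directly, by deforming the contour of the inner $\xi$-integral in the \emph{unexpanded} integrands and only then re-expanding along the Cagniard–de Hoop contour, where the factor $e^{-\eta r t}$ supplies the decay that makes the interchange of series and integral legitimate. I will describe the $\mathcal{E}^k$ case; the rest run in parallel. Step one is to deform $\int_{-\infty}^{\infty}\zeta^{-1}e^{\ri\bs k\cdot(\bs r+\bs r')}\sigma(\zeta)\,d\xi$ (this is the inner integral of $\mathcal E^k$ once the pointwise identity $\sum_n\hat g_n(\xi,\eta,\phi;\bs r')=e^{\ri\bs k\cdot\bs r'}$ is used, the factor $\zeta^{-1}$ being harmless since $|\zeta|^{-1}\le k$ on $\eta\ge 1/k$) from the real axis to $\Gamma_+\cup\Gamma_-$. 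Mirroring the proof of Lemma \ref{contourchangelemma}: the integrand is analytic in $\Omega_\Gamma^+$ because $e^{\ri\bs k\cdot\bs r}$ carries the $z>0$ decay used there, $e^{\ri\bs k\cdot\bs r'}$ is entire in $\xi$, and $\sigma(\zeta)$ is analytic there by the branch \eqref{squarerootbranch} and the hypothesis on $\sigma$; the new ingredient is the vanishing of the circular arcs. On $\xi=Re^{\ri\theta}$, $0\le\theta\le\theta_R^+$, one has $\mathfrak{Re}\bigl(\ri\bs k\cdot(\bs r+\bs r')\bigr)=-R\sin\theta\,(\rho+p)-\mathfrak{Re}\,\zeta\cdot Z$ with $Z=z+z'>0$ and $p=x'\cos\phi+y'\sin\phi$ ($\phi$ the azimuthal angle of $\bs r$); using $\mathfrak{Re}\,\zeta\ge R\cos\theta_R^+$ and $\cos\theta_R^+\to z/r$, $\sin\theta_R^+\to\rho/r$, arc-vanishing reduces to the inequality $zZ>\rho\,|\rho+p|$, which follows from the ambient hypothesis $|\bs r|>|\bs r'|$ of Theorem \ref{Thm:generalintegral}: in the only nontrivial case $\rho+p<0$, put $s^2=x'^2+y'^2-\rho^2\in(0,z^2-z'^2)$ from $|\bs r'|^2<|\bs r|^2$; then $\rho|\rho+p|\le s^2/2$ (using $a-\sqrt{a^2-b}=b/(a+\sqrt{a^2-b})$) while $zZ>z\bigl(z-\sqrt{z^2-s^2}\,\bigr)\ge s^2/2$. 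Using $e^{\ri\bs k\cdot\bs r}\big|_{\xi=\xi_\pm(t)}=e^{-\eta r t}$, this expresses $\mathcal E^k$ as $\ri\int_{1/k}^{\infty}\!\int_1^{\infty}\bigl[\,\zeta^{-1}e^{\ri\bs k\cdot\bs r'}\sigma\,\Lambda_+\big|_{\xi_+(t)}+(\,\cdot\,)\big|_{\xi_-(t)}\bigr]e^{-\eta r t}(t^2-1)^{-1/2}\,dt\,d\eta$.

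Step two is to re-expand $e^{\ri\bs k\cdot\bs r'}\big|_{\xi=\xi_\pm(t)}=\sum_n\hat g_n(\xi_\pm(t),\eta,\phi;\bs r')$ and move $\sum_n$ outside the double integral by Tonelli/Fubini. The pivotal estimate is $|\hat g_n(\xi_\pm(t),\eta,\phi;\bs r')|\le(\eta t\,|\bs r'|)^n/n!$, which I obtain from the fact that $\bs k=(k_x,k_y,\ri\zeta)$ is a complex null vector, $\bs k\cdot\bs k=k_x^2+k_y^2-\zeta^2=0$, so $\mathfrak{Re}\,\bs k\perp\mathfrak{Im}\,\bs k$ and $|\mathfrak{Re}\,\bs k|=|\mathfrak{Im}\,\bs k|$, whence $|\bs k\cdot\bs v|\le|\mathfrak{Re}\,\bs k|\,|\bs v|$ for real $\bs v$; a short computation along $\xi_\pm(t)$, in which the azimuthal cross terms cancel exactly, gives $|\mathfrak{Re}\,\bs k|=\eta t$ on $\Gamma$. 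Together with $|\Lambda_\pm(t)|\le\eta t$, $|\sigma|\le\bs M_\sigma$ and $|\zeta|^{-1}\le k$, the modulus of the $n$-th term is bounded by $2k\bs M_\sigma\,\eta t\,(t^2-1)^{-1/2}(\eta t|\bs r'|)^n/n!\;e^{-\eta rt}$, whose sum over $n$ is the majorant $2k\bs M_\sigma\,\eta t\,(t^2-1)^{-1/2}e^{-\eta t(r-|\bs r'|)}$; this is integrable over $[1/k,\infty)\times[1,\infty)$ exactly because $r=|\bs r|>|\bs r'|$ (the $t\to 1$ singularity is integrable and $\eta\ge 1/k$ keeps the inner integral finite there). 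Fubini then legitimizes the interchange and yields \eqref{integraldecomposition2-1}; the $\widetilde{\mathcal E}^k$ identity is the same computation with $\eta\mapsto-\eta$, which affects none of the bounds.

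For \eqref{integraldecomposition2-2} I repeat both steps with $e^{\ri\bs k\cdot(\bs r+\bs r'+\bs r'')}$, $Z=z+z'+z''>0$, and the double Taylor series $\bigl(\sum_n\hat g_n(\cdot;\bs r')\bigr)\bigl(\sum_\nu\hat g_\nu(\cdot;\bs r'')\bigr)$: the arc estimate now uses $|\bs r|>|\bs r'|+|\bs r''|$, the null-vector bound gives $\sum_{n,\nu}|\hat g_n\hat g_\nu|\le e^{\eta t(|\bs r'|+|\bs r''|)}$, and the majorant becomes $2k\bs M_\sigma\,\eta t\,(t^2-1)^{-1/2}e^{-\eta t(r-|\bs r'|-|\bs r''|)}$, integrable under that hypothesis — here the $\zeta^{-1}$ factor is genuinely present in $g_{n\nu}$, which is precisely why the $\eta\ge 1/k$ truncation was introduced. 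I expect the main obstacle to be the construction of this uniform integrable majorant, and in particular securing the sharp constant $1$ in $|\bs k\cdot\bs v|\le\eta t|\bs v|$ on the contour (without it the hypothesis $|\bs r|>|\bs r'|$ would fail to close the estimate); the subsidiary technical point is the arc-vanishing inequality $zZ>\rho\,|\rho+p|$, which is what allows the resulting series to be identified with $\mathcal E^k,\mathcal F^k$ themselves rather than merely with the termwise sums.
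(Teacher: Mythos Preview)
Your approach is essentially the paper's: collapse the Taylor sum to the closed-form exponential $e^{\ri\bs k\cdot\bs r'}$, then deform the inner $\xi$-integral to the Cagniard--de Hoop contour $\Gamma$ via the arc-vanishing argument of Lemma~\ref{contourchangelemma}. The paper simply asserts that the extra factor $e^{\ri\xi(x'\cos\phi+y'\sin\phi)\pm\ri\eta(\cdots)}$ has modulus $\le 1$ on $\Omega_\Gamma^+$ and then invokes Lemma~\ref{contourchangelemma} with $z$ replaced by $z+z'$. As you implicitly noticed, that boundedness claim fails whenever $p=x'\cos\phi+y'\sin\phi<0$ (since $|e^{\ri\xi p}|=e^{-p\,\mathfrak{Im}\xi}$ grows on the upper arcs); your careful arc analysis leading to the sufficient condition $zZ>\rho|\rho+p|$, and your derivation of it from the ambient hypothesis $|\bs r|>|\bs r'|$ of Theorem~\ref{Thm:generalintegral}, is therefore a genuine strengthening --- you are patching a gap in the paper's argument rather than departing from it.

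Two small remarks. First, your Step~2 (Tonelli/Fubini to move $\sum_n$ outside the $dt\,d\eta$ integral) is not needed for this lemma: the statement \eqref{integraldecomposition2-1} keeps the sum \emph{inside}, so once the contour is deformed on the closed-form exponential, the pointwise identity $e^{\ri\bs k\cdot\bs r'}=\sum_n\hat g_n$ already delivers the right-hand side; the interchange is the content of the later Theorems~\ref{Th6}--\ref{Th7}. Second, your null-vector bound $|\bs k\cdot\bs v|\le|\mathfrak{Re}\,\bs k|\,|\bs v|=\eta t\,|\bs v|$ is a clean alternative to Lemma~\ref{lemmagqest} and gives the same constant; note, though, that along $\Gamma$ one has $|\zeta|=\tfrac{\eta}{r}\sqrt{z^2t^2+\rho^2(t^2-1)}\ge\eta z/r$ rather than $|\zeta|\ge\eta$, so the bound $|\zeta|^{-1}\le k$ should read $|\zeta|^{-1}\le kr/z$ --- harmless for the majorant.
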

\begin{proof}	
	As we have proved that $\sigma(\zeta(\xi))$ is analytic and bounded in $\Omega_{\Gamma}^+$, we will focus on the analysis for functions
	\begin{equation}
	\sum\limits_{n=0}^{\infty}\hat g_{n}(\xi, \pm\eta,\phi;{\bs r}'), \quad \sum\limits_{n=0}^{\infty}\sum\limits_{\nu=0}^{\infty}g_{n\nu}(\xi, \pm\eta,\phi;{\bs r}',\bs r'').
	\end{equation}
	Noting that they are resulted from a rotation of the Taylor expansions of exponential functions, we have
	\begin{equation}
	\sum\limits_{n=0}^{\infty}\hat g_{n}(\xi, \pm\eta,\phi;{\bs r}')=e^{\ri\xi(x'\cos\phi+y'\sin\phi)\pm\ri\eta(x'\sin\phi-y'\cos\phi)-\sqrt{\xi^2+\eta^2}z'}.
	\end{equation}
	Apparently,
	\begin{equation}
	|e^{\ri\xi(x'\cos\phi+y'\sin\phi)\pm\ri\eta(x'\sin\phi-y'\cos\phi)}|\leq 1,\quad\forall \xi\in\Omega_{\Gamma}^+,\;\;\eta\in\mathbb R,
	\end{equation}
	and
	\begin{equation}
	\sum\limits_{n=0}^{\infty}\hat g_{n}(\xi, \pm\eta,\phi;{\bs r}')e^{\ri\xi\rho-\zeta z}=e^{\ri\xi(x'\cos\phi+y'\sin\phi)\pm\ri\eta(x'\sin\phi-y'\cos\phi)}e^{\ri\xi\rho-\zeta (z+z')}
	\end{equation}
	Together with the assumptions $\rho\geq 0$, $z+z'>0$ and the fact $\sigma(\zeta(\xi))$ is analytic and bounded in the domain $\Omega_{\Gamma}^+$ for any $\eta>0$,  we can apply lemma \ref{contourchangelemma} to \eqref{halfintegralEk} to obtain \eqref{integraldecomposition2-1}.
	
	The proof for \eqref{integraldecomposition2-2} can be obtained similarly as $g_{n\nu}(\xi, \pm\eta,\phi;{\bs r}',\bs r'')$ are just the product of $\hat g_{n}(\xi, \pm\eta,\phi;{\bs r}')$ and $\hat g_{\nu}(\xi, \pm\eta,\phi;{\bs r}'')$ as defined in \eqref{gnnukernel}.
\end{proof}

{\bf Step 3: Convergence and error estimate.}
In order to exchange the order of the improper integrals and infinite summations in \eqref{integraldecomposition2-1}-\eqref{integraldecomposition2-2}, estimates for the following integrals
\begin{equation}\label{halfintegralE}
\begin{split}
&{\mathcal E}^{k,\pm}_{n}(r,\phi,{\bs r}';\sigma)=\int_{\frac{1}{k}}^{\infty}\int_{1}^{\infty}\Big|\hat g_{n}(\xi_{\pm}(t),\eta,\phi;{\bs r}')\frac{\Lambda_{\pm}(t)e^{-\eta rt}}{\sqrt{t^2-1}}\sigma(\zeta(t))\Big|dt d\eta,\\
&\widetilde{\mathcal E}^{k,\pm}_{n}(r,\phi,{\bs r}';\sigma)=\int_{\frac{1}{k}}^{\infty}\int_{1}^{\infty}\Big|\hat g_{n}(\xi_{\pm}(t),-\eta,\phi;{\bs r}')\frac{\Lambda_{\pm}(t)e^{-\eta rt}}{\sqrt{t^2-1}}\sigma(\zeta(t))\Big|dt d\eta, \\
&{\mathcal F}^{k,\pm}_{n\nu}(r,\phi,{\bs r}',{\bs r}'';\sigma)=\int_{\frac{1}{k}}^{\infty}\int_{1}^{\infty}\Big|g_{n\nu}(\xi_{\pm}(t),\eta,\phi;{\bs r}',{\bs r}'')\frac{\Lambda_{\pm}(t)e^{-\eta rt}}{\sqrt{t^2-1}}\sigma(\zeta(t))\Big|dt d\eta,\\
&\widetilde{\mathcal F}^{k,\pm}_{n\nu}(r,\phi,{\bs r}',{\bs r}'';\sigma)=\int_{\frac{1}{k}}^{\infty}\int_{1}^{\infty}\Big|g_{n\nu}(\xi_{\pm}(t),-\eta,\phi;{\bs r}',{\bs r}'')\frac{\Lambda_{\pm}(t)e^{-\eta rt}}{\sqrt{t^2-1}}\sigma(\zeta(t))\Big|dt d\eta,
\end{split}
\end{equation}
are needed for any integer $k>0$. Let us first prove estimate for their integrands.

\begin{lemma}\label{lemmagqest}
	Let $\xi_{\pm}(t)$ be the contour defined in \eqref{contourdef}, $\tilde{\bs r}=(\tilde r\sin\alpha\cos\beta,\tilde r\sin\alpha\sin\beta,\tilde r\cos\alpha)\in\mathbb R^3$ is any given vector. Then,
	\begin{equation}\label{gqestimate}
	\begin{split}
	&|\hat g_{q}(\xi_{\pm}(t), \eta,\phi;\tilde{\bs r})|\leq\frac{\tilde r^q|\Lambda_{\pm}(t)|^q}{q!}\Big(\frac{r^2t^2}{r^2t^2-\rho^2}\Big)^{\frac{q}{2}},\quad\forall t>1,\\
	&|\hat g_{q}(\xi_{\pm}(t), -\eta,\phi;\tilde{\bs r})|\leq\frac{\tilde r^q|\Lambda_{\pm}(t)|^q}{q!}\Big(\frac{r^2t^2}{r^2t^2-\rho^2}\Big)^{\frac{q}{2}},\quad\forall t>1,
	\end{split}
	\end{equation}
	hold for any integer $q\geq 0$.
\end{lemma}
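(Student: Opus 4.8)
The plan is to recognize $\hat g_q(\xi_{\pm}(t),\pm\eta,\phi;\tilde{\bs r})$ as the $q$-th term $(\ri\bs k\cdot\tilde{\bs r})^q/q!$ in the plane-wave expansion attached to a \emph{complex null vector} $\bs k$, and then to exploit that null structure rather than estimate the bracket in \eqref{gnkernel} head-on. Undoing the rotation that produced \eqref{gnkernel} and then substituting the complex contour value $\xi=\xi_{\pm}(t)$ (the square root branch being that of \eqref{squarerootbranch}), one checks the algebraic identity
\[
\hat g_q(\xi_{\pm}(t),\eta,\phi;\tilde{\bs r})=\frac{(\ri\bs k\cdot\tilde{\bs r})^q}{q!},\qquad \bs k=\Big(\xi_{\pm}(t)\cos\phi+\eta\sin\phi,\ \xi_{\pm}(t)\sin\phi-\eta\cos\phi,\ \ri\sqrt{\xi_{\pm}(t)^2+\eta^2}\Big),
\]
together with the analogous identity for $\hat g_q(\xi_{\pm}(t),-\eta,\phi;\tilde{\bs r})$, where $\eta$ is replaced by $-\eta$ in the first two components of $\bs k$ (the third component and $\xi_{\pm}(t)$ unchanged). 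The first two components of $\bs k$ square-sum to $\xi_{\pm}(t)^2+\eta^2$ while the third squares to $-(\xi_{\pm}(t)^2+\eta^2)$ irrespective of the branch, so $\bs k\cdot\bs k=0$, and hence $|\hat g_q(\xi_{\pm}(t),\pm\eta,\phi;\tilde{\bs r})|=|\bs k\cdot\tilde{\bs r}|^q/q!$.

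Next I would invoke the elementary fact that for a complex null vector $\bs k=\bs a+\ri\bs b$ with $\bs a,\bs b\in\mathbb R^3$, separating the real and imaginary parts of $\bs k\cdot\bs k=0$ forces $|\bs a|=|\bs b|=:\lambda$ and $\bs a\cdot\bs b=0$. Writing $\tilde{\bs r}=\tilde r\,\hat{\bs u}$ with $\hat{\bs u}$ a real unit vector, bilinearity of the dot product together with Bessel's inequality for the orthonormal pair $\lambda^{-1}\bs a,\lambda^{-1}\bs b$ (trivial if $\lambda=0$) give
\[
|\bs k\cdot\tilde{\bs r}|^2=\tilde r^2\big[(\bs a\cdot\hat{\bs u})^2+(\bs b\cdot\hat{\bs u})^2\big]\le\lambda^2\tilde r^2 ,
\]
so that $|\hat g_q(\xi_{\pm}(t),\pm\eta,\phi;\tilde{\bs r})|\le(\lambda\tilde r)^q/q!$, and it remains only to evaluate $\lambda$.

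To finish, I would compute $\lambda$ from $\lambda^2=\tfrac12(|\bs a|^2+|\bs b|^2)=\tfrac12\sum_j|k_j|^2$, using the identity $|\xi_{\pm}(t)\cos\phi+\eta\sin\phi|^2+|\xi_{\pm}(t)\sin\phi-\eta\cos\phi|^2=|\xi_{\pm}(t)|^2+\eta^2$ (valid because $\eta$ is real) and $|\sqrt{\xi_{\pm}(t)^2+\eta^2}|^2=|\xi_{\pm}(t)^2+\eta^2|$, which yield $\lambda^2=\tfrac12\big(|\xi_{\pm}(t)|^2+\eta^2+|\xi_{\pm}(t)^2+\eta^2|\big)$. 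Inserting $\xi_{\pm}(t)=\tfrac{\eta}{r}\big(\ri\rho t\pm z\sqrt{t^2-1}\big)$ and simplifying with $r^2=\rho^2+z^2$, a direct calculation gives $|\xi_{\pm}(t)|^2+\eta^2=\eta^2\big(t^2+\rho^2/r^2\big)$ and $|\xi_{\pm}(t)^2+\eta^2|=|\Lambda_{\pm}(t)|^2=\tfrac{\eta^2}{r^2}(r^2t^2-\rho^2)=\eta^2\big(t^2-\rho^2/r^2\big)$, whose half-sum is $\lambda^2=\eta^2t^2$. Hence $|\hat g_q(\xi_{\pm}(t),\pm\eta,\phi;\tilde{\bs r})|\le\tilde r^q(\eta t)^q/q!$, and since $|\Lambda_{\pm}(t)|=\tfrac{\eta}{r}\sqrt{r^2t^2-\rho^2}$ implies $|\Lambda_{\pm}(t)|^q\big(\tfrac{r^2t^2}{r^2t^2-\rho^2}\big)^{q/2}=(\eta t)^q$, this is precisely the right-hand side of \eqref{gqestimate}, for both sign choices.

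I expect the only delicate point to be the bookkeeping of the complexification: one must be sure that substituting the complex value $\xi_{\pm}(t)$ really preserves $\bs k\cdot\bs k=0$ — only a polynomial identity, but the conceptual crux — and that the two nonnegative quantities $|\xi_{\pm}(t)|^2+\eta^2$ and $|\xi_{\pm}(t)^2+\eta^2|$ average to exactly $\eta^2t^2$, a cancellation that rests entirely on $r^2=\rho^2+z^2$. No case distinction between the $+$ and $-$ branches, nor between $\pm\eta$, is needed, since only $|k_j|$ and $k_j^2$ enter the argument and both are insensitive to those signs.
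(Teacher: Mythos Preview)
Your proof is correct and takes a genuinely different, more conceptual route than the paper. The paper works directly with the bracketed expression in \eqref{gnkernel}: it decomposes $\frac{\xi_{\pm}\cos(\phi-\beta)\pm\eta\sin(\phi-\beta)}{\sqrt{\xi_{\pm}^2+\eta^2}}$ into $e^{\pm\ri(\phi-\beta)}$ pieces, computes the moduli $\big|(\xi_{\pm}\pm\ri\eta)/\sqrt{\xi_{\pm}^2+\eta^2}\big|=\sqrt{(rt\pm\rho)/(rt\mp\rho)}$, introduces phase angles $\gamma_{\pm}$, and then bounds the resulting real trigonometric combination via $(a\sin\theta+b\cos\theta)^2\le a^2+b^2$. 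Your argument bypasses all of this by observing that $\hat g_q=(\ri\bs k\cdot\tilde{\bs r})^q/q!$ with $\bs k\cdot\bs k=0$, so that $\bs k=\bs a+\ri\bs b$ has $|\bs a|=|\bs b|$ and $\bs a\perp\bs b$, and Bessel's inequality for the pair $\bs a,\bs b$ gives $|\bs k\cdot\tilde{\bs r}|\le\lambda\tilde r$ in one stroke; the remaining work is a clean computation of $\lambda^2=\tfrac12\sum_j|k_j|^2=\eta^2t^2$.

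The two arguments are morally the same inequality---the paper's trig bound is Bessel in disguise---but your null-vector framing is tidier: it needs no phase bookkeeping, treats the $\pm\eta$ and $\pm$-branch cases uniformly without separate formulas, and makes transparent why the bound is independent of the angles $\alpha,\beta,\phi$. The paper's approach, by contrast, keeps everything explicit in the original coordinates, which has the minor advantage of staying closer to \eqref{gnkernel} and making the identity $\xi_{\pm}^2+\eta^2=-\Lambda_{\pm}^2$ visibly central.
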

\begin{proof}
	Note that
	\begin{equation*}
	\begin{split}
	\frac{\xi_{\pm}(t)\cos(\phi-\beta)+\eta\sin(\phi-\beta)}{\sqrt{\xi_{\pm}(t)^2+\eta^2}}=\frac{1}{2}\Big[\frac{\xi_{\pm}(t)+\ri\eta}{\sqrt{\xi_{\pm}(t)^2+\eta^2}}e^{-\ri(\phi-\beta)}+\frac{\xi_{\pm}(t)-\ri\eta}{\sqrt{\xi_{\pm}(t)^2+\eta^2}}e^{\ri(\phi-\beta)}\Big],\\
	\frac{\xi_{\pm}(t)\cos(\phi-\beta)-\eta\sin(\phi-\beta)}{\sqrt{\xi_{\pm}(t)^2+\eta^2}}=\frac{1}{2}\Big[\frac{\xi_{\pm}(t)+\ri\eta}{\sqrt{\xi_{\pm}(t)^2+\eta^2}}e^{\ri(\phi-\beta)}+\frac{\xi_{\pm}(t)-\ri\eta}{\sqrt{\xi_{\pm}(t)^2+\eta^2}}e^{-  \ri(\phi-\beta)}\Big].
	\end{split}
	\end{equation*}
	From the definitions in \eqref{contourdef}, we have
	\begin{equation}\label{contoureq}
	\xi_{\pm}(t)^2+\eta^2=-\Lambda_{\pm}(t)^2,
	\end{equation}
	and
	\begin{equation*}
	\begin{split}
	\Big|\frac{\xi_{\pm}(t)+\ri\eta}{\sqrt{\xi_{\pm}(t)^2+\eta^2}}\Big|=\frac{1}{|\Lambda_{\pm}(t)|}\frac{\eta}{r}|\ri(\rho t+ r)\pm z\sqrt{t^2-1}|=\sqrt{\frac{rt+\rho}{rt-\rho}},\\
	\Big|\frac{\xi_{\pm}(t)-\ri\eta}{\sqrt{\xi_{\pm}(t)^2+\eta^2}}\Big|=\frac{1}{|\Lambda_{\pm}(t)|}\frac{\eta}{r}|\ri(\rho t- r)\pm z\sqrt{t^2-1}|=\sqrt{\frac{rt-\rho}{rt+\rho}}.
	\end{split}
	\end{equation*}
	Therefore, we have the following concise formulas
	\begin{equation}\label{kernelreexpression}
	\begin{split}
	\frac{\xi_{\pm}(t)\cos(\phi-\beta)+\eta\sin(\phi-\beta)}{\sqrt{\xi_{\pm}(t)^2+\eta^2}}=\frac{(rt+\rho)e^{\ri(\gamma_{\pm}-\phi+\beta)}+(rt-\rho)e^{-\ri(\gamma_{\pm}-\phi+\beta)}}{2(r^2t^2-\rho^2)},\\
	\frac{\xi_{\pm}(t)\cos(\phi-\beta)-\eta\sin(\phi-\beta)}{\sqrt{\xi_{\pm}(t)^2+\eta^2}}=\frac{(rt+\rho)e^{\ri(\gamma_{\pm}+\phi-\beta)}+(rt-\rho)e^{-\ri(\gamma_{\pm}+\phi-\beta)}}{2(r^2t^2-\rho^2)},
	\end{split}
	\end{equation}
	where $\gamma_{\pm}$ denote the phases of the complex numbers $(\xi_{\pm}(t)+\ri\eta)/\sqrt{\xi_{\pm}(t)^2+\eta^2}$, \textit{i.e.},
	\begin{equation}
	\frac{\xi_{\pm}(t)+\ri\eta}{\sqrt{\xi_{\pm}(t)^2+\eta^2}}=\sqrt{\frac{rt+\rho}{rt-\rho}}e^{\ri\gamma_{\pm}}, \quad \frac{\xi_{\pm}(t)-\ri\eta}{\sqrt{\xi_{\pm}(t)^2+\eta^2}}=\frac{\sqrt{\xi_{\pm}(t)^2+\eta^2}}{\xi_{\pm}(t)+\ri\eta}=\sqrt{\frac{rt-\rho}{rt+\rho}}e^{-\ri\gamma_{\pm}}.
	\end{equation}
	By formulations in \eqref{kernelreexpression}, we calculate that
	\begin{equation}\label{gnkernelest1}
	\begin{split}
	&\Big|\frac{\xi_{\pm}(t)\cos(\phi-\beta)+\eta\sin(\phi-\beta)}{\sqrt{\xi_{\pm}(t)^2+\eta^2}}\sin\alpha+\ri\cos\alpha\Big|^2\\
	=&\frac{1}{r^2t^2-\rho^2}\Big|\Big((rt+\rho)e^{\ri\psi_{\pm}}+(rt-\rho)e^{-\ri\psi_{\pm}}\Big)\frac{\sin\alpha}{2}+\ri\sqrt{r^2t^2-\rho^2}\cos\alpha\Big|^2\\
	=&\frac{1}{r^2t^2-\rho^2}\Big|rt\cos(\psi_{\pm})\sin\alpha+\ri \rho \sin(\psi_{\pm})\sin\alpha+\ri\sqrt{r^2t^2-\rho^2}\cos\alpha\Big|^2\\
	=&\frac{1}{r^2t^2-\rho^2}\big((\rho\sin\alpha+\sqrt{r^2t^2-\rho^2}\sin\psi_{\pm}\cos\alpha)^2+(r^2t^2-\rho^2)\cos^2\psi_{\pm}\big)\\
	\leq &\frac{1}{r^2t^2-\rho^2}\big(\rho^2+(r^2t^2-\rho^2)\sin^2\psi_{\pm}+(r^2t^2-\rho^2)\cos^2\psi_{\pm}\big)=\frac{r^2t^2}{r^2t^2-\rho^2},
	\end{split}
	\end{equation}
	where $\psi_{\pm}=\gamma_{\pm}-\phi+\beta$. The inequality is due to the fact
	\begin{equation}
	(a\sin\theta+b\cos\theta)^2=a^2+b^2-(a\cos\theta-b\sin\theta)^2
	\end{equation}
	for any $a, b$ and $\theta$ in $\mathbb R$. Similarly, the following estimate
	\begin{equation}\label{gnkernelest2}
	\Big|\frac{\xi_{\pm}(t)\cos(\phi-\beta)-\eta\sin(\phi-\beta)}{\sqrt{\xi_{\pm}(t)^2+\eta^2}}\sin\alpha+\ri\cos\alpha\Big|^2\leq \frac{r^2t^2}{r^2t^2-\rho^2},
	\end{equation}
	can also be obtained. Then, \eqref{gqestimate} follows by applying estimate \eqref{gnkernelest2} and identity \eqref{contoureq} to the definition in \eqref{gnkernel}.
	
\end{proof}
\begin{lemma}
		Suppose for any $r>\rho\geq 0$, the density function $\sigma(\zeta(t))$ has a uniform bound $|\sigma(\zeta(t))|\leq\bs M_{\sigma}$ along the contour $\Gamma$ defined in lemma \ref{contourchangelemma}. Then, the following estimates
		\begin{equation}\label{integralest}
		\begin{split}
		\mathcal F_{n\nu}^{k,\pm}(r,\phi,{\bs r}',{\bs r}'';\sigma)\leq \frac{\pi \bs M_{\sigma}}{2}\frac{|\bs r'|^n|\bs r''|^{\nu} (n+\nu)!}{r^{n+\nu+1}n!\nu!},\\
		\widetilde{\mathcal F}_{n\nu}^{k,\pm}(r,\phi,{\bs r}',{\bs r}'';\sigma)\leq \frac{\pi \bs M_{\sigma}}{2}\frac{|\bs r'|^n|\bs r''|^{\nu} (n+\nu)!}{r^{n+\nu+1}n!\nu!},
		\end{split}
		\end{equation}
		and
		\begin{equation}\label{integralestsingle}
		\mathcal E_{n}^{k,\pm}(r,\phi,{\bs r}';\sigma)\leq \frac{\pi \bs M_{\sigma}}{2}\frac{|\bs r'|^n}{r^{n+1}},\quad
		\widetilde{\mathcal E}_{n}^{k,\pm}(r,\phi,{\bs r}';\sigma)\leq \frac{\pi \bs M_{\sigma}}{2}\frac{|\bs r'|^n}{r^{n+1}}.
		\end{equation}
		hold for any integers $n, \nu\geq 0$. 		
\end{lemma}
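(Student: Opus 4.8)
The plan is to dominate each of the four integrands in \eqref{halfintegralE} by $\bs M_{\sigma}$ times an explicit monomial in $\eta,t$ multiplied by the weight $e^{-\eta rt}/\sqrt{t^{2}-1}$, and then to carry out two elementary one-dimensional integrations. Two algebraic facts do all the work. First, from the definition \eqref{contourdef} together with $r^{2}=\rho^{2}+z^{2}$ one computes
\begin{equation*}
|\Lambda_{\pm}(t)|^{2}=\frac{\eta^{2}}{r^{2}}\big(\rho^{2}(t^{2}-1)+z^{2}t^{2}\big)=\frac{\eta^{2}}{r^{2}}\,(r^{2}t^{2}-\rho^{2}),
\end{equation*}
so $|\Lambda_{\pm}(t)|=\tfrac{\eta}{r}\sqrt{r^{2}t^{2}-\rho^{2}}$; and from \eqref{contoureq}, $\zeta(\xi_{\pm}(t))^{2}=-\Lambda_{\pm}(t)^{2}$, so $|\zeta(\xi_{\pm}(t))|=|\Lambda_{\pm}(t)|$ and the factor $\Lambda_{\pm}(t)/\zeta(\xi_{\pm}(t))$ has modulus one. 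Second, inserting the formula for $|\Lambda_{\pm}(t)|$ into Lemma \ref{lemmagqest} collapses that bound to
\begin{equation*}
|\hat g_{q}(\xi_{\pm}(t),\pm\eta,\phi;\tilde{\bs r})|\ \le\ \frac{|\tilde{\bs r}|^{q}}{q!}\Big(\tfrac{\eta}{r}\sqrt{r^{2}t^{2}-\rho^{2}}\Big)^{q}\Big(\tfrac{r^{2}t^{2}}{r^{2}t^{2}-\rho^{2}}\Big)^{q/2}=\frac{(|\tilde{\bs r}|\,\eta\,t)^{q}}{q!},\qquad t>1,
\end{equation*}
valid for either sign of $\eta$.

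With these in hand I would bound the integrands. In each case the factor $\Lambda_{\pm}(t)$ occurs together with one inverse power of $\zeta(\xi_{\pm}(t))$ — explicitly so in $g_{n\nu}=\zeta^{-1}\hat g_{n}\hat g_{\nu}$, see \eqref{gnnukernel}, and through the weight $1/k_{\rho}=1/\zeta$ of $\mathcal I$ in the single-sum case, cf.\ \eqref{InInuintegrals} — and by the first fact this pairing has modulus one. Hence, using $|\sigma(\zeta(t))|\le\bs M_{\sigma}$ and the collapsed bound,
\begin{equation*}
\Big|g_{n\nu}(\xi_{\pm}(t),\pm\eta,\phi;\bs r',\bs r'')\tfrac{\Lambda_{\pm}(t)e^{-\eta rt}}{\sqrt{t^{2}-1}}\sigma(\zeta(t))\Big|\le\frac{|\bs r'|^{n}|\bs r''|^{\nu}}{n!\,\nu!}\,\bs M_{\sigma}\,\frac{\eta^{n+\nu}t^{n+\nu}e^{-\eta rt}}{\sqrt{t^{2}-1}},
\end{equation*}
and the corresponding $\mathcal E_{n}^{k,\pm}$-integrand is bounded by $\tfrac{|\bs r'|^{n}}{n!}\bs M_{\sigma}\tfrac{\eta^{n}t^{n}e^{-\eta rt}}{\sqrt{t^{2}-1}}$. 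Since the integrands are nonnegative I would enlarge $\int_{1/k}^{\infty}d\eta$ to $\int_{0}^{\infty}d\eta$ — which also renders the estimate uniform in $k$ — and invoke Tonelli's theorem to integrate in $\eta$ first: with $u=\eta rt$ one gets $\int_{0}^{\infty}\eta^{q}e^{-\eta rt}\,d\eta=q!\,(rt)^{-q-1}$, and then $\int_{1}^{\infty}t^{q}(rt)^{-q-1}(t^{2}-1)^{-1/2}dt=r^{-q-1}\int_{1}^{\infty}\big(t\sqrt{t^{2}-1}\big)^{-1}dt=\tfrac{\pi}{2}\,r^{-q-1}$ (via $t=\sec\theta$). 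Taking $q=n+\nu$ yields \eqref{integralest} and taking $q=n$ yields \eqref{integralestsingle}; the $\widetilde{\mathcal E}^{k,\pm}$ and $\widetilde{\mathcal F}^{k,\pm}$ estimates follow verbatim from the $-\eta$ half of Lemma \ref{lemmagqest}.

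Everything past those two algebraic facts is routine bookkeeping; the step I expect to be the crux — and which I would present most carefully — is the exact cancellation, in modulus, of the Cagniard--de Hoop Jacobian $\Lambda_{\pm}(t)$ against the inverse weight $\zeta(\xi_{\pm}(t))^{-1}$, so that no extra power of $\eta t$ survives to spoil the clean $r^{-(n+\nu+1)}$ decay. This rests on the identity $\zeta(\xi_{\pm}(t))^{2}=-\Lambda_{\pm}(t)^{2}$ from \eqref{contoureq}, together with the algebraic simplification of Lemma \ref{lemmagqest} that uses $r^{2}=\rho^{2}+z^{2}$ to reduce $|\hat g_{q}(\xi_{\pm}(t),\pm\eta,\phi;\tilde{\bs r})|$ to $(|\tilde{\bs r}|\,\eta\,t)^{q}/q!$.
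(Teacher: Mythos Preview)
Your proof is correct and follows essentially the same route as the paper's: both use Lemma~\ref{lemmagqest} together with $|\Lambda_{\pm}(t)|=\tfrac{\eta}{r}\sqrt{r^2t^2-\rho^2}$ and the identity $|\zeta(\xi_{\pm}(t))|=|\Lambda_{\pm}(t)|$ from \eqref{contoureq} to collapse the integrand to $\bs M_{\sigma}\,(\eta t)^{q}\,e^{-\eta rt}/\sqrt{t^2-1}$ (up to the $|\bs r'|^n|\bs r''|^{\nu}/(n!\,\nu!)$ prefactor), and then evaluate $\int_{0}^{\infty}\eta^{q}e^{-\eta rt}d\eta=q!(rt)^{-q-1}$ and $\int_{1}^{\infty}(t\sqrt{t^2-1})^{-1}dt=\pi/2$. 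Your write-up is in fact a bit more explicit than the paper's, which only shows the $\mathcal F^{k,\pm}_{n\nu}$ case and leaves the cancellation $|\Lambda_{\pm}(t)/\zeta(\xi_{\pm}(t))|=1$ implicit; you also correctly track the $1/\zeta$ weight needed in the $\mathcal E_n^{k,\pm}$ case.
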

\begin{proof}	
	By \eqref{gnnukernel}, \eqref{contourdef}, identity \eqref{contoureq} and estimates in lemma \ref{lemmagqest}, we have
	\begin{equation*}
	\big|g_{n\nu}(\xi_{\pm}(t),\pm\eta,\phi;{\bs r}',{\bs r}'')\Lambda_{\pm}(t)\big|\leq \frac{|\bs r'|^n|\bs r''|^{\nu}}{n!\nu!}\left(\frac{|\Lambda_{\pm}(t)|rt}{\sqrt{r^2t^2-\rho^2}}\right)^{n+\nu}=\frac{|\bs r'|^n|\bs r''|^{\nu}}{n!\nu!}(\eta t)^{n+\nu}.
	\end{equation*}
	With the above estimates and the bound of $\sigma(\zeta(t))$, we derive from expressions \eqref{halfintegralE} that
	\begin{equation}
	\begin{split}
	\mathcal F^{k,\pm}_{n\nu}&(r,\phi,{\bs r}',{\bs r}'';\sigma)\leq
	\frac{\bs M_{\sigma}|\bs r'|^n|\bs r''|^{\nu}}{n!\nu!}\int_{1}^{\infty}\frac{t^{n+\nu}}{\sqrt{t^2-1}}\int_{0}^{\infty}\eta^{n+\nu}e^{-\eta rt}dt d\eta\\
	&=\frac{\bs M_{\sigma}|\bs r'|^n|\bs r''|^{\nu}(n+\nu)!}{r^{n+\nu+1}n!\nu!}\int_1^{\infty}\frac{1}{t\sqrt{t^2-1}}dt=\frac{\pi \bs M_{\sigma}}{2}\frac{|\bs r'|^n|\bs r''|^{\nu} (n+\nu)!}{r^{n+\nu+1}n!\nu!},
	\end{split}
	\end{equation}
	for any integers $n,\nu\geq 0$. The other estimates in \eqref{integralest} and \eqref{integralestsingle} can be proved similarly.
\end{proof}

\begin{theorem}\label{Th6}
	Suppose $|\bs r|>|\bs r'|+|\bs r''|$, $z>0$, $z+z'+z''>0$, and the density function $\sigma(\zeta)$ is analytic and has a bound $|\sigma(\zeta)|\leq \bs M_{\sigma}$ in the right half complex plane. Then,
	\begin{equation}\label{convergenceFn}
	\begin{split}
	{\mathcal F}(\bs r,{\bs r}',{\bs r}'';\sigma)=\sum\limits_{n=0}^{\infty}\sum\limits_{\nu=0}^{\infty}{\mathcal F}_{n\nu}(\bs r, \bs r', \bs r'',\sigma),\\
	\widetilde{\mathcal F}(\bs r,{\bs r}',{\bs r}'';\sigma)=\sum\limits_{n=0}^{\infty}\sum\limits_{\nu=0}^{\infty}\widetilde{\mathcal F}_{n\nu}(\bs r, \bs r', \bs r'',\sigma),
	\end{split}
	\end{equation}
	where the integrals are defined in \eqref{EFdef} and \eqref{EnFnnudeflimit}.
\end{theorem}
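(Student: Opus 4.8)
The plan is to establish the two identities in \eqref{convergenceFn} by a two-stage interchange-of-limits argument, carried out identically for $\mathcal F$ and for $\widetilde{\mathcal F}$. In the first stage, for each fixed $k$ I would exchange the double summation $\sum_{n,\nu}$ with the double improper integral $\int_{1/k}^{\infty}\int_{1}^{\infty}$ in the contour-deformed representation \eqref{integraldecomposition2-2}, so as to get $\mathcal F^k(\bs r,\bs r',\bs r'';\sigma)=\sum_{n,\nu}\mathcal F_{n\nu}^k(\bs r,\bs r',\bs r'';\sigma)$, and likewise $\widetilde{\mathcal F}^k=\sum_{n,\nu}\widetilde{\mathcal F}_{n\nu}^k$, where the $\mathcal F_{n\nu}^k$ are the contour-deformed pieces of \eqref{EnFnnudefcontourdeformed2}. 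In the second stage I would pass to the limit $k\to\infty$ term by term. Both steps are powered by the single majorant furnished by the estimates \eqref{integralest}, which is why the geometric hypothesis $|\bs r|>|\bs r'|+|\bs r''|$ enters.

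For the first stage I would invoke Fubini--Tonelli for the product of counting measure on $(n,\nu)$ and the measure $\frac{e^{-\eta rt}}{\sqrt{t^2-1}}\,dt\,d\eta$ on $(1,\infty)\times(1/k,\infty)$. Splitting $h_{n\nu}(t,\pm\eta,\phi;\bs r',\bs r'')$ into its $\xi_+$ and $\xi_-$ contributions as in \eqref{newdeformedkernels}, the total mass of the integrand-series is dominated by $\sum_{n,\nu}\big(\mathcal F_{n\nu}^{k,+}+\mathcal F_{n\nu}^{k,-}\big)$, and \eqref{integralest} together with the binomial identity
\[
\sum_{n=0}^{\infty}\sum_{\nu=0}^{\infty}\binom{n+\nu}{n}x^{n}y^{\nu}=\frac{1}{1-x-y},\qquad x+y<1,
\]
applied with $x=|\bs r'|/r$, $y=|\bs r''|/r$ (noting $r=|\bs r|$), gives
\[
\sum_{n=0}^{\infty}\sum_{\nu=0}^{\infty}\big(\mathcal F_{n\nu}^{k,+}+\mathcal F_{n\nu}^{k,-}\big)\le \pi\bs M_{\sigma}\sum_{n=0}^{\infty}\sum_{\nu=0}^{\infty}\frac{|\bs r'|^{n}|\bs r''|^{\nu}(n+\nu)!}{r^{n+\nu+1}\,n!\,\nu!}=\frac{\pi\bs M_{\sigma}}{r-|\bs r'|-|\bs r''|}<\infty.
\]
The hypothesis $|\bs r|>|\bs r'|+|\bs r''|$ is precisely what guarantees $x+y<1$, so the bound is finite and, crucially, independent of $k$. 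Hence the series of integrals converges absolutely and the interchange is legitimate, yielding $\mathcal F^{k}=\sum_{n,\nu}\mathcal F_{n\nu}^{k}$ and $\widetilde{\mathcal F}^{k}=\sum_{n,\nu}\widetilde{\mathcal F}_{n\nu}^{k}$.

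For the second stage, the same estimate \eqref{integralest} gives $|\mathcal F_{n\nu}^{k}|\le \mathcal F_{n\nu}^{k,+}+\mathcal F_{n\nu}^{k,-}\le \pi\bs M_{\sigma}\,|\bs r'|^{n}|\bs r''|^{\nu}(n+\nu)!\,/\,(r^{n+\nu+1}n!\nu!)$, a Weierstrass-type majorant independent of $k$ with finite sum over $(n,\nu)$. Since $\mathcal F_{n\nu}^{k}\to\mathcal F_{n\nu}$ as $k\to\infty$ by \eqref{EnFnnudeflimit}, dominated convergence for the counting measure permits interchanging $\lim_{k\to\infty}$ with $\sum_{n,\nu}$, so that, using \eqref{EFdef},
\[
\mathcal F=\lim_{k\to\infty}\mathcal F^{k}=\lim_{k\to\infty}\sum_{n,\nu}\mathcal F_{n\nu}^{k}=\sum_{n,\nu}\lim_{k\to\infty}\mathcal F_{n\nu}^{k}=\sum_{n,\nu}\mathcal F_{n\nu},
\]
and the identical chain for $\widetilde{\mathcal F}^{k}$ and $\widetilde{\mathcal F}_{n\nu}^{k}$ proves \eqref{convergenceFn}.

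I expect no serious computational obstacle: once \eqref{integralest} is available the argument reduces to a double application of dominated convergence, and the one genuinely analytic ingredient — that $\sigma(\zeta(t))$ stays bounded by $\bs M_{\sigma}$ along every contour $\Gamma$ because $\zeta(\xi_{\pm}(t))$ lies in the right half-plane — has already been secured in the lemmas of Step 2 under $z>0$ and $z+z'+z''>0$. The point requiring care is the bookkeeping that keeps every majorant uniform in $k$, and the conceptual fact that the geometric condition $|\bs r|>|\bs r'|+|\bs r''|$ is exactly what makes the combinatorial double series close; this is the step tying the exponential decay of the Cagniard--de Hoop integrals to the admissible source--target configurations.
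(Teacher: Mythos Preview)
Your proposal is correct and follows essentially the same route as the paper: use the majorant \eqref{integralest} together with the binomial summation to apply Fubini--Tonelli at fixed $k$, obtaining $\mathcal F^{k}=\sum_{n,\nu}\mathcal F_{n\nu}^{k}$, and then use the $k$-uniformity of that same majorant to interchange $\lim_{k\to\infty}$ with the double sum. Your write-up is slightly more explicit than the paper's (naming dominated convergence for the counting measure and spelling out the binomial identity), but the argument is the same.
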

\begin{proof}	
	We only present the proof for the first summation in \eqref{convergenceFn}. Similar analysis can be done for the second one.

	By the estimate \eqref{integralest}, we have
	\begin{equation}\label{Fnnusum}
	\begin{split}
	\sum\limits_{n=0}^{\infty}\sum\limits_{\nu=0}^{\infty}\mathcal F^{k,\pm}_{n\nu}(r,\phi,{\bs r}',{\bs r}'';\sigma)\leq&\frac{\pi \bs M_{\sigma}}{2} \sum\limits_{n=0}^{\infty}\sum\limits_{\nu=0}^{\infty}\frac{|\bs r'|^n|\bs r''|^{\nu} (n+\nu)!}{r^{n+\nu+1}n!\nu!}\\
	=&\frac{\pi \bs M_{\sigma}}{2r}\sum\limits_{n=0}^{\infty}\Big(\frac{|\bs r'|+|\bs r''|}{r}\Big)^n=\frac{\pi \bs M_{\sigma}}{2(r-|\bs r'|-|\bs r''|)},
	\end{split}
	\end{equation}
	for any $r>|\bs r'|+|\bs r''|$. Therefore, we can apply the Fubini theorem to exchange the order of the improper integrals and infinite summations in \eqref{integraldecomposition2-2}. Together with the expressions in \eqref{EnFnnudefcontourdeformed2}, we obtain
	\begin{equation}\label{Fnnusum2}
	\begin{split}
	{\mathcal F}^{k}(\bs r,{\bs r}',{\bs r}'';\sigma)=\sum\limits_{n=0}^{\infty}\sum\limits_{\nu=0}^{\infty}{\mathcal F}_{n\nu}^k(\bs r, \bs r', \bs r'',\sigma).
	\end{split}
	\end{equation}
	Note that \eqref{Fnnusum} holds uniformly with respect to parameter $k$. Therefore, the series in \eqref{Fnnusum2} is also uniform convergent with respect to parameter $k$. Taking limit for $k\rightarrow\infty$ in \eqref{Fnnusum2} and exchanging order of the limit and summations, we obtain the first equality in \eqref{convergenceFn}.
\end{proof}
By following the same analysis above, we have similar conclusions for the simpler cases.
\begin{theorem}\label{Th7}
	Suppose $|\bs r|>|\bs r'|$, $z>0$, $z+z'>0$, and the density function $\sigma(\zeta)$ is analytic and has a bound $|\sigma(\zeta)|\leq \bs M_{\sigma}$ in the right half complex plane. Then,
	\begin{equation}\label{convergenceEn}
	\begin{split}
	{\mathcal E}(\bs r,{\bs r}';\sigma)=\sum\limits_{n=0}^{\infty}\mathcal E_{n}(\bs r, \bs r',\sigma),\quad
	\widetilde{\mathcal E}(\bs r,{\bs r}';\sigma)=\sum\limits_{n=0}^{\infty}\widetilde{\mathcal E}_{n}(\bs r, \bs r', \sigma),
	\end{split}
	\end{equation}
	where the integrals are defined in \eqref{EFdef} and \eqref{EnFnnudeflimit}.
\end{theorem}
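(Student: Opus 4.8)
The plan is to run, \emph{mutatis mutandis}, the same three-step argument that established Theorem \ref{Th6}, with the double-index estimate \eqref{integralest} replaced throughout by the single-index estimate \eqref{integralestsingle}. The hypotheses $z>0$ and $z+z'>0$ are precisely what is needed to invoke the contour-deformed representations \eqref{integraldecomposition2-1}: for every $k\ge 1$,
\begin{equation*}
\mathcal E^{k}(\bs r,\bs r';\sigma)=\ri\int_{1/k}^{\infty}\int_{1}^{\infty}\sum_{n=0}^{\infty}\hat h_{n}(t,\eta,\phi;\bs r')\,\sigma(\zeta(t))\,\frac{e^{-\eta rt}}{\sqrt{t^2-1}}\,dt\,d\eta,
\end{equation*}
and the analogous formula for $\widetilde{\mathcal E}^{k}$ with $\eta$ replaced by $-\eta$ inside $\hat h_n$. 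Along the contour $\Gamma$ the density is uniformly bounded, $|\sigma(\zeta(t))|\le\bs M_\sigma$, since $\mathfrak{Re}\,\zeta(\xi)>0$ on $\Omega_{\Gamma}^{+}$ and $\sigma$ is bounded on the right half plane; also $r=\sqrt{\rho^2+z^2}=|\bs r|$, so the hypothesis $|\bs r|>|\bs r'|$ reads $r>|\bs r'|$.

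First I would carry out the Fubini step for fixed $k$. Splitting $\hat h_n$ into its $\xi_{+}$ and $\xi_{-}$ pieces via \eqref{newdeformedkernels} and summing the estimate \eqref{integralestsingle} over $n\ge 0$ gives, for $r>|\bs r'|$,
\begin{equation*}
\sum_{n=0}^{\infty}\mathcal E_{n}^{k,\pm}(r,\phi,\bs r';\sigma)\le\frac{\pi\bs M_\sigma}{2r}\sum_{n=0}^{\infty}\Big(\frac{|\bs r'|}{r}\Big)^{n}=\frac{\pi\bs M_\sigma}{2\,(r-|\bs r'|)}<\infty,
\end{equation*}
with the same bound for $\widetilde{\mathcal E}_{n}^{k,\pm}$. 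This furnishes a summable majorant for the integrand-summands $\hat h_n(t,\pm\eta,\phi;\bs r')\sigma(\zeta(t))e^{-\eta rt}/\sqrt{t^2-1}$, so the Fubini/Tonelli theorem permits exchanging the order of the integral $\int_{1/k}^{\infty}\int_{1}^{\infty}$ and the sum over $n$. Combining this with the single-term representations \eqref{EnFnnudefcontourdeformed1} yields
\begin{equation*}
\mathcal E^{k}(\bs r,\bs r';\sigma)=\sum_{n=0}^{\infty}\mathcal E_{n}^{k}(\bs r,\bs r',\sigma),\qquad \widetilde{\mathcal E}^{k}(\bs r,\bs r';\sigma)=\sum_{n=0}^{\infty}\widetilde{\mathcal E}_{n}^{k}(\bs r,\bs r',\sigma).
\end{equation*}

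Finally I would let $k\to\infty$. The majorant $\tfrac{\pi\bs M_\sigma}{2(r-|\bs r'|)}$ above is independent of $k$, so both series converge uniformly in $k$, and I may interchange $\lim_{k\to\infty}$ with the summation; invoking the definitions \eqref{EFdef} and \eqref{EnFnnudeflimit} of $\mathcal E,\widetilde{\mathcal E},\mathcal E_n,\widetilde{\mathcal E}_n$ as the corresponding $k\to\infty$ limits then gives \eqref{convergenceEn}. Together with \eqref{InInnuExplimit} and \eqref{InnuEnFnu} this produces the first identity in \eqref{finaltwointegralexp}, and tracking the tail of the geometric series above gives the truncation bound \eqref{reactionintegralestimate1}. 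There is no genuinely new difficulty relative to Theorem \ref{Th6}; the only point requiring care is that the dominating bound be $k$-free, which is immediate because the estimate \eqref{integralestsingle} does not depend on $k$.
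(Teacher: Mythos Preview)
Your proposal is correct and mirrors exactly what the paper intends: it states only that Theorem \ref{Th7} follows ``by following the same analysis above'' (i.e., the proof of Theorem \ref{Th6}), and your write-up fills in precisely those details, replacing the double-index estimate \eqref{integralest} by the single-index estimate \eqref{integralestsingle} and otherwise repeating the Fubini step for fixed $k$ followed by the $k$-uniform passage to the limit.
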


Summing up expansions in \eqref{convergenceFn} and \eqref{convergenceEn}, respectively, and recalling identities \eqref{InnuEnFnu} and \eqref{InInnuExplimit}, we complete the proof for \eqref{finaltwointegralexp}.

Next, let us prove the truncation error estimate \eqref{reactionintegralestimate2}. Another error estimate \eqref{reactionintegralestimate1} can be proved similarly. By the definition \eqref{EnFnnudef}, lemma \ref{contourchangelemma} and estimates \eqref{integralest}, we have
\begin{equation}
\begin{split}
|{\mathcal F}_{n\nu}(\bs r, \bs r', \bs r'',\sigma)|&\leq \lim\limits_{k\rightarrow\infty}{\mathcal F}^{k,+}_{n\nu}(|\bs r|,\phi,{\bs r}',{\bs r}'';\sigma)+\lim\limits_{k\rightarrow\infty}{\mathcal F}^{k,-}_{n\nu}(|\bs r|,\phi,{\bs r}',{\bs r}'';\sigma)\\
&\leq \pi \bs M_{\sigma}\frac{|\bs r'|^n|\bs r''|^{\nu} (n+\nu)!}{|\bs r|^{n+\nu+1}n!\nu!},\\
|\widetilde{\mathcal F}_{n\nu}(\bs r, \bs r', \bs r'',\sigma)|&\leq \lim\limits_{k\rightarrow\infty}\widetilde{\mathcal F}^{k,+}_{n\nu}(|\bs r|,\phi,{\bs r}',{\bs r}'';\sigma)+\lim\limits_{k\rightarrow\infty}\widetilde{\mathcal F}^{k,-}_{n\nu}(|\bs r|,\phi,{\bs r}',{\bs r}'';\sigma)\\
&\leq \pi \bs M_{\sigma}\frac{|\bs r'|^n|\bs r''|^{\nu} (n+\nu)!}{|\bs r|^{n+\nu+1}n!\nu!}.
\end{split}
\end{equation}
Therefore
\begin{equation}
|\mathcal I_{n\nu}(\bs r, \bs r', \bs r'';\sigma)|=|{\mathcal F}_{n\nu}(\bs r, \bs r', \bs r'',\sigma)+\widetilde{\mathcal F}_{n\nu}(\bs r, \bs r', \bs r'',\sigma)|\leq 2\pi \bs M_{\sigma}\frac{|\bs r'|^n|\bs r''|^{\nu} (n+\nu)!}{|\bs r|^{n+\nu+1}n!\nu!},
\end{equation}
and
\begin{equation}
\begin{split}
&\Big|\mathcal I(\bs r, \bs r', \bs r'';\sigma)-\sum\limits_{n=0}^{p}\sum\limits_{\nu=0}^{p}\mathcal I_{n\nu}(\bs r, \bs r', \bs r'';\sigma)\Big|\\
\leq&2\pi \bs M_{\sigma} \left[\sum\limits_{n=0}^{p}\sum\limits_{\nu=p+1}^{\infty}\frac{|\bs r'|^n|\bs r''|^{\nu} (n+\nu)!}{|\bs r|^{n+\nu+1}n!\nu!}+\sum\limits_{n=p+1}^{\infty}\sum\limits_{\nu=0}^{\infty}\frac{|\bs r'|^n|\bs r''|^{\nu} (n+\nu)!}{|\bs r|^{n+\nu+1}n!\nu!}\right]\\
\leq&\frac{4\pi \bs M_{\sigma}}{|\bs r|}\sum\limits_{n=p+1}^{\infty}\Big(\frac{|\bs r'|+|\bs r''|}{|\bs r|}\Big)^n	=\frac{4\pi \bs M_{\sigma}}{|\bs r|-|\bs r'|-|\bs r''|} \Big(\frac{|\bs r'|+|\bs r''|}{|\bs r|}\Big)^{p+1}.
\end{split}
\end{equation}

\section{Conclusion}

In this paper, we have shown that the reaction density functions involved in the Green's function of 3-dimensional Laplace equation in multi-layered media are analytic and bounded in the right half complex plane. Based on this theoretical result, we are able to show that  the ME and LE and M2L, M2M, and L2L translation operators for the Green's functions of a 3-dimensional Laplace equation in layered media have exponential convergence similar to the classic FMM for free space problem.

The detailed analysis and estimates done here for the 3-D Laplace equation in layered media will allow us to tackle more challenging tasks in establishing the mathematical foundation for the FMMs we developed for the 3-D Poisson-Boltzmann and Helmholtz equations, and moreover, the Maxwell's equations.
%This work paves the way for the analysis of the FMM for 3-dimensional Poisson-Boltzmann, Helmholtz, and even Maxwell equations in layered media.
As an immediate future work, we will carry out the error estimate for the FMMs for the 3-dimensional Helmholtz equation in layered media, which will require new techniques to address the effect of the surface waves (poles of density function close to the real axis) on the exponential convergence property of the MEs and LEs and M2L translation operators.

\section*{Acknowledgement}

%This work was supported by US Army Research Office (Grant No. W911NF-17-1-0368).
The research of the first author is partially supported by NSFC (grant 11771137), the Construct Program of the Key Discipline in Hunan Province and a Scientific Research Fund of Hunan Provincial Education Department (No. 16B154).

\bibliographystyle{plain}

%\bibliography{../../../../../refsphere}

\end{document}